\numberwithin{equation}{section}
\newcommand{\R}{\mathbb{R}}
\newcommand{\Q}{\mathbb{Q}}
\newcommand{\N}{\mathbb{N}}
\newcommand{\CC}{\mathscr{C}}
\newcommand{\DD}{\mathscr{D}}
\newcommand{\eps}{\varepsilon}
\newcommand{\dist}{\mathrm{dist}}
\newcommand{\diam}{\text{diam}}
\newcommand{\loc}{\text{loc}}
\newcommand{\pv}{\mathrm{p.v.}\!}
\newcommand{\Ds}{{\left(-\lapl\right)}^s}
\newcommand{\lapl}{\Delta}
\newcommand{\cH}{{\mathcal H}}
\newtheorem{theorem}{Theorem}[section]
\newtheorem{lemma}[theorem]{Lemma}
\newtheorem{proposition}[theorem]{Proposition}
\theoremstyle{remark}
\theoremstyle{definition}
\title[Optimal boundary regularity for mixed local and nonlocal equations]{Optimal boundary regularity\\ for mixed local and nonlocal equations}
\date{\today}
\author{Nicola Abatangelo}
\address{(N. Abatangelo) Dipartimento di Matematica, Alma Mater Studiorum Università di Bologna, P.zza di Porta S. Donato 5, 40126 Bologna, Italy.}
\email{nicola.abatangelo@unibo.it}
\author{Elisa Affili}
\address{(E. Affili)
	Univ Rouen Normandie, CNRS, Normandie Univ, LMRS UMR 6085, F-76000, Rouen,
	France.}
\email{elisa.affili@univ-rouen.fr}
\author{Matteo Cozzi}
\address{(M. Cozzi) Dipartimento di Matematica ``Federigo Enriques'', Università degli Studi di Milano, via Saldini 50, 20133 Milan, Italy.}
\email{matteo.cozzi@unimi.it}
\thanks{\textit{Keywords}: elliptic regularity, mixed operators, weighted H\"older spaces, boundary value problems, nonlocal operators, fractional Laplacian.}
\thanks{\textit{MSC2020}: 35B65, 35R11, 35J25, 35J05, 35B50.}
\begin{document}

\begin{abstract}
We provide sharp boundary regularity estimates for solutions to elliptic equations
driven by an integro-differential operator obtained as 
the sum of a Laplacian with a nonlocal operator
generalizing a fractional Laplacian.

Our approach makes use of weighted H\"older spaces 
as well as regularity estimates for the Laplacian in this context
and a fixed-point argument.

We show the optimality of the obtained estimates by means of a counterexample
that we have striven to keep as explicit as possible.
\end{abstract}

\maketitle

\tableofcontents

\section{Introduction}

Let~$n \in \N$ and~$\Omega \subset \R^n$ be a bounded open set with Lipschitz boundary.
We are interested in the boundary regularity of equations of the form
\begin{align}\label{eq}
p \, (- \Delta) u 
+ q \, L_k u 
+ g\cdot Du = f
\qquad\text{in }\Omega,
\end{align}
where~$p,q,f:\Omega\to\R$, $g:\Omega\to\R^n$ are measurable functions 
and~$L_k$ stands for the integro-differential nonlocal operator
\begin{align}\label{Ls}
L_k u(x) \coloneqq \pv\int_{\R^n} \big({u(x)-u(x+z)}\big) k(z) \, dz = \lim_{\varepsilon \rightarrow 0^+}\int_{\R^n \setminus B_\varepsilon} \big({u(x)-u(x+z)}\big) k(z) \, dz
\end{align}
determined by a measurable kernel~$k: \R^n \to \R$ satisfying 
\begin{equation}\label{k}
k(z) = k(-z) \quad \mbox{and} \quad
\frac{\kappa_1}{|z|^{n + 2 s}} \leq k(z) \leq \frac{\kappa_2}{|z|^{n + 2 s}}
\quad \mbox{for a.e.~} z \in \R^n,
\end{equation}
for some exponent~$s \in (0, 1)$ and some constants~$\kappa_2 \ge \kappa_1 > 0$. The presence of the nonlocal term makes a (homogeneous) Dirichlet problem 
associated to~\eqref{eq} look like
\begin{align}\label{dir}
\left\lbrace\begin{aligned}
p \, (- \Delta) u + q \, L_k u + g\cdot Du &= f
&& \text{in }\Omega, \\
u &= 0 && \text{on }\partial\Omega, \\
u &= 0 && \text{in }\R^n\setminus\overline{\Omega}.
\end{aligned}\right.
\end{align}
In fact, the prescription of the values attained by the solution~$u$ in~$\R^n\setminus\overline\Omega$ is of the utmost importance to make sense of the definition of~$L_k u$
in~\eqref{Ls}, whilst the prescribed values on~$\partial\Omega$ are immaterial for~$L_k$
as it does not see negligible sets, being an integral operator. 
Conversely, the boundary conditions on~$\partial\Omega$ 
are somewhat required for the uniqueness of a solution by the term~$-\Delta u$,
which in turn is not affected by the conditions on~$\R^n\setminus\overline\Omega$.
A more detailed analysis of these interactions has been carried out by two of the authors in~\cite{AC21},
although here we are only interested in the homogeneous boundary values as in~\eqref{dir}. 
It is however important to underline that the boundary behavior of a solution to~\eqref{dir}
is strongly affected by the double nature of the left-hand side of~\eqref{eq},
\textit{i.e.}, by the presence of both a Laplacian and a nonlocal operator.

Indeed, the equation in~\eqref{eq} falls into the category of mixed local-nonlocal equations,
which are equations driven by an operator obtained by superposing a local differential operator
with a nonlocal integro-differential one. 
The most canonical example of an operator of this sort is obtained
by considering the kernel~$k$ in~\eqref{Ls} and~\eqref{k} to be given by~$k(z) = |z|^{- n - 2 s}$,
in which case~$L_k$ reduces to (a multiple of) the fractional Laplacian~$\Ds$,
for which we refer to~\cites{MR3967804,libbro,MR3469920,MR2944369,MR3916700}.
In this spirit, one of the simplest examples of mixed local-nonlocal equation, 
which is also covered by the structure of~\eqref{dir}, is given by
\begin{align}\label{eq-simple}
\left\lbrace\begin{aligned}
-\Delta u+\Ds u &= f && \text{in }\Omega, \\
u &= 0 && \text{on }\partial\Omega, \\
u &= 0 && \text{in }\R^n\setminus\overline{\Omega}.,
\end{aligned}\right.
\end{align}
which has lately received a great deal of attention.
Without claiming to be exhaustive, we give below a brief account of the known regularity results
on~\eqref{eq-simple} which are more closely related to our scopes. 

From the broad results by Barles, Chasseigne, and Imbert \cite{MR2735074},
which are actually operating in a fully nonlinear setting,
it is possible to deduce global~$C^{0,\alpha}$ regularity of solutions.
Biagi, Dipierro, Vecchi, and Valdinoci \cite{BDVV22} laid out the foundations of the elliptic theory
for~\eqref{eq-simple}, providing existence and uniqueness of weak and classical solutions, maximum principles, interior Sobolev regularity, and Lipschitz regularity up to the boundary: in some more detail, they showed (see~\cite{BDVV22}*{Theorem 1.6}) that the (unique) solution to~\eqref{eq-simple} satisfies the estimate
\[
\|u\|_{C^{0,1}(\overline\Omega)}\leq C\|f\|_{L^\infty(\Omega)}
\qquad\text{if $f$ is smooth enough and $\Omega$ is strictly convex.}
\]
The same authors in~\cite{BDVV23}*{Theorem 2.7} improved the boundary regularity in~$C^{1,1}$ bounded
domains, showing that
\begin{align}\label{c1beta}
\text{if $f\in L^\infty(\Omega)$, then there exists $\beta\in(0,1)$ such that }u\in C^{1,\beta}(\overline\Omega).
\end{align}
Additionally, \cite{BDVV23}*{Theorem B.1} gives that
\begin{align}\label{c2beta}
\text{if $s\in\Big(0,\frac12\Big)$, $\beta\in(0,1-2s)$, and $f\in C^\beta(\overline\Omega)$,
then $u\in C^{2,\beta}(\overline\Omega)$}.
\end{align}
Biswas, Modasiya, and Sen~\cite{BMS23}*{Theorem 1.3} generalized~\eqref{c1beta}
to a larger context, replacing the fractional Laplacian~${(-\Delta)}^s$
with a nonlocal term~$L_k$
(handling a class of operators which strictly contains
the one we are considering with~\eqref{Ls} and~\eqref{k}) 
and allowing for gradient terms in the equation.
In a series of works, Su, Valdinoci, Wei, and Zhang \cites{SVWZ22,SVWZ23,SVWZ25}
looked at some semilinear counterpart of~\eqref{eq-simple},
reaching the following results in the linear case:
by~\cite{SVWZ25}*{Theorem 1.3} one has
\begin{align}\label{c1beta-exp}
\text{if $f\in L^\infty(\Omega)$, then $u\in C^{1,\beta}(\overline\Omega)$ for every $\beta\in(0,\min\{1,2-2s\})$},
\end{align}
while, by~\cite{SVWZ25}*{Theorem 1.6},
\begin{align}\label{c2beta-crit}
\text{if $s\in\Big(0,\frac12\Big)$, $\beta\in(0,1-2s]$, and $f\in C^\beta(\overline\Omega)$,
then $u\in C^{2,\beta}(\overline\Omega)$}.
\end{align}
In this way,~\eqref{c1beta-exp} improves~\eqref{c1beta} by quantifying the regularity exponent~$\beta$
and~\eqref{c2beta-crit} improves~\eqref{c2beta} by including the case~$\beta=1-2s$.

At a first glance, the above results might sound disappointing, especially in the case~$s>1/2$
when they are far from the full Schauder regularity, not even reaching~$C^2$ up to the boundary:
this might look inconsistent with the fact that equations~\eqref{eq} and~\eqref{eq-simple}
are led by the Laplacian to leading order. 
Although this intuition is heuristically correct far from~$\partial\Omega$, 
things start to get more involved at the boundary as an effect of the nonlocal part,
which somehow detects any lack of smoothness of the solutions \textit{across} the boundary
and consequently reproduces this into the equations.
Indeed, solutions cannot be expected to be better than Lipschitz across the boundary,
owing to a Hopf-type behavior.

So, the boundary regularity of solutions to~\eqref{eq} and~\eqref{eq-simple}
is where it is truly possible to witness a deep interaction
between the local and the nonlocal components of the equation,
and even some sort of competition between the two:
on the one hand, the Laplacian is pushing for a regularization of the solutions,
while, on the other hand,~$L_s$ is preventing them from becoming too regular.

\subsection{Main results}

We show in this paper that the above available boundary regularity results 
are not far from being optimal. Our first result is stated as follows.

\begin{theorem} \label{mainlinearexthmm}
Let~$\Omega \subset \R^n$ be a bounded open set with boundary of class~$C^{2, \alpha}$, for some~$\alpha \in (0, 1)$. Let~$k$ be a kernel satisfying~\eqref{k}, for some~$s \in (0, 1)$ and~$\kappa_2 \ge \kappa_1 > 0$. Let~$p, q \in C^\alpha(\overline\Omega)$ be two non-negative functions, with~$p$ satisfying~$\inf_\Omega p > 0$. Let~$f,g \in C^\alpha(\overline\Omega)$.

Then, problem~\eqref{dir} has a unique solution~$u \in C^2(\Omega) \cap C^0(\R^n)$. Moreover,~$u$ has the following global regularity properties:
\begin{itemize}[leftmargin=*]
\item If~$s \in (0,\frac12)$, then~$u \in C^{2,\beta}(\overline\Omega)$ and it satisfies~$\| u \|_{C^{2,\beta}(\overline\Omega)} \le C \| f \|_{C^\alpha(\overline\Omega)}$, with~$\beta=\min\{\alpha,1-2s\}$.
\item If~$s=\frac12$, then~$u \in C^{1,1 - \varepsilon}(\overline\Omega)$ for every~$\varepsilon \in (0, 1)$ and it satisfies~$\| u \|_{C^{1,1 - \varepsilon}(\overline\Omega)} \le C_\varepsilon \| f \|_{C^\alpha(\overline\Omega)}$.
\item If~$s \in (\frac12,1)$, then~$u \in C^{1,2-2s}(\overline\Omega)$ and it satisfies~$\| u \|_{C^{1,2-2s}(\overline\Omega)} \le C \| f \|_{C^\alpha(\overline\Omega)}$.
\end{itemize}
The constant~$C$ depends only on~$n$,~$s$,~$\alpha$,~$\kappa_1$,~$\kappa_2$,~$\Omega$,~$\| p \|_{C^\alpha(\overline\Omega)}$,~$\inf_\Omega p$,~$\| q \|_{C^\alpha(\overline\Omega)}$, and~$\| g \|_{C^\alpha(\overline\Omega)}$, while~$C_\varepsilon$ also depends on~$\varepsilon$.
\end{theorem}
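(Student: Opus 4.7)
I would read the equation as a perturbation of the Dirichlet Laplacian,
\begin{equation*}
-\Delta u = \frac{1}{p}\bigl(f - q\, L_k u - g\cdot Du\bigr) \quad\text{in } \Omega,
\end{equation*}
with homogeneous boundary and exterior values, and set up a fixed-point argument in an appropriate scale of weighted H\"older spaces $C^{k,\beta}_\gamma(\overline\Omega)$ defined via powers of $d(x) = \dist(x,\partial\Omega)$. These spaces are natural because the Hopf behavior forces $u \sim d$ near $\partial\Omega$, so that while $u$ is Lipschitz globally, $D^2 u$ must be allowed to blow up at a prescribed rate determined by $s$. Existence of a unique classical solution, once the fixed-point argument succeeds, will come with the estimates; uniqueness is also ensured a priori by a weak maximum principle for the mixed operator since $\inf_\Omega p>0$ and $q\ge 0$.

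The plan rests on three ingredients. The first is a weighted Schauder estimate for the Dirichlet Laplacian: for $\partial\Omega\in C^{2,\alpha}$ and a right-hand side $h$ in a weighted H\"older class that permits a controlled boundary singularity, the solution of $-\Delta v = h$, $v=0$ on $\partial\Omega$, lies in the weighted class two derivatives higher. The second is a careful mapping property of $L_k$ acting on functions behaving like $d$ near $\partial\Omega$ and vanishing outside $\overline\Omega$. A splitting of the kernel into near and far pieces, combined with a $C^{2,\alpha}$-flattening of the boundary, shows that the exterior-zero contribution $\int_{\R^n\setminus\overline\Omega}u(x)\,k(x{-}y)\,dy$ drives the boundary asymptotics of $L_k u$: it is bounded and $C^{1-2s}$ up to $\partial\Omega$ when $s<1/2$, grows like $\log d$ at the critical value $s=1/2$, and behaves like $d^{1-2s}$ when $s>1/2$. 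This is precisely the source of the three regimes in the statement.

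The third ingredient is the contraction itself. Define the solution map $T$ sending $u$ to the solution $v$ of $-\Delta v = \tfrac 1p(f - q L_k u - g\cdot Du)$ with zero boundary/exterior values. Combining the weighted Schauder estimate with the $L_k$-mapping step, one checks that $T$ preserves a closed ball in the weighted space whose weight exponent is chosen so that the two-derivative gain from the Laplacian is balanced against the order-$2s$ loss from $L_k$ and the order-$\alpha$ loss from the gradient term. Contractivity on small balls (or a compactness alternative) yields the unique fixed point, which is the sought-after solution. Translating the weighted estimates back into standard H\"older norms delivers the three cases: $C^{2,\min\{\alpha,1-2s\}}$ when $s<1/2$, the $\eps$-loss from the logarithm when $s=1/2$, and $C^{1,2-2s}$ when $s>1/2$. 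The explicit form of the constants in each intermediate step gives the linear dependence on $\|f\|_{C^\alpha(\Omega)}$.

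The main obstacle is the sharp boundary analysis of $L_k u$ in the second step. Since~\eqref{k} imposes only two-sided bounds on $k$, one cannot exploit the homogeneity of $|z|^{-n-2s}$; the flattening and remainder estimates must rely solely on the kernel bounds and on $\partial\Omega\in C^{2,\alpha}$. The critical case $s=1/2$ is particularly subtle because the logarithmic divergence forces the arbitrarily small loss $\eps$ in $C^{1,1-\eps}$, and this has to be tracked uniformly through the fixed-point iteration in order to obtain the announced estimate with the correct dependence of the constant on $\eps$.
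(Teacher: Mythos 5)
Your overall architecture coincides with the paper's: the equation is rewritten as a Poisson problem with the nonlocal and drift terms on the right-hand side, solved by a fixed point in H\"older spaces weighted by powers of the distance, with the three regimes coming from the mapping properties of $L_k$ on functions that are Lipschitz across $\partial\Omega$ (bounded and $C^{1-2s}$ for $s<1/2$, logarithmically singular for $s=1/2$, like $d_x^{1-2s}$ for $s>1/2$). The weighted Schauder theory for the Dirichlet Laplacian and the $L_k$ mapping lemmas are precisely the first two ingredients of the paper's proof (the paper works in unweighted spaces when $s<1/2$ and in the weighted scale only for $s\ge 1/2$, but this is a minor difference).

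The genuine gap is in your third ingredient, the closure of the fixed point. The map $T[u]=(-\Delta|_\Omega)^{-1}\big[(f-q\,L_k u-g\cdot Du)/p\big]$ satisfies only an estimate of the form $\|T[u]\|\le C(\|f\|+\|u\|)$ with a constant $C$ that is in general large: there is no smallness parameter in front of $q\,L_k$ or $g\cdot D$, so $T$ is neither a contraction nor does it map a ball into itself, and ``contractivity on small balls'' fails. The paper instead invokes the Leray--Schauder theorem, which requires two things your outline does not supply: (i) compactness of $T$, which rests on a compact embedding of the weighted $C^{2,\beta}$ space into the weighted $C^2$ space (the H\"older seminorm on $D^2u$ is essential for this); and (ii) an a priori bound, uniform in $\lambda\in[0,1]$, for all solutions of $v=\lambda\,T[v]$. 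The latter is the heart of the matter and occupies a full section of the paper: a barrier construction and a maximum principle for the mixed operator yield $\|v\|_{C^0_1(\Omega)}\le C\|f\|_{L^\infty(\Omega)}$ (linear decay towards $\partial\Omega$), and this is combined with an interpolation inequality in the weighted scale and Young's inequality to absorb the $\|v\|$ term appearing on the right of the Schauder estimate. Without the barrier/maximum-principle bound and the interpolation step, the fixed-point argument as you describe it cannot be closed.
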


The statement of Theorem~\ref{mainlinearexthmm} improves the existing literature in that we reach the optimal regularity for~$s\in(1/2,1)$, besides the presence of the coefficients~$p$ and~$q$, of the gradient term, and of the kernel~$k$.
Indeed its claim is sharp in the sense that, regardless of how smooth~$p$,~$q$,~$g$, and~$f$ are, there exist solutions which are not smoother than~$C^{2,1 - 2 s}$ (if~$s\in(0,1/2)$) or~$C^{1,2-2s}$ (if~$s\in(1/2,1)$) up to the boundary. We show this through the following statement valid for dimension~$n = 1$
and constant coefficients~$p,q,g$.

\begin{theorem} \label{counterthm}
Let~$n = 1$ and~$s \in (0, 1)$. For every~$k \in \N$, there exist a function~$f_k \in C^k \! \left( \left[ 0, \frac{1}{2} \right] \right)$ and a solution~$u_k \in L^\infty(\R) \cap C^0(\R) \cap C^\infty \! \left( \left( 0, 1 \right) \right)$ of
\[
\left\lbrace\begin{aligned}
- u_k'' + (-\Delta)^s u_k &= f_k && \mbox{in } \Big( 0, \frac{1}{2} \Big), \\
u_k &= 0 && \mbox{in } (-\infty, 0],
\end{aligned}\right.
\]
such that
$$
u_k \in \begin{dcases}
C^{2,1 - 2 s} \! \left( \left[ 0, \frac{1}{2} \right] \right) \setminus \bigcup_{\varepsilon > 0} C^{2,1 - 2 s + \varepsilon} \! \left( \left[ 0, \frac{1}{2} \right] \right) & \quad \mbox{if } s \in\Big( 0, \frac{1}{2}\Big), \\
\bigcap_{\varepsilon > 0} C^{1,1 - \varepsilon} \! \left( \left[ 0, \frac{1}{2} \right] \right) \setminus C^{1,1} \! \left( \left[ 0, \frac{1}{2} \right] \right) & \quad \mbox{if } s = \frac{1}{2}, \\
C^{1,2 - 2 s} \! \left( \left[ 0, \frac{1}{2} \right] \right) \setminus \bigcup_{\varepsilon > 0} C^{1,2 - 2 s + \varepsilon} \! \left( \left[ 0, \frac{1}{2} \right] \right) & \quad \mbox{if } s \in\Big( \frac{1}{2} , 1 \Big). \\
\end{dcases}
$$
\end{theorem}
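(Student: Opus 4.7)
The plan is to construct $u_k$ explicitly as a finite sum of model profiles $\eta(x)\, x_+^{a}$, where $\eta \in C_c^\infty(\R)$ is a fixed cutoff with $\eta \equiv 1$ on $[-1, 3/4]$ and $\supp \eta \subset [-2, 1]$, choosing the exponents and coefficients so that the singular contributions of $-u_k''$ and $(-\Delta)^s u_k$ cancel in cascade and leave $f_k := -u_k'' + (-\Delta)^s u_k$ of class $C^k$ on $[0, 1/2]$. The key ingredient is the one-dimensional model computation
\begin{equation*}
(-\Delta)^s \bigl(\eta\, x_+^a\bigr)(x) = C(a,s)\, x^{a-2s} + R_a(x), \qquad x \in (0, 1/2),
\end{equation*}
where the remainder $R_a$ is smooth up to $x=0$ (since $(1-\eta)\, x_+^a$ is supported away from $[0, 1/2]$) and $C(a,s)$ is an explicit constant coming from the Riemann--Liouville calculus in one dimension. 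A direct computation in the case $a = 1$ gives $C(1,s)$ proportional to $1/[s(1-2s)]$, which is non-zero for $s \neq 1/2$; for $s = 1/2$ and $a \in \N$ the formula degenerates and is replaced by $x^{a-1}\log(1/x)$ plus lower-order terms.

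Based on this, I would define
\begin{equation*}
u_k(x) := \eta(x) \sum_{j=0}^{N} \gamma_j\, x_+^{a_j}, \qquad a_j := 1 + 2j(1-s),
\end{equation*}
with $\gamma_0 := 1$ and $\gamma_{j+1}$ chosen recursively so that the coefficient of $x^{a_j - 2s}$ in $-u_k'' + (-\Delta)^s u_k$ vanishes. The cascade closes because $a_{j+1} - 2 = a_j - 2s$: the classical second derivative of the $(j+1)$-th profile matches in singular exponent the fractional Laplacian of the $j$-th profile. After $N$ steps the residual $f_k$ has leading singular part of order $x^{a_N - 2s} = x^{(1-2s) + (2-2s) N}$, which lies in $C^k([0, 1/2])$ as soon as $N = N(k,s)$ is large enough (admissible because $2-2s > 0$). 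In the endpoint $s = 1/2$ one replaces the pure power profiles with $\eta(x)\, x^j \log^m(1/x)$ and runs an analogous cascade.

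Sharpness is then read off from the explicit leading-order expansion
\begin{equation*}
u_k(x) = x + \gamma_1\, x^{3-2s} + O(x^{5-4s}) \qquad \text{as } x \to 0^+,
\end{equation*}
where $\gamma_1 \neq 0$ thanks to the non-vanishing of $C(1,s)$: for $s \in (0, 1/2)$ the term $x^{3-2s}$ forces exactly $C^{2, 1-2s}$ regularity at the origin, for $s \in (1/2, 1)$ exactly $C^{1, 2-2s}$, while for $s = 1/2$ the correction of the form $x^2\log(1/x)$ produces a modulus of continuity $r\log(1/r)$ for $u_k'$, giving $u_k \in C^{1, 1-\varepsilon}\setminus C^{1,1}$ for every $\varepsilon \in (0, 1)$. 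The principal technical obstacle is the explicit computation of $C(a_j, s)$ along the full cascade and the verification of non-degeneracy, together with the amendment of the power-function ansatz by logarithmic factors at $s = 1/2$.
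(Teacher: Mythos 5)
Your strategy is the same as the paper's: a cascade of truncated power profiles $x_+^{a_j}$ with $a_j = 1+2j(1-s)$, exploiting $a_{j+1}-2 = a_j-2s$ to cancel singular terms recursively, with sharpness read off from the non-vanishing coefficient of $x^{3-2s}$ (resp.\ $x^2\log x$ at $s=\tfrac12$) in the leading-order expansion. The model computation you invoke is exactly the paper's Lemma on $(-\Delta)^s u_{\alpha,j}$, and your choice of $N$ matches the paper's $M_1(k)=\lceil (k+1)/(2(1-s))\rceil$.

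There is, however, a genuine gap: you confine the logarithmic degeneration to $s=\tfrac12$, but it occurs along the cascade for \emph{every rational} $s$. Writing $2(1-s)=p/q$ in lowest terms, one has $a_j-2s=(j+1)\tfrac{p}{q}-1$, which lands in $\N\cup\{0\}$ precisely when $q\mid(j+1)$; the first such step is $j=q-1$ (e.g.\ for $s=\tfrac14$ one gets $a_1-2s=2$ already at $j=1$). At these steps $(-\Delta)^s x_+^{a_j}$ produces $x^{a_j-2s}\log x$ with a non-zero coefficient rather than a pure power, and no coefficient $\gamma_{j+1}$ of a pure power $x_+^{a_{j+1}}$ can cancel a $\log$ term, since $-(x^{a_{j+1}})''$ is again a pure power. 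One must therefore insert profiles $x^{a}\log^m x$ whose second derivatives supply the required logarithms; moreover each subsequent passage through a degenerate exponent raises the logarithmic degree by one, so the degree of the logarithms must be allowed to grow along the cascade. Handling this bookkeeping (a triple-indexed family of coefficients, with separate recursions at the degenerate and non-degenerate steps) is the bulk of the paper's proof and is entirely absent from your sketch, which would thus fail for every rational $s\neq\tfrac12$ as soon as $N\ge q-1$ — i.e.\ for all but finitely many $k$. The sharpness argument survives this amendment because the log-bearing profiles enter $u_k$ only at exponents $\ge 2(1-s)q+1 > 3-2s$ when $q\ge 2$, so your expansion $u_k(x)=x+\gamma_1 x^{3-2s}+o(x^{3-2s})$ remains valid; but that needs to be checked once the construction is corrected.
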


Our analysis relies, especially when~$s \in \left( \frac{1}{2}, 1 \right)$, on regularity estimates for the classical Poisson equation in a variant of H\"older spaces, to wit, with weights which are powers of the distance to the boundary.
These spaces are introduced in Section~\ref{sec:wei-holder} and the associated regularity theory for the Poisson equation is presented in Section~\ref{sec:reg-poisson}. 
Section~\ref{sec:maxprinc} derives some maximum principles for~\eqref{eq} which are pivotal for the construction of suitable barriers in the subsequent analysis. 
Section~\ref{sec:main} contains the proof of Theorem~\ref{mainlinearexthmm}
split into intermediate statements and claims: among these, we prove also the following result, valid for~$s \in \left( \frac{1}{2}, 1 \right)$.

\begin{proposition} 
Let~$\Omega \subset \R^n$ be a bounded open set with boundary of class~$C^{2, \alpha}$, for some~$\alpha \in (0, 1)$. Let~$k$ be a kernel satisfying~\eqref{k}, for some~$s \in \left( \frac{1}{2}, 1 \right)$ and~$\kappa_2 \ge \kappa_1 > 0$. Let~$p, q \in C^\alpha(\overline\Omega)$ be two non-negative functions, with~$p$ satisfying~$\inf_\Omega p > 0$. Let~$f,g \in C^\alpha(\overline\Omega)$.

Then, problem~\eqref{dir} has a unique solution~$u \in C^2(\Omega) \cap C^0(\R^n)$. Moreover,~$u$ satisfies
\begin{gather*}
\sup_{x \in \Omega} \Big( {d_x^{2s-1} \, \big|D^2 u(x)\big|} \Big)
+\sup_{\substack{x, y \in \Omega\\ x \ne y}} \left( d_{x, y}^{\beta + 2s-1} \, \frac{|D^2 u(x) - D^2 u(y)|}{|x - y|^\beta} \right)
\le C \| f \|_{C^\alpha(\overline\Omega)} \\
d_x \coloneqq \dist(x,\partial\Omega),\qquad d_{x,y} \coloneqq \min\{d_x,d_y\},
\end{gather*}
with~$\beta \coloneqq \min \big\{ {2 - 2 s, \alpha} \big\}$ and for some constant~$C > 0$ depending only on~$n$,~$s$,~$\alpha$,~$\kappa_1$,~$\kappa_2$,~$\Omega$,~$\| p \|_{C^\alpha(\overline\Omega)}$, $\inf_\Omega p$, $\| q \|_{C^\alpha(\overline\Omega)}$, and~$\| g \|_{C^\alpha(\overline\Omega)}$.
\end{proposition}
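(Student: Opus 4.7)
The plan is to recast \eqref{dir} as a Poisson problem $-\Delta u = \tfrac{1}{p}(f - q L_k u - g\cdot Du)$ in $\Omega$ with $u = 0$ on $\partial\Omega$ and to combine the weighted Schauder estimates for the Laplacian from Section~\ref{sec:reg-poisson} with a fixed-point argument in a suitable weighted H\"older space. Existence and uniqueness of $u \in C^2(\Omega) \cap C^0(\R^n)$ are inherited from preceding parts of Section~\ref{sec:main}, which in turn rest on the maximum principles of Section~\ref{sec:maxprinc}; the content of the present proposition is the quantitative weighted $D^2$ bound.

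Let $X$ be the space of functions $v \in C^2(\Omega) \cap C^{1, 2 - 2 s}(\overline\Omega)$ that vanish outside $\Omega$, endowed with $\|v\|_X := \|v\|_{C^{1, 2 - 2 s}(\overline\Omega)} + [v]_\ast$, where $[v]_\ast$ denotes the weighted $D^2$ seminorm appearing in the statement. Consider the affine operator $T$ sending $v \in X$ to the unique solution of
\[
-\Delta (T v) = \tfrac{1}{p} \bigl( f - q \, L_k v - g \cdot D v \bigr) \quad \text{in } \Omega, \qquad T v = 0 \quad \text{on } \partial \Omega,
\]
extended by zero outside $\Omega$. The solution of \eqref{dir} is the unique fixed point of $T$, and the proposition reduces to a linear bound of the form $\|T v\|_X \le C \|f\|_{C^\alpha(\Omega)} + \text{(lower-order terms in } v)$, from which the estimate $\|u\|_X \le C' \|f\|_{C^\alpha(\Omega)}$ is extracted either by iteration at a sufficiently small scale (yielding a genuine contraction after covering $\Omega$) or by a compactness and uniqueness scheme.

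The technical heart of the argument is a weighted H\"older estimate on $L_k v$ for $v \in X$, tailored to land exactly in the input class that the Poisson Schauder step from Section~\ref{sec:reg-poisson} promotes to the $X$-norm of $T v$. I would prove it via the symmetric decomposition
\[
L_k v(x) = -\frac{1}{2} \int_{|z| \le d_x / 2} \bigl( v(x + z) + v(x - z) - 2 v(x) \bigr) k(z)\, dz + \int_{|z| > d_x / 2} \bigl( v(x) - v(x + z) \bigr) k(z) \, dz,
\]
applying to the near part the Taylor identity
\[
v(x + z) + v(x - z) - 2 v(x) = \int_{-1}^{1} (1 - |t|)\, z^{\top} D^2 v(x + t z) \, z \, dt
\]
together with the pointwise control $|D^2 v(y)| \le C \, d_y^{-(2 s - 1)} [v]_\ast$, which is available since $d_y \ge d_x / 2$ on the relevant range. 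The far part is bounded by $\|v\|_{L^\infty(\R^n)} \int_{|z| > d_x / 2} k(z)\, dz \le C d_x^{-2 s} \|v\|_X$. The weighted H\"older modulus of $L_k v$ is obtained by differencing the same decomposition and invoking the weighted $C^\beta$ control on $D^2 v$.

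The main obstacle I anticipate is the interplay of weights when $s$ is close to~$1$: the exponent $2 s - 1$ then approaches~$1$, pushing the near-part integrand to the verge of divergence, and only the first-order cancellation afforded by the symmetry of $k$ keeps the integral under control. Ensuring that the resulting weighted norm on $L_k v$ matches exactly the input class required by the Poisson Schauder estimate of Section~\ref{sec:reg-poisson} --- with the prescribed weight $\beta + 2 s - 1$ and no logarithmic losses --- is the delicate accounting step on which the whole scheme rests.
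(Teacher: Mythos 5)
Your overall architecture coincides with the paper's: invert the Laplacian, estimate $L_k v$ in a weighted H\"older class, apply the weighted Schauder theory of Section~\ref{sec:reg-poisson}, and close with a fixed point. However, two steps as you have written them do not work.

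First, your far-field bound on $L_k v$ loses a full power of the distance. Estimating $\int_{|z|>d_x/2}|v(x)-v(x+z)|\,k(z)\,dz$ by $\|v\|_{L^\infty(\R^n)}\int_{|z|>d_x/2}k(z)\,dz$ gives $C\,d_x^{-2s}\|v\|_X$, whereas the Schauder input class $C^{\beta}_{\gamma}(\Omega)$ of Theorem~\ref{mainC2alphaexistprop} requires $\gamma=2s-1$, i.e.\ a bound of order $d_x^{1-2s}$; worse, $\gamma=2s\ge 1$ is outside the admissible range $\gamma\in(0,1)$ of that theorem, so the scheme does not merely degrade, it fails to apply. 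The correct estimate uses that $v$ is globally Lipschitz (equivalently, that $|v(y)|\lesssim d_y$), so that $|v(x)-v(x+z)|\le \|Dv\|_{L^\infty}\,|z|$ and the far integral is $\lesssim \|Dv\|_{L^\infty}\int_{|z|>d_x/2}|z|^{-(n-1+2s)}dz\simeq d_x^{1-2s}$; this is exactly Lemma~\ref{fraclapests>12lem} in the paper, and your space $X$ does control $\|Dv\|_{L^\infty}$, so the fix is available but is not what you wrote.

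Second, the closing of the fixed point is not supplied. The linear bound $\|Tv\|_X\le C(\|f\|_{C^\alpha(\Omega)}+\|v\|_X)$ has a constant $C$ that is not small, so there is no contraction, at any scale: the operator norm of the linear part of $T$ does not shrink under localization because the nonlocal term sees the whole of $\Omega$. The paper instead runs Leray--Schauder with domain the \emph{weaker} space $C^2_1(\Omega)$: the nonlocal estimate only needs $\|v\|_{C^2_1(\Omega)}$ (not the full $2s-1$ weight), the embedding $C^{2,\beta}_{2s-1,\star}(\Omega)\hookrightarrow C^2_1(\Omega)$ is compact (Lemma~\ref{compactembedlem}), and the a priori bound is closed by interpolating $\|v\|_{C^2_1(\Omega)}$ between $\|v\|_{C^0_1(\Omega)}$ and $\|v\|_{C^{2,\beta}_1(\Omega)}$ (Lemma~\ref{interpollem}) and controlling $\|v\|_{C^0_1(\Omega)}$ by $\|f\|_{L^\infty(\Omega)}$ via the barrier estimate of Theorem~\ref{lineargrowththm}, after which a weighted Young inequality absorbs the superlinear term. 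Without this interpolation--absorption mechanism (or an equivalent one), your ``iteration at a small scale or compactness and uniqueness scheme'' does not yield the claimed estimate $\|u\|_X\le C\|f\|_{C^\alpha(\Omega)}$. Your near-field treatment via the symmetric second-order Taylor formula is fine and in fact slightly sharper than needed.
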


The above result is saying that the solution~$u$ actually belongs to a specific weighted~$C^2$ space, showing that the second derivatives of~$u$ might blow-up like the distance to the boundary raised to~$1 - 2 s$. 
We can interpret this as a quantitative and more precise information than that given by Theorem~\ref{mainlinearexthmm}.
A similar phenomenon holds also when~$s = \frac{1}{2}$, in which case~$|D^2 u|$ blows up slower than any negative power. 

The paper is concluded with Section~\ref{sec:counter}. It contains the proof of Theorem~\ref{counterthm}, which is based on somewhat long and delicate computations.

\section{Functional framework: weighted H\"older spaces}\label{sec:wei-holder}

We collect in this section several tools that will be needed for the proofs of the main results of the paper. We begin by introducing a few functional spaces that will be needed to handle our solutions and the corresponding right-hand sides in the case~$s \in \left[ \frac{1}{2}, 1 \right)$.

Let~$\Omega \subset \R^n$ be a bounded open set with Lipschitz boundary. Denote by~$d_x = d(x)$ the distance of a point~$x \in \Omega$ from the boundary of~$\Omega$ and write~$d_{x, y} \coloneqq \min \{ d_x, d_y \}$ for every~$x, y \in \Omega$.

For~$\beta \in (0, 1)$, we consider the weighted spaces
\begin{align*}
C^0_{-1}(\Omega) & \coloneqq \Big\{ {u \in C^0(\Omega) : \| u \|_{C^0_{-1}(\Omega)} < +\infty} \Big\}, \\
C^1_{-1}(\Omega) & \coloneqq \Big\{ {u \in C^0(\Omega) : \| u \|_{C^1_{-1}(\Omega)} < +\infty} \Big\}, \\
C^2_{-1}(\Omega) & \coloneqq \Big\{ {u \in C^2(\Omega) : \| u \|_{C^2_{-1}(\Omega)} < +\infty} \Big\}, \\
C^{2, \beta}_{-1}(\Omega) & \coloneqq \Big\{ {u \in C^{2, \beta}_\loc(\Omega) : \| u \|_{C^{2, \beta}_{-1}(\Omega)} < +\infty} \Big\},
\end{align*}
respectively endowed with the norms
\begin{align*}
\| u \|_{C^0_{-1}(\Omega)} & \coloneqq \sup_{x \in \Omega} \Big( {d_x^{-1} |u(x)|} \Big), \\
\| u \|_{C^1_{-1}(\Omega)} & \coloneqq \| u \|_{C^0_{-1}(\Omega)} + \| Du \|_{L^\infty(\Omega)}, \\
\| u \|_{C^2_{-1}(\Omega)} & \coloneqq \| u \|_{C^1_{-1}(\Omega)}+ \sup_{x \in \Omega} \Big( {d_x |D^2 u(x)|} \Big), \\
\| u \|_{C^{2, \beta}_{-1}(\Omega)} & \coloneqq \| u \|_{C^2_{-1}(\Omega)} + \sup_{\substack{x, y \in \Omega\\ x \ne y}} \left( d_{x, y}^{1 + \beta} \frac{|D^2 u(x) - D^2 u(y)|}{|x - y|^\beta} \right).
\end{align*}

We will also need the following spaces, depending on an additional parameter~$\gamma \in (0, 1)$:
\begin{align*}
C_\gamma^\beta(\Omega) & \coloneqq \left\{ f \in C^\beta_\loc(\Omega) : \| f \|_{C^\beta_\gamma(\Omega)} < +\infty \right\}, \\
C_{-1,\gamma}^2(\Omega) & \coloneqq \left\{ u \in C^{2}(\Omega) : \| u \|_{C_{-1,\gamma}^2(\Omega)} < +\infty \right\}, \\
C_{-1,\gamma}^{2,\beta}(\Omega) & \coloneqq \left\{ u \in C^{2, \beta}_\loc(\Omega) : \| u \|_{C_{-1,\gamma}^{2,\beta}(\Omega)} < +\infty \right\},
\end{align*}
with
\begin{align*}
\| f \|_{C^\beta_\gamma(\Omega)} & \coloneqq \sup_{x \in \Omega} \Big( {d_x^\gamma \, |f(x)|} \Big) + \sup_{\substack{x, y \in \Omega\\ x \ne y}} \left( d_{x, y}^{\beta + \gamma} \, \frac{|f(x) - f(y)|}{|x - y|^\beta} \right), \\
\| u \|_{C_{-1,\gamma}^2(\Omega)} & \coloneqq \| u \|_{C^0_{-1}(\Omega)} + \| Du \|_{L^\infty(\Omega)} + \sup_{x \in \Omega} \Big( {d_x^\gamma \, |D^2 u(x)|} \Big), \\
\| u \|_{C_{-1,\gamma}^{2,\beta}(\Omega)} & \coloneqq \| u \|_{C_{-1,\gamma}^2(\Omega)} + \sup_{\substack{x, y \in \Omega\\ x \ne y}} \left( d_{x, y}^{\beta + \gamma} \, \frac{|D^2 u(x) - D^2 u(y)|}{|x - y|^\beta} \right).
\end{align*}

We briefly explain the above notational choices. Spaces denoted with just one subscript contain functions which behave like the distance to the boundary raised to minus the power represented by said subscript and whose higher order derivatives (gradients, Hessians, and/or the corresponding H\"older seminorms) scale in a homogeneous fashion. Conversely, if two subscripts are used, the first one prescribes the behavior of the function (with its gradient following the homogeneous scaling), while the second one is attached to the blow-up rate of the Hessian (and, possibly, of its H\"older seminorm).
Moreover, the space~$C_{\gamma}^\beta$ will be typically used for the source term~$f$ of an elliptic equation, while the other spaces for the solution~$u$.

We begin with the following interpolation inequality for functions in the class~$C^{2, \beta}_{-1}(\Omega)$.

\begin{lemma} \label{interpollem}
Let~$\beta \in (0, 1)$ and~$\Omega \subset \R^n$ be a bounded open set. Then,
\begin{equation} \label{interpolineq}
\| u \|_{C^2_{-1}(\Omega)} \le C \| u \|_{C^0_{-1}(\Omega)}^{\frac{\beta}{2 (1 + \beta)}} \| u \|_{C^{2, \beta}_{-1}(\Omega)}^{\frac{2 + \beta}{2 (1 + \beta)}} \quad \mbox{for every } u \in C^{2, \beta}_{-1}(\Omega),
\end{equation}
for some constant~$C > 0$ depending only on~$n$ and~$\beta$.
\end{lemma}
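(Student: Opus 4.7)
The plan is to reduce the weighted inequality to the classical unweighted interpolation on a unit ball by a rescaling argument. Fix $x_0 \in \Omega$ and set $r \coloneqq d_{x_0}/2$, so that $B_r(x_0) \subset \Omega$ and $r \le d_y \le 3 r$ for every $y \in B_r(x_0)$. Consider the rescaled function $v(y) \coloneqq u(x_0 + r y)$ on $B_1$. The definitions of the weighted norms immediately translate into
\begin{align*}
\| v \|_{L^\infty(B_1)} &\le 3 r \, \| u \|_{C^0_1(\Omega)}, \\
\| D^2 v \|_{L^\infty(B_1)} &\le 2 r \, \| u \|_{C^2_1(\Omega)}, \\
[D^2 v]_{C^\beta(B_1)} &\le 2^{1 + \beta} r \, \| u \|_{C^{2, \beta}_1(\Omega)},
\end{align*}
using the fact that $d_y$ and $d_{y, z}$ are comparable to $r$ for $y, z \in B_r(x_0)$. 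Every quantity scales linearly in $r$, consistently with the homogeneity built into the weighted norms.

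Next, apply the classical Gagliardo--Nirenberg interpolation inequality for H\"older spaces on $B_1$ (e.g.\ in the form of \cite{} Lemma 6.32 in Gilbarg--Trudinger): for $j \in \{1, 2\}$,
\[
\| D^j v \|_{L^\infty(B_{1/2})} \le C \, \| v \|_{L^\infty(B_1)}^{1 - j/(2 + \beta)} \Big( \| v \|_{L^\infty(B_1)} + [D^2 v]_{C^\beta(B_1)} \Big)^{j/(2 + \beta)}.
\]
Substituting the three estimates above and observing that all powers of $r$ collapse to $r^{1 - j}$ upon computing $|D^j u(x_0)| = r^{-j} |D^j v(0)|$, one obtains the pointwise bounds
\begin{align*}
|Du(x_0)| &\le C \, \| u \|_{C^0_1(\Omega)}^{(1 + \beta)/(2 + \beta)} \| u \|_{C^{2,\beta}_1(\Omega)}^{1/(2 + \beta)}, \\
d_{x_0} |D^2 u(x_0)| &\le C \, \| u \|_{C^0_1(\Omega)}^{\beta/(2 + \beta)} \| u \|_{C^{2,\beta}_1(\Omega)}^{2/(2 + \beta)},
\end{align*}
with constants independent of $x_0$.

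Finally, to pass from the Gagliardo--Nirenberg exponents $\beta/(2+\beta)$ and $(1+\beta)/(2+\beta)$ to the exponent $\beta/(2(1+\beta))$ appearing in the statement, use the trivial monotonicity $\| u \|_{C^0_1(\Omega)} \le \| u \|_{C^{2, \beta}_1(\Omega)}$ together with the fact that both $(1+\beta)/(2+\beta)$ and $\beta/(2+\beta)$ exceed $\beta/(2(1+\beta))$: lowering the exponent on the smaller quantity $\| u \|_{C^0_1(\Omega)}$ (and compensating on the larger one $\| u \|_{C^{2,\beta}_1(\Omega)}$, with exponents summing to $1$) can only enlarge the bound. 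Taking the supremum over $x_0 \in \Omega$ and adding the trivial term $\|u\|_{C^0_1(\Omega)} \le \|u\|_{C^0_1(\Omega)}^{\beta/(2(1+\beta))} \|u\|_{C^{2,\beta}_1(\Omega)}^{(2+\beta)/(2(1+\beta))}$ gives \eqref{interpolineq}.

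The proof is mostly bookkeeping; the only mildly delicate step is the exponent adjustment in the last paragraph, which must be presented carefully so that the reader sees why the weaker form of the Gagliardo--Nirenberg inequality (with exponents $\beta/(2(1+\beta))$ instead of $\beta/(2+\beta)$) is the one relevant for the subsequent applications in Sections~\ref{sec:reg-poisson} and~\ref{sec:main}.
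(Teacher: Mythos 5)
Your proposal is correct, but it follows a genuinely different route from the paper. The paper adapts the elementary argument of Lunardi's Proposition~1.1.2 directly in the weighted setting: it first proves $|Du(x)| \le C \| u \|_{C^0_{1}}^{1/2} \| u \|_{C^2_{1}}^{1/2}$ by testing the Taylor expansion of $u$ at $x$ against a vector $z = \ell\, d_x \,Du(x)/|Du(x)|$ and optimizing in $\ell \in (0,1/2)$, then runs the same argument on $D_j u$ to control $d_x |D^2 u(x)|$ in terms of $\|Du\|_{L^\infty}$ and the weighted H\"older seminorm, and finally chains the two estimates. You instead rescale to the unit ball $B_1$ via $v(y) = u(x_0 + r y)$ with $r = d_{x_0}/2$, check that all three relevant quantities scale like $r$, invoke the classical unweighted interpolation inequality on $B_1$ as a black box, and undo the scaling. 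Both arguments are valid under the stated hypotheses (only boundedness of $\Omega$ is used, since $B_{d_{x_0}/2}(x_0) \subset \Omega$ always). Your route actually produces the sharper ``natural'' exponents $\beta/(2+\beta)$ and $(1+\beta)/(2+\beta)$, and the final exponent adjustment to $\beta/(2(1+\beta))$ via the monotonicity $\|u\|_{C^0_1} \le \|u\|_{C^{2,\beta}_1}$ is legitimate (one checks $\beta/(2+\beta) \ge \beta/(2(1+\beta))$ and $(1+\beta)/(2+\beta) \ge \beta/(2(1+\beta))$, and lowering the exponent on the smaller factor while keeping the exponents summing to $1$ only enlarges the product). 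The paper's weaker exponent arises for a different reason---from chaining the gradient estimate into the Hessian estimate---but the resulting statement is the same. The only cosmetic remarks: the intermediate bound $\|D^2 v\|_{L^\infty(B_1)} \le 2r\|u\|_{C^2_1(\Omega)}$ is never used, and the closing sentence about which form is ``relevant for the subsequent applications'' is commentary rather than proof; in a final write-up you should cite a precise multiplicative interpolation inequality (e.g.\ Lunardi, Proposition~1.1.3, which the paper itself uses elsewhere) rather than leaving the reference blank.
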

\begin{proof}
We adapt the argument of~\cite{L95}*{Proposition~1.1.2}. First, we claim that
\begin{equation} \label{gradinterpol}
|Du(x)| \le C \| u \|_{C^0_{-1}(\Omega)}^{\frac{1}{2}} \| u \|_{C^2_{-1}(\Omega)}^{\frac{1}{2}} \quad \mbox{for every } x \in \Omega \mbox{ and } u \in C^2_{-1}(\Omega),
\end{equation}
for some dimensional constant~$C > 0$. Let~$x \in \Omega$ and assume that~$u \neq 0$ in~$\Omega$ and~$Du(x) \ne 0$, as otherwise there is nothing to prove. For~$z \in B_{\frac{d_x}{2}}$, we have
\begin{align*}
|D u(x) \cdot z| & \le \left| u(x + z) - u(x) - Du(x) \cdot z \right| + |u(x + z)| + |u(x)| \\
& \le d_x^{-1} |z|^2 \sup_{y \in \Omega} \Big( {d_y |D^2 u(y)|} \Big) + 3 d_x \sup_{y \in \Omega} \Big( d_y^{-1} |u(y)| \Big).
\end{align*}
Taking~$z \coloneqq \ell \, d_x \frac{Du(x)}{|Du(x)|}$, we get that
$$
|Du(x)| \le 3 \left\{ \ell \, \| u \|_{C^2_{-1}(\Omega)} + \ell^{-1} \| u \|_{C^0_{-1}(\Omega)} \right\} \quad \mbox{for every } \ell \in \left( 0, \frac{1}{2} \right).
$$
Claim~\eqref{gradinterpol} immediately follows after an optimization in~$\ell$, that is, by choosing, \textit{e.g.},
$$
\ell \coloneqq \frac14 \sqrt{ \frac{\| u \|_{C^0_{-1}(\Omega)}}{\| u \|_{C^2_{-1}(\Omega)}}}.
$$

Next, we claim that
\begin{equation} \label{hessinterpol}
d_x |D^2 u(x)| \le C \| u \|_{C^0_{-1}(\Omega)}^{\frac{\beta}{2 (1 + \beta)}} \| u \|_{C^{2, \beta}_{-1}(\Omega)}^{\frac{2 + \beta}{2 (1 + \beta)}} \quad \mbox{for every } x \in \Omega \mbox{ and } u \in C^{2, \beta}_{-1}(\Omega),
\end{equation}
for some constant~$C > 0$ depending only on~$n$ and~$\beta$. The verification of this fact is similar to the previous one. Indeed, for~$j \in \{1, \ldots, n\}$ and~$z = \ell \, d_x \frac{D D_j u(x)}{|D D_j u(x)|}$ with~$\ell \in \left( 0, \frac{1}{2} \right)$, we compute
\begin{align*}
d_x |D D_j u(x)| & = \ell^{-1} |D D_j u(x) \cdot z| \\
& \le \ell^{-1} \Big( {\big| {D_j u(x + z) - D_j u(x) - D D_j u(x) \cdot z} \big| + |D_j u(x + z)| + |D_j u(x)|} \Big) \\
& \le 2 \, \ell^{-1} \Bigg\{ {d_{x,x+z}^{- 1 - \beta} |z|^{1 + \beta} \sup_{\substack{y, w \in \Omega\\y \ne w}} \left( {d_{y, w}^{1 + \beta} \frac{|D^2 u(y) - D^2 u(w)|}{|y - w|^\beta}} \right) + \| Du \|_{L^\infty(\Omega)}} \Bigg\} \\
& \le 2^{2+\beta} \Bigg\{ {\ell^{\beta} \| u \|_{C^{2, \beta}_{-1}(\Omega)} + \ell^{-1} \| Du \|_{L^\infty(\Omega)}} \Bigg\},
\end{align*}
where we have used that $d_{x,x+z}\geq d_x-|z|=(1-\ell)d_x>\frac12d_x$.
Estimate~\eqref{hessinterpol} follows by taking
$$
\ell = \frac{1}{4} \left\{ \frac{\| Du \|_{L^\infty(\Omega)}}{\| u \|_{C^{2, \beta}_{-1}(\Omega)}} \right\}^{\! \frac{1}{1 + \beta}}
$$
and applying~\eqref{gradinterpol}.

By combining~\eqref{gradinterpol} and~\eqref{hessinterpol}, we easily obtain inequality~\eqref{interpolineq}. The proof is thus complete.
\end{proof}

Next, we show that the gradients of the functions in~$C_{-1,\gamma}^{2}(\Omega)$ are actually H\"older continuous up to the boundary of~$\Omega$.

\begin{lemma} \label{C2alphaisunweightedlem}
Let~$\gamma \in (0, 1)$,~$\Omega \subset \R^n$ be a bounded open set with Lipschitz boundary, and~$u \in C_{-1,\gamma}^{2}(\Omega)$. Then,~$u \in C^{1, 1 - \gamma}(\overline\Omega)$ with
$$
\| u \|_{C^{1, 1 - \gamma}(\overline\Omega)} \le C \| u \|_{C^{2}_{-1,\gamma}(\Omega)},
$$
for some constant~$C >0$ depending only on~$n$,~$\gamma$, and~$\Omega$.
\end{lemma}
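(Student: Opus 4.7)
The plan is to establish the H\"older seminorm bound $|Du(x) - Du(y)| \le C M |x-y|^{1-\gamma}$ for every $x,y \in \Omega$, where $M := \|u\|_{C^{2}_{\gamma,\star}(\Omega)}$. The $L^\infty$-parts $\|u\|_{L^\infty(\Omega)} \le \mathrm{diam}(\Omega)\,\|u\|_{C^0_{1}(\Omega)}$ and $\|Du\|_{L^\infty(\Omega)} \le M$ are already built into the weighted norm, and the seminorm is trivial whenever $|x-y|$ is bounded away from $0$ by the $L^\infty$-bound on $Du$. Thus the only content lies in the regime of small $|x-y|$, and the guiding idea is to connect $x$ and $y$ by a rectifiable path $\gamma \subset \Omega$ of length $\le C|x-y|$ along which $\int_\gamma d_z^{-\gamma}\,ds \le C|x-y|^{1-\gamma}$. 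Combined with the pointwise bound $|D^2 u(z)| \le M\, d_z^{-\gamma}$ and the fundamental theorem of calculus for $Du$ along $\gamma$, this would give the desired estimate.

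I would split into two regimes. In the \emph{interior} regime $|x-y| \le d_{x,y}/2$, on the straight segment $[x,y]$ one has $d_z \ge d_{x,y}/2 \ge |x-y|$, and integrating $|D^2u|\le M d_z^{-\gamma} \le M|x-y|^{-\gamma}$ over the segment immediately gives $|Du(x)-Du(y)|\le M|x-y|^{1-\gamma}$.

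The main effort lies in the complementary \emph{boundary} regime $|x-y| > d_{x,y}/2$, where both $x$ and $y$ are close to $\partial\Omega$ at comparable distance and the segment $[x,y]$ may approach $\partial\Omega$ too closely. Here I would use that, on small scales, $\partial\Omega$ is (after a rigid motion) the graph of an $L$-Lipschitz function $\phi$, so that $\Omega$ is locally $\{(z',z_n): z_n > \phi(z')\}$ and $d_z \asymp z_n - \phi(z')$ uniformly in the chart. Setting $H := (L+1)|x-y|$, I would connect $x = (x',x_n)$ and $y = (y',y_n)$ by the three-legged polygonal path
\[
x \;\longrightarrow\; x + H e_n \;\longrightarrow\; (y', x_n + H) \;\longrightarrow\; y
\]
of total length $\le C|x-y|$, which stays in $\Omega$ by construction. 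On the two vertical legs, the boundary distance grows linearly from $d_x$ and $d_y$ respectively; along the horizontal leg it is uniformly $\gtrsim |x-y|$. Elementary one-variable computations of the form $\int_0^\ell (a+t)^{-\gamma}\,dt = \frac{(a+\ell)^{1-\gamma}-a^{1-\gamma}}{1-\gamma}$, together with the bound $d_x, d_y \le 2|x-y|$ available in this regime, then produce $\int_\gamma d_z^{-\gamma}\,ds \le C|x-y|^{1-\gamma}$, as required.

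The hard part will be the construction and control of the path in the boundary regime: it relies on the local graphical description of the Lipschitz boundary and on the quantitative equivalence $d_z \asymp z_n - \phi(z')$ inside the chart, which simultaneously guarantees that the path stays in $\Omega$ and that $d_z$ has the needed lower bound along it. Everything else reduces to routine one-dimensional integrations and a standard compactness/covering argument that lets us assume $x$ and $y$ lie in a single chart.
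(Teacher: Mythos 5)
Your proposal is correct, and while the overall skeleton (interior regime handled by integrating $D^2u$ along the segment, boundary regime handled in a local Lipschitz chart) matches the paper's, the mechanism for the boundary regime is genuinely different. The paper straightens the boundary with a bi-Lipschitz map $\Psi$ and then runs the telescoping chain argument of Ros-Oton--Serra: it lifts $x$ and $y$ to auxiliary points at height comparable to $|x-y|$ via sequences with geometrically decreasing vertical steps, applies the already-proved ``close points'' estimate \eqref{Holderestclosepoints} to each consecutive pair, and sums a geometric series; the horizontal comparison is done with boundedly many applications of the same estimate. You instead integrate the pointwise bound $|D^2u(z)|\le M d_z^{-\gamma}$ directly along an explicit three-legged polygonal path, controlling $\int_\gamma d_z^{-\gamma}\,ds$ by elementary one-dimensional integrals; the needed facts ($d_z\gtrsim d_x+t$ on the vertical legs, $d_z\gtrsim|x-y|$ on the horizontal leg, and $d_x,d_y\le 3|x-y|$ in this regime by the $1$-Lipschitz property of the distance) all hold in the chart. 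Your route is more elementary and self-contained, avoiding both the diffeomorphism and the dyadic chain; the paper's route has the advantage of using only the local H\"older seminorm of $Du$ at scale $d_x$ as a black box (so it transfers to situations where no pointwise Hessian bound is available), but since the weighted Hessian bound is part of the $C^2_{\gamma,\star}$ norm here, nothing is lost by your more direct computation.
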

\begin{proof}
As~$\| u \|_{C^1(\Omega)} \le \big( {1 + \diam(\Omega)} \big) \| u \|_{C_{-1,\gamma}^{2}(\Omega)}$, we only need to verify that
\begin{equation} \label{Holderestclaim}
|Du(x) - Du(y)| \le C \| u \|_{C^{2}_{-1,\gamma}(\Omega)} |x - y|^{1 - \gamma} \quad \mbox{for all } x, y \in \Omega,
\end{equation}
for some~$C >0$ depending only on~$n$,~$\gamma$, and~$\Omega$.

To this aim, observe that, if~$|x - y| < \frac{d_x}{2}$, then
\begin{align*}
|Du(x) - Du(y)| & \le |x - y| \int_0^1 \big| {D^2 u(t x + (1 - t) y)} \big| \, dt \\
& \le 2 \, d_x^{-\gamma} \, |x - y| \sup_{z \in B_{d_x / 2}(x)} \Big( {d_z^{\gamma} |D^2 u(z)|} \Big) \le 2 \| u \|_{C_{-1,\gamma}^{2}(\Omega)} |x - y|^{1 - \gamma}.
\end{align*}
Thus, we proved that
\begin{equation} \label{Holderestclosepoints}
|Du(x) - Du(y)| \le 2 \| u \|_{C^{2}_{-1,\gamma}(\Omega)} |x - y|^{1 - \gamma} \quad \mbox{for all } x, y \in \Omega \mbox{ such that } |x - y| < \frac{d_x}{2}.
\end{equation}
Thanks to this, the interior differentiability of~$Du$, and a simple covering argument, it is not hard to see that~\eqref{Holderestclaim} will be established if we show that, for every~$\bar{x} \in \partial \Omega$ there exist a radius~$R_{\bar{x}} > 0$ and a constant~$C_{\bar{x}} > 0$, both depending at most on~$n$,~$\gamma$,~$\Omega$, and~$\bar{x}$, such that
\begin{equation} \label{Holderestclaiminball}
|Du(x) - Du(y)| \le C_{\bar{x}} \| u \|_{C^{2}_{-1,\gamma}(\Omega)} |x - y|^{1 - \gamma} \quad \mbox{for all } x, y \in \Omega \cap B_{R_{\bar{x}}}(\bar{x}) \mbox{ such that } |x - y| \ge \frac{d_x}{2}.
\end{equation}

Claim~\eqref{Holderestclaiminball} can be established by arguing as in the proof of~\cite{RS14}*{Proposition~1.1}. We reproduce here the details for the convenience of the reader.

Let~$\bar{x} \in \partial \Omega$. As~$\partial \Omega$ is Lipschitz, there exist a radius~$R > 0$ and a Lipschitz diffeomorphism~$\Psi: \R^n \to \R^n$, with inverse~$\Phi \coloneqq \Psi^{-1}$, such that~$\Psi(0) = \bar{x}$,~$d_{\Psi(z)} \le z_n \le K d_{\Psi(z)}$ for all~$z \in B_{K^2 R} \cap \R^n_+$, where~$\R^n_+ \coloneqq \R^{n - 1} \times (0, +\infty)$, and
\begin{gather*}
B_{K^4 R}(\bar{x}) \cap \Omega \subset \Psi \Big( {B_{K^5 R} \cap \R^n_+} \Big) \subset B_{K^6 R}(\bar{x}) \cap \Omega, \\
B_{K^4 R}(\bar{x}) \cap \partial \Omega \subset \Psi \Big( {B_{K^5 R} \cap \partial \R^n_+} \Big) \subset B_{K^6 R}(\bar{x}) \cap \partial \Omega,
\end{gather*} 
with~$K \coloneqq 1 + \| D\Psi \|_{L^\infty(\R^n)} + \| D\Phi \|_{L^\infty(\R^n)} \ge 3$---see the proof of the forthcoming Theorem~\ref{aprioriC2alphaestlem} for the construction of~$\Psi$. Set~$\mathbf{v}(z) \coloneqq \left( Du \circ \Psi \right)(z)$ for~$z \in B_{K^5 R} \cap \R^n_+$. Then, estimate~\eqref{Holderestclosepoints} yields that
\begin{equation} \label{Holderforvbold}
\left| \mathbf{v}(z) - \mathbf{v}(w) \right| \le 2 K \| u \|_{C_{-1,\gamma}^{2}(\Omega)} |z - w|^{1 - \gamma} \quad \mbox{for all } z, w \in B_{K^2 R} \cap \R^n_+ \mbox{ such that } |z - w| < K^{-3} z_n.
\end{equation}

Let now instead~$z, w \in B_{K R} \cap \R^n_+$ be such that~$|z - w| \ge K^{-3} z_n$. Consider the auxiliary points~$\bar{z} \coloneqq z + |z - w| e_n$ and~$\bar{w} \coloneqq w + |z - w| e_n$ in~$B_{K^2 R} \cap \R^n_+$, as well as the sequences~$\{ z^{(j)} \}, \{ w^{(j)} \} \subset B_{K^2 R} \cap \R^n_+$ defined by~$z^{(j)} \coloneqq z + \frac{|z - w|}{(1 + K^{-3})^{j - 1}} \, e_n$ and~$w^{(j)} \coloneqq w + \frac{|z - w|}{(1 + K^{-3})^{j - 1}} \, e_n$ for~$j \in \N$. Clearly~$z^{(1)} = \bar{z}$,~$w^{(1)} = \bar{w}$ and~$z^{(j)} \rightarrow z$,~$w^{(j)} \rightarrow w$ as~$j \rightarrow +\infty$. Moreover,~$|z^{(j + 1)} - z^{(j)}| \le K^{-3} z_n^{(j + 1)}$ and~$|w^{(j + 1)} - w^{(j)}| \le K^{-3} w_n^{(j + 1)}$ for every~$j \in \N$. As a result, we may apply~\eqref{Holderforvbold} to deduce that
\begin{align*}
|\mathbf{v}(z) - \mathbf{v}(\bar{z})| & \le \sum_{j \in \N} \big| {\mathbf{v}(z^{(j + 1)}) - \mathbf{v}(z^{(j)})} \big| \le 2 K \| u \|_{C_{-1,\gamma}^{2}(\Omega)} \sum_{j \in \N} \big| {z^{(j + 1)} - z^{(j)}} \big|^{1 - \gamma} \\
& \le 2 K^{1 - 3 (1 - \gamma)} \| u \|_{C_{-1,\gamma}^{2}(\Omega)} |z - w|^{1 - \gamma} \sum_{j \in \N} \big( {1 + K^{-3}} \big)^{\! -(1 - \gamma)j} \le C_{K, \gamma} \| u \|_{C_{-1,\gamma}^{2}(\Omega)} |z - w|^{1 - \gamma},
\end{align*}
and, similarly, that~$|\mathbf{v}(w) - \mathbf{v}(\bar{w})| \le C_{K, \gamma} \| u \|_{C_{-1,\gamma}^{2}(\Omega)} |z - w|^{1 - \gamma}$, for some constant~$C_{K, \gamma} > 0$ depending only on~$K$ and~$\gamma$. 

Since~$\bar{z}_n, \bar{w}_n \ge |z - w|$ and~$|\bar{z} - \bar{w}| = |z - w|$, we can also obtain that~$|\mathbf{v}(\bar{z}) - \mathbf{v}(\bar{w})| \le C_{K, \gamma} \| u \|_{C_{-1,\gamma}^{2}(\Omega)} |z - w|^{1 - \gamma}$, for some possibly larger~$C_{K, \gamma}$, by dividing the segment joining~$\bar{z}$ and~$\bar{w}$ into at most~$\lceil K^3   \rceil + 1$ equally spaced subsegments, applying~\eqref{Holderforvbold} between each two consecutive endpoints, and adding up the result.

All in all, we proved that~$|\mathbf{v}(z) - \mathbf{v}(w)| \le C_{K, \gamma} \| u \|_{C_{-1,\gamma}^{2}(\Omega)} |z - w|^{1 - \gamma}$ for every~$z, w \in B_{K R} \cap \R^n_+$ such that~$|z - w| \ge K^{-3 } z_n$. By going back to~$u$ and the variables~$x, y \in \Omega$, we easily arrive at~\eqref{Holderestclaiminball} with~$R_{\bar{x}} = R$. The proof of the lemma is thus finished.
\end{proof}

Note that the norm~$\| \cdot \|_{C^{2}_{-1,\gamma}(\Omega)}$ is (strictly) stronger than~$\| \cdot \|_{C^2_{-1}(\Omega)}$, since the latter allows the Hessian to blow-up at the boundary at a faster rate. Consequently, the space~$C^{2}_{-1,\gamma}(\Omega)$ is continuously embedded in~$C^2_{-1}(\Omega)$. The next lemma shows that, by requiring some H\"older continuity on the second derivatives---\textit{i.e.}, by considering~$C^{2, \beta}_{-1,\gamma}(\Omega)$ in place of~$C^{2}_{-1,\gamma}(\Omega)$---, the embedding is also compact.

\begin{lemma} \label{compactembedlem}
Let~$\beta, \gamma \in (0, 1)$ and~$\Omega \subset \R^n$ be a bounded open set with Lipschitz boundary. Then,~$C^{2, \beta}_{-1,\gamma}(\Omega)$ is compactly embedded in~$C^2_{-1}(\Omega)$.
\end{lemma}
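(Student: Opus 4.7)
The plan is to take a sequence $\{u_k\}_{k \in \N} \subset C^{2,\beta}_{\gamma,\star}(\Omega)$ with $\| u_k \|_{C^{2,\beta}_{\gamma,\star}(\Omega)} \le M$ and extract a subsequence converging in $C^2_1(\Omega)$. \emph{First}, I would apply Lemma~\ref{C2alphaisunweightedlem} to deduce that the sequence is bounded in $C^{1, 1 - \gamma}(\Omega)$. Since $\Omega$ is bounded with Lipschitz boundary, a standard Ascoli--Arzel\`a argument gives a compact embedding $C^{1, 1 - \gamma}(\Omega) \hookrightarrow C^1(\overline{\Omega})$, so up to a (non-relabeled) subsequence $u_k \to u$ in $C^1(\overline{\Omega})$ for some limit function $u$.

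\emph{Second}, on every compact $K \Subset \Omega$ the definition of the weighted norm immediately yields $\| D^2 u_k \|_{L^\infty(K)} \le M (\inf_K d)^{-\gamma}$ and $[D^2 u_k]_{C^\beta(K)} \le M (\inf_K d)^{-(\beta + \gamma)}$. A diagonal extraction along an exhaustion of $\Omega$ by compact subsets, combined with Ascoli--Arzel\`a, produces a further subsequence such that $u_k \to u$ in $C^2_{\mathrm{loc}}(\Omega)$, and in particular $u \in C^2(\Omega)$.

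\emph{Third}, to upgrade to convergence in $C^2_1(\Omega)$, set $v_k \coloneqq u_k - u$ and estimate the three contributions to $\| v_k \|_{C^2_1(\Omega)}$ separately. The gradient term $\| D v_k \|_{L^\infty(\Omega)}$ vanishes by the first stage. The zeroth-order term is treated by observing that both $u_k$ and $u$ extend continuously by zero to $\partial \Omega$ (since $|u_k(x)|, |u(x)| \le M d_x$, the latter by pointwise passage to the limit) and that for any $x \in \Omega$ and any nearest boundary point $\bar x \in \partial \Omega$ the open segment from $x$ to $\bar x$ lies in $\Omega$; the fundamental theorem of calculus then gives $|v_k(x)| \le \| D v_k \|_{L^\infty(\Omega)} \, d_x$, so $\| v_k \|_{C^0_1(\Omega)} \to 0$. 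The weighted Hessian term $\sup_{x \in \Omega} d_x |D^2 v_k(x)|$ is the delicate one: given $\varepsilon > 0$, I would split $\Omega$ at a small threshold $\delta > 0$, use the factorization $d_x = d_x^{1 - \gamma} d_x^\gamma$ together with the uniform bound $d_x^\gamma |D^2 v_k| \le 2 M$ to control the boundary strip $\{d_x < \delta\}$ by $2 M \, \delta^{1 - \gamma}$, and invoke the $C^2_{\mathrm{loc}}$-convergence from the second stage on the compact interior $\{d_x \ge \delta\}$, where $d_x \le \diam(\Omega)$.

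The main obstacle is precisely this last term: the interior Ascoli--Arzel\`a convergence from the second stage says nothing about the behavior of $D^2 v_k$ near $\partial \Omega$, where $|D^2 u_k|$ is allowed to blow up like $d_x^{-\gamma}$. The crucial gain rescuing compactness is the strict inequality $\gamma < 1$, which converts the admissible blow-up into an arbitrarily small $d_x$-weighted contribution inside a sufficiently thin boundary strip, making the embedding compact and not just continuous.
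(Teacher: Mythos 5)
Your proposal is correct and follows essentially the same route as the paper: pass to a subsequence converging in $C^1(\overline{\Omega}) \cap C^2_{\loc}(\Omega)$ via Lemma~\ref{C2alphaisunweightedlem} and a diagonal argument, handle the zeroth-order weighted term through the gradient convergence and the vanishing boundary values, and control the weighted Hessian term by splitting at a thin boundary strip where the uniform bound $d_x^\gamma |D^2 u_k| \le M$ combined with the factor $d_x^{1-\gamma} \le \delta^{1-\gamma}$ gives smallness. The only cosmetic difference is your use of the segment to the nearest boundary point for the $C^0_1$ term, where the paper directly bounds $\sup_x d_x^{-1}|u_k(x)-u(x)|$ by $\| Du_k - Du\|_{L^\infty(\Omega)}$; both are the same estimate.
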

\begin{proof}
Let~$\{ u_k \}_{k\in\N} \subset C^{2, \beta}_{-1,\gamma}(\Omega)$ be a bounded sequence, \textit{i.e.}, without loss of generality, such that
\begin{equation} \label{ukbounded}
\| u_k \|_{C^{2, \beta}_{-1,\gamma}(\Omega)} \le 1 \quad \mbox{for every } k \in \N.
\end{equation}
Since, by Lemma~\ref{C2alphaisunweightedlem}, the space~$C_{-1,\gamma}^{2}(\Omega)$ (and thus the smaller~$C^{2, \beta}_{-1,\gamma}(\Omega)$) is continuously embedded in~$C^{1, 1- \gamma}(\overline\Omega)$, standard compact embedding theorems for (unweighted) H\"older spaces and a diagonal argument yield that, up to a subsequence,~$u_k$ converges in~$C^{1}(\overline{\Omega}) \cap C^{2}_\loc(\Omega)$ to a function~$u \in C^{1, 1- \gamma}(\overline\Omega) \cap C^{2, \beta}_\loc(\Omega)$ satisfying
\begin{equation} \label{ubounded}
\| u \|_{C^{2, \beta}_{-1,\gamma}(\Omega)} \le 1.
\end{equation}

As each~$u_k$ can be continuously extended to a function defined on~$\overline{\Omega}$  vanishing on its boundary, so does~$u$, and we easily deduce that~$\sup_{x \in \Omega} \Big( {d_x^{-1} |u_k(x) - u(x)|} \Big) \le \| D u_k - D u \|_{L^\infty(\Omega)} \rightarrow 0$ as~$k \rightarrow +\infty$. We are left to prove that
\begin{equation} \label{weightedC2convofuktou}
\lim_{k \rightarrow +\infty} \sup_{x \in \Omega} \Big( d_x |D^2 u_k(x) - D^2 u(x)| \Big) = 0.
\end{equation}
To see it, let~$\varepsilon > 0$ be fixed and observe that, by the local~$C^2$ convergence, there exists~$N_\varepsilon \in \N$ such that~$\| D^2 u_k - D^2 u \|_{L^\infty(\Omega_{\delta_\varepsilon})} \le \diam(\Omega)^{-1} \varepsilon$ for every~$k \in \N$ with~$k \ge N_\varepsilon$, where~$\delta_\varepsilon \coloneqq \varepsilon^{\frac{1}{1 - \gamma}} / 2$. Here, we adopted the notation~$\Omega_\delta \coloneqq \big\{ {x \in \Omega : d_x > \delta} \big\}$, for~$\delta > 0$. Consequently,
$$
\sup_{x \in \Omega_{\delta_\varepsilon}} \Big( d_x |D^2 u_k(x) - D^2 u(x)| \Big) \le \diam(\Omega) \| D^2 u_k - D^2 u \|_{L^\infty(\Omega_{\delta_\varepsilon})} \le \varepsilon \quad \mbox{for every } k \in \N \mbox{ with } k \ge N_\varepsilon.
$$
On the other hand, recalling~\eqref{ukbounded} and~\eqref{ubounded}, we obtain
$$
\sup_{x \in \Omega \setminus \Omega_{\delta_\varepsilon}} \Big( d_x |D^2 u_k(x) - D^2 u(x)| \Big) \le \delta_\varepsilon^{1 - \gamma} \left\{ \sup_{x \in \Omega} \Big( d_x^\gamma |D^2 u_k(x)| \Big) + \sup_{x \in \Omega} \Big( d_x^\gamma |D^2 u(x)| \Big) \right\} \le 2 \delta_\varepsilon^{1 - \gamma} = \varepsilon,
$$
for every~$k \in \N$. These two facts immediately lead us to~\eqref{weightedC2convofuktou} and, with it, to the conclusion of the proof.
\end{proof}

\section{Regularity for the Poisson equation in weighted H\"older spaces}\label{sec:reg-poisson}

The aim of this section is to study the solvability of the Dirichlet problem for the Laplacian with zero boundary datum and with a right-hand side that blows up at a strictly slower rate than the inverse of the distance to the boundary. In particular, we are interested in describing the boundary behavior of the solution and of its derivatives through the spaces~$C^{2, \beta}_{-1,\gamma}$. Our statement is as follows.

\begin{theorem} \label{mainC2alphaexistprop}
Let~$\beta, \gamma \in (0, 1)$,~$\Omega \subset \R^n$ be a bounded open set with boundary of class~$C^{2, \beta}$, and~$f \in C^{\beta}_\gamma(\Omega)$. Then, there exists a unique solution~$u \in C^2(\Omega) \cap C^0(\overline{\Omega})$ of
\begin{equation} \label{localDirprobforu}
\left\lbrace\begin{aligned}
- \Delta u &= f && \quad \mbox{in } \Omega, \\
u &= 0 && \quad \mbox{on } \partial \Omega.
\end{aligned}\right.
\end{equation}
Moreover,~$u \in C^{2, \beta}_{-1,\gamma}(\Omega)$ and it satisfies
\begin{equation} \label{mainC2alphaest}
\| u \|_{C^{2, \beta}_{-1,\gamma}(\Omega)} \le C \| f \|_{C^{\beta}_{\gamma}(\Omega)},
\end{equation}
for some constant~$C > 0$ depending only on~$n$,~$\beta$,~$\gamma$, and~$\Omega$.
\end{theorem}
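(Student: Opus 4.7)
The plan is to construct $u$ as the limit of classical solutions to bounded approximations of $f$, using uniform weighted Schauder estimates and the compact embedding from Lemma~\ref{compactembedlem}; uniqueness in $C^2(\Omega)\cap C^0(\overline\Omega)$ will follow at the end from the weak maximum principle for harmonic functions. First I would pick cutoffs $\eta_k\in C^\infty_c(\Omega)$ with $\eta_k=1$ on $\{d_x>2/k\}$, $\supp\eta_k\subset\{d_x>1/k\}$, and $|D\eta_k|\le Ck$, arranged so that $\|\eta_k f\|_{C^\beta_\gamma(\Omega)}\le C\|f\|_{C^\beta_\gamma(\Omega)}$ uniformly in $k$ (the weight $d^\gamma$ absorbs the $O(k)$ gradient of $\eta_k$, which is concentrated on the $O(1/k)$-wide collar). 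Since $\eta_kf\in C^\beta(\overline\Omega)$, classical Schauder theory on the $C^{2,\beta}$ domain $\Omega$ produces $u_k\in C^{2,\beta}(\overline\Omega)$ solving $-\Delta u_k=\eta_kf$ in $\Omega$ with $u_k=0$ on $\partial\Omega$.

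For the $C^0_1$ bound, I would compare $u_k$ with a suitable supersolution. The problem $-\Delta\phi=d^{-\gamma}$ in $\Omega$, $\phi=0$ on $\partial\Omega$, admits a unique weak solution $\phi\in W^{2,p}(\Omega)\cap C^0(\overline\Omega)$ for every $p<1/\gamma$; combining Hopf's lemma with standard $W^{2,p}$ boundary regularity (or, alternatively, a direct barrier built from the signed distance on a collar neighborhood of $\partial\Omega$, where $d$ is $C^{2,\beta}$) gives $\phi\le Cd$ in $\Omega$. The maximum principle then yields $|u_k|\le\|f\|_{C^0_\gamma(\Omega)}\phi\le C\|f\|_{C^\beta_\gamma(\Omega)}\,d$, uniformly in $k$, which is the $C^0_1$ half of the desired estimate.

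The weighted Hessian bound is the crux. For $x_0\in\Omega$ and $r:=d_{x_0}/2$, I would subtract the affine Taylor polynomial of $u_k$ at $x_0$ and rescale by the correct power of $r$, setting
\[
w(y):=\frac{u_k(x_0+ry)-u_k(x_0)-r\,Du_k(x_0)\cdot y}{r^{2-\gamma}}, \qquad y\in B_1.
\]
Then $-\Delta_y w(y)=\frac{r^\gamma}{4}(\eta_kf)(x_0+ry)$, whose $C^\beta(B_{1/2})$ norm is controlled by $C\|f\|_{C^\beta_\gamma(\Omega)}$, since $d_z\asymp r$ throughout the physical ball $B_r(x_0)$. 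The genuine difficulty is bounding $\|w\|_{L^\infty(B_1)}$ uniformly: the bound $|u_k|\le Cd$ of the previous step only yields $d_{x_0}|D^2u_k(x_0)|\le C$ via unscaled Schauder, which is strictly weaker than the target $d_{x_0}^\gamma|D^2u_k(x_0)|\le C$. To extract the sharper decay I would run a Campanato-type iteration on dyadic balls shrinking toward the nearest boundary point of $x_0$: approximating $u_k$ at each scale by its best affine fit, the equation produces a geometric improvement of the affine-approximation error per scale, which sums to $\|w\|_{L^\infty(B_1)}\le C\|f\|_{C^\beta_\gamma(\Omega)}$. Once this is in hand, classical interior Schauder applied to $w$ gives $\|w\|_{C^{2,\beta}(B_{1/2})}\le C\|f\|_{C^\beta_\gamma(\Omega)}$, which unscales to $d_{x_0}^\gamma|D^2u_k(x_0)|\le C\|f\|_{C^\beta_\gamma(\Omega)}$; the matching weighted H\"older seminorm of $D^2u_k$ is obtained analogously for pairs $(x,y)$ with $|x-y|\le d_{x,y}/4$, while the opposite regime is absorbed into the pointwise Hessian bound.

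Combining the previous bounds with the gradient interpolation from the proof of Lemma~\ref{interpollem} controls $\|Du_k\|_{L^\infty(\Omega)}$, so $\{u_k\}$ is uniformly bounded in $C^{2,\beta}_{\gamma,\star}(\Omega)$. Lemma~\ref{compactembedlem} then extracts a subsequence converging in $C^2_1(\Omega)$ to some $u\in C^{2,\beta}_{\gamma,\star}(\Omega)$ satisfying~\eqref{mainC2alphaest}. Since $\eta_kf\to f$ locally uniformly in $\Omega$, $u$ solves~\eqref{localDirprobforu}, and the $C^0_1$ bound forces $u$ to continuously extend by zero to $\partial\Omega$. The dominant technical obstacle is the Campanato iteration producing the $L^\infty$ bound on $w$: scaled Schauder alone, applied to $u_k$ directly without the affine subtraction, only detects the $d^{-1}$ blow-up rate of $|D^2u_k|$, not the target $d^{-\gamma}$ one.
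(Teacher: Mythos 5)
Your proposal is correct in strategy but takes a genuinely different route through its core analytic step. The outer structure coincides with the paper's: cut off $f$ near $\partial\Omega$ (the paper uses $f_k=\eta(kd)f$ and proves the same uniform bound $\|f_k\|_{C^\beta_\gamma}\le C\|f\|_{C^\beta_\gamma}$ that you assert for $\eta_k f$), derive a uniform a priori bound in $C^{2,\beta}_{\gamma,\star}(\Omega)$ for the classical solutions $u_k$, and pass to the limit via Lemma~\ref{compactembedlem}; uniqueness is the weak maximum principle in both cases. Where you diverge is in how the uniform weighted bound is produced. The paper is potential-theoretic: it flattens the boundary, odd-reflects $f$ across the flat piece, estimates the Newtonian potential of the reflection directly in the weighted norms (Lemma~\ref{Newtonpotlem}, Proposition~\ref{C2alphacyllem}), and globalizes by a covering argument (Theorem~\ref{aprioriC2alphaestlem}), with the $L^\infty$ input supplied by the barrier of Lemma~\ref{Linftybarrierlem}. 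You instead aim for a pointwise expansion $|u_k(x)-\ell_{\bar x}(x)|\le C\|f\|_{C^\beta_\gamma}|x-\bar x|^{2-\gamma}$ at each boundary point $\bar x$, obtained by a Campanato iteration (harmonic replacement improves the affine approximation quadratically per scale, the forcing contributes $\rho^{2}\cdot\rho^{-\gamma}=\rho^{2-\gamma}$ per scale, and since $2-\gamma<2$ the series closes), followed by rescaled interior Schauder on $B_{d_{x_0}/2}(x_0)$. This is a standard and correct mechanism for weighted Schauder estimates, and you rightly identify it as the crux: without the affine subtraction the rescaling only detects the trivial rate $d^{-1}$. The trade-off is that the paper's computation is explicit and self-contained but long, while your iteration is quicker to state but leaves real work unwritten: the harmonic replacement in $B_\rho(\bar x)\cap\Omega$, the $O(\rho^2)$ errors from boundary curvature (harmless since $2>2-\gamma$, but they must be tracked), the barrier controlling the forcing term at each scale, and the convergence of the affine maps $\ell_j$.

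One caveat on your $C^0_1$ step: the first route you mention ($\phi\in W^{2,p}$ for $p<1/\gamma$ plus Hopf) does not deliver the upper bound $\phi\le Cd$ — Hopf's lemma gives a lower bound, and for $\gamma$ close to $1$ one only has $p$ close to $1$, so $W^{2,p}$ does not embed into $C^{0,1}$. Your parenthetical alternative is the right one: an explicit collar supersolution of the form $C_1 d-C_2 d^{2-\gamma}$ (for which $-\Delta(d^{2-\gamma})=-(2-\gamma)(1-\gamma)d^{-\gamma}+O(d^{1-\gamma})$ on the set where $d$ is $C^2$), glued to the global $L^\infty$ bound in the interior; this is essentially the barrier of Lemma~\ref{Linftybarrierlem}. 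With that correction, and the Campanato step written out in full, your argument closes.
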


Due to its rather classical flavor, Theorem~\ref{mainC2alphaexistprop} is presumably stated somewhere in the literature and probably known to the expert reader. However, since we could not find an exact reference, we provide all the details of its proof---which takes great inspiration from~\cite{GT01}*{Sections~4 and~6}.

To begin with, observe that, when~$n = 1$, the equation~$- \Delta u = f$ is an ODE and the claims of Theorem~\ref{mainC2alphaexistprop} can be readily established by integration. In the following, we thus restrict to the case~$n \ge 2$. In addition, we assume without loss of generality~$\Omega$ to actually be a domain, \textit{i.e.}, a connected open set.

Let~$\Gamma$ be the fundamental solution for the (minus) Laplacian, \textit{i.e.},
$$
\Gamma(z) \coloneqq \begin{dcases}
- \frac{1}{2\pi} \log |z| & \quad \mbox{if } n = 2, \\
\frac{1}{n (n - 2) |B_1|} \frac{1}{|z|^{n - 2}} & \quad \mbox{if } n \ge 3.
\end{dcases}
$$
For~$r > 0$, we write~$\CC_r \coloneqq B_r' \times (-r, r)$ and~$\CC_r^+ \coloneqq B_r' \times (0, r)$,
see Figure~\ref{fig:cilindretto}.
We also set~$\CC_r(x) \coloneqq x + \CC_r$ and~$\DD_r \coloneqq B_r' \times \{ 0 \}$. 
Given a function~$f: \CC_{r}^+ \to \R$, we denote with~$f^\circ$ its odd reflection with respect to the horizontal hyperplane, \textit{i.e.}, the function defined at a.e.~$x = (x', x_n) \in \CC_{r}$ by
$$
f^\circ(x) \coloneqq \begin{cases}
f(x', x_n) & \quad \mbox{if } x_n > 0, \\
- f(x', - x_n) & \quad \mbox{if } x_n < 0.
\end{cases}
$$

\begin{figure}
\centering
\includegraphics[scale=1]{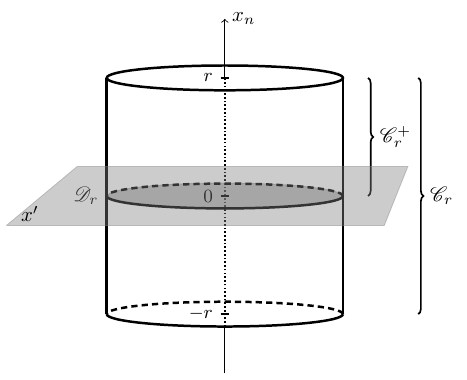}
\caption{A depiction of the notations used in Lemma~\ref{Newtonpotlem} and
Proposition~\ref{C2alphacyllem}.}\label{fig:cilindretto}
\end{figure}

In the following lemma, we explore the regularity properties of the Newtonian potential of~$f^\circ$ near the part of the boundary of~$\CC_{2 r}^+$ constituted by the disk~$\DD_r$. In order to do this, we need some more notation. 

For~$\beta, \gamma \in (0, 1)$, we define the spaces
\begin{align*}
\widetilde{C}_\gamma^\beta(\CC_r^+) & \coloneqq \left\{ f \in C^\beta_\loc(\CC_r^+) : \| f \|_{\widetilde{C}^\beta_\gamma(\CC_r^+)} < +\infty \right\}, \\
\widetilde{C}_{-1,\gamma}^{2, \beta}(\CC_r^+) & \coloneqq \left\{ w \in C^{2, \beta}_\loc(\CC_r^+) : \| w \|_{\widetilde{C}_{-1,\gamma}^{2, \beta}(\CC_r^+)} < +\infty \right\},
\end{align*}
respectively endowed with the norms
\begin{align*}
\| f \|_{\widetilde{C}^\beta_\gamma(\CC_r^+)} & \coloneqq \sup_{x \in \CC_{r}^+} \Big( {x_n^\gamma \, |f(x)|} \Big) + \sup_{\substack{x, y \in \CC_{r}^+\\ x \ne y}} \left( \min \{ x_n, y_n \}^{\beta + \gamma} \, \frac{|f(x) - f(y)|}{|x - y|^\beta} \right), \\
\| w \|_{\widetilde{C}_{-1,\gamma}^{2, \beta}(\CC_r^+)} & \coloneqq \sup_{x \in \CC_r^+} \Big( {x_n^{-1} |w(x)|} \Big) + \| Dw \|_{L^\infty(\CC_r^+)} + \| D^2 w \|_{\widetilde{C}^\beta_\gamma(\CC_r^+)}.
\end{align*}
Note that each function in~$\widetilde{C}_{-1,\gamma}^{2, \beta}(\CC_r^+)$ can be uniquely extended to a function in~$C^{0, 1}(\CC_r^+ \cup \DD_r)$, vanishing on~$\DD_r$. We then have the following result.

\begin{lemma} \label{Newtonpotlem}
Let~$\beta, \gamma \in (0, 1)$ and~$f \in \widetilde{C}^\beta_\gamma(\CC_{2}^+)$. Then, the function
\[
w(x) \coloneqq \int_{\CC_{2}} \Gamma(x - y) f^\circ(y) \, dy \qquad \mbox{for } x \in \CC_2^+ \cup \DD_2,
\]
satisfies~$- \Delta w = f$ in~$\CC_2^+$,~$w = 0$ on~$\DD_2$, and it belongs to~$\widetilde{C}_{-1,\gamma}^{2, \beta}(\CC_1^+)$, with
\[
\| w \|_{\widetilde{C}^{2, \beta}_{-1,\gamma}(\CC_1^+)} \le C \| f \|_{\widetilde{C}^\beta_\gamma(\CC_{2}^+)},
\]
for some constant~$C > 0$ depending only on~$n$,~$\beta$, and~$\gamma$.
\end{lemma}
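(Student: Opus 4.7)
My strategy is to exploit that $w$ is the Newtonian potential of the odd reflection $f^\circ$, so that all the basic information is encoded in the classical theory, and then upgrade from crude estimates to the weighted ones by an affine subtraction trick in interior Schauder estimates.

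First, I would verify the elementary properties. Since $\gamma \in (0,1)$, the pointwise bound $|f^\circ(y)| \le \|f\|_{\widetilde{C}^\beta_\gamma(\CC_2^+)} |y_n|^{-\gamma}$ makes $f^\circ$ integrable on $\CC_{2}$, so $w$ is a well-defined function of class $C^1$ on $\CC_2$ and satisfies $-\Delta w = f^\circ$ classically on $\CC_2 \setminus \DD_2$, by standard Newtonian potential theory (here $f^\circ$ is $C^\beta_\mathrm{loc}$ off $\DD_2$). Since $f^\circ$ is odd in the last coordinate and $\Gamma$ is even, the change of variable $y_n \mapsto -y_n$ shows that $w$ is odd in $x_n$; hence $w \equiv 0$ on $\DD_2$ and $-\Delta w = f$ in $\CC_2^+$. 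A direct inspection of $\int |\Gamma(x-y)||y_n|^{-\gamma}\, dy$ and $\int |D\Gamma(x-y)||y_n|^{-\gamma}\, dy$ (splitting into regions where $|x-y|$ is large or small and using the integrability of $y_n^{-\gamma}$) yields the uniform bounds $\|w\|_{L^\infty(\CC_{3/2}^+)} + \|Dw\|_{L^\infty(\CC_{3/2}^+)} \le C \|f\|_{\widetilde{C}^\beta_\gamma(\CC_2^+)}$; combined with the vanishing on $\DD_2$, this gives $|w(x)| \le C x_n \|f\|_{\widetilde{C}^\beta_\gamma(\CC_2^+)}$.

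Next, I would establish the intermediate regularity statement that $Dw \in C^{0,1-\gamma}(\CC_{5/4}^+)$ with seminorm controlled by $\|f\|_{\widetilde{C}^\beta_\gamma(\CC_2^+)}$. Given $x, \bar x \in \CC_{5/4}^+$ and $\rho \coloneqq |x-\bar x|$, split the difference $Dw(x) - Dw(\bar x) = \int_{\CC_2} (D\Gamma(x-y) - D\Gamma(\bar x -y)) f^\circ(y)\, dy$ into the near region $\{|y-x| < 2\rho\}$, handled by the bound $|D\Gamma(z)| \le C|z|^{1-n}$ together with integrability of $y_n^{-\gamma}$ (which, since $\gamma < 1$, produces a contribution of order $\rho^{1-\gamma}\|f\|$), and the far region $\{|y-x| \ge 2\rho\}$, handled by the mean-value bound $|D\Gamma(x-y) - D\Gamma(\bar x -y)| \le C \rho |x-y|^{-n}$ (which yields the same order after a careful estimate, the logarithmic divergence being absorbed by the weight $y_n^{-\gamma}$ with $\gamma > 0$).

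With these preparations, I would conclude via interior Schauder estimates with a linear subtraction. For $x_0 \in \CC_1^+$ and $d \coloneqq x_{0,n}$, set $L_{x_0}(x) \coloneqq w(x_0) + Dw(x_0)\cdot(x-x_0)$; then $v \coloneqq w - L_{x_0}$ satisfies $-\Delta v = f$ in $B_{d/4}(x_0) \subset \CC_2^+$, has the same Hessian as $w$, and, thanks to the Hölder estimate on $Dw$ from the previous step, enjoys the refined bound $\|v\|_{L^\infty(B_{d/4}(x_0))} \le C d^{2-\gamma}\|f\|_{\widetilde{C}^\beta_\gamma(\CC_2^+)}$. The scale-invariant interior Schauder estimate for the Laplacian then gives
\[
d^2 \|D^2 w\|_{L^\infty(B_{d/8}(x_0))} + d^{2+\beta}[D^2 w]_{C^\beta(B_{d/8}(x_0))} \le C \left( d^{2-\gamma} + d^{2+\beta} d^{-\beta-\gamma} + d^{2} d^{-\gamma}\right)\|f\|_{\widetilde{C}^\beta_\gamma(\CC_2^+)},
\]
which, after dividing by $d^2$ and $d^{2+\beta}$ respectively, yields the pointwise bounds $|D^2 w(x_0)| \le C x_{0,n}^{-\gamma}\|f\|$ and the local Hölder bound $[D^2 w]_{C^\beta(B_{d/8}(x_0))} \le C d^{-\beta-\gamma}\|f\|$. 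A standard chaining argument comparing Hessians at two generic points $x,y \in \CC_1^+$ with $d_{x,y}$ in place of $d$ (distinguishing the cases $|x-y| \le d_{x,y}/16$ and $|x-y| > d_{x,y}/16$, in the latter using the pointwise $D^2 w$ bound) converts this into the weighted seminorm control and completes the proof.

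The main obstacle is precisely the mismatch visible in the displayed inequality above: a direct application of interior Schauder to $w$ itself, with the natural Lipschitz bound $\|w\|_{L^\infty(B_{d/4}(x_0))} \le C d \|f\|$, only produces $|D^2 w(x_0)| \le C d^{-1}\|f\|$, which is strictly weaker than the target $d^{-\gamma}\|f\|$ when $\gamma < 1$. The crucial input is therefore the preliminary $C^{0,1-\gamma}$ bound on $Dw$ of the second paragraph, which allows the affine correction $L_{x_0}$ to reduce the $L^\infty$-norm by an extra factor $d^{1-\gamma}$; proving this Hölder estimate on $Dw$ directly from the Newtonian integral representation (without circularly assuming $w \in C^2_{\gamma,\star}$) is the most delicate point.
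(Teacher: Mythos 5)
Your proposal is correct, but it follows a genuinely different route from the paper. The paper estimates the second derivatives directly from the classical representation formula for the Hessian of a Newtonian potential (the decomposition $D_{ij}w = I_1 + I_2 + I_3$ of \cite{GT01}*{Lemma~4.2} with $K = \CC_{x_n/2}(x)$, and then a seven-term splitting $J_1,\dots,J_7$ for the H\"older increment), carrying the weight $|y_n|^{-\gamma}$ through each integral. You instead stop the potential-theoretic work at first order: you prove the intermediate bound $[Dw]_{C^{0,1-\gamma}} \le C\|f\|_{\widetilde{C}^\beta_\gamma}$ from the integral representation of $Dw$, and then obtain all second-order information from the rescaled interior Schauder estimate applied to $w - L_{x_0}$ on $B_{d/4}(x_0)$, where the affine subtraction converts the gradient H\"older bound into the improved $L^\infty$ bound $Cd^{2-\gamma}\|f\|$ that makes the exponents come out right; a standard dichotomy $|x-\tilde x| \lessgtr c\,d_{x,\tilde x}$ then assembles the weighted seminorm. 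Your approach trades the paper's lengthy term-by-term Hessian estimates for a shorter argument resting on a first-order estimate plus off-the-shelf Schauder theory, and you correctly identify both the crux (without the affine correction one only gets $|D^2w| \lesssim d^{-1}$) and the delicate point (the gradient estimate must be derived non-circularly from the potential). The only steps left implicit are routine but worth recording: in the near region of the gradient increment the $y'$-integration of $|D\Gamma|$ produces a logarithm in $|y_n - x_n|$ that must be absorbed via $\log(2+\rho/a) \le C_\varepsilon(\rho/a)^\varepsilon$ with $\varepsilon < 1-\gamma$, and in the far region the dyadic sum $\sum_j (2^j\rho)^{-\gamma}$ converges precisely because $\gamma > 0$ — both consistent with what you flagged.
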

\begin{proof}
First of all, it is classical that~$w \in C^{2, \beta}_\loc(\CC_2^+)$ and that it satisfies~$- \Delta w = f$ in~$\CC_2^+$.

The uniform~$C^1$ bounds are rather straightforward. Indeed, for~$n \ge 3$ and~$x \in \CC_1^+ \cup \DD_1$, by changing variables appropriately we have
\begin{equation} \label{wLinftybound}
\begin{aligned}
|w(x)| & \le C \sup_{z \in \CC_{2}^+} \Big( {z_n^\gamma \, |f(z)|} \Big) \int_{- 2}^{2} \Bigg( {\int_{B_{2}'} \frac{dy'}{\big( {|x' - y'|^2 + (x_n - y_n)^2} \big)^{\frac{n - 2}{2}}}} \Bigg) \frac{dy_n}{|y_n|^\gamma} \\
& \le C \| f \|_{\widetilde{C}^\beta_\gamma(\CC_{2}^+)} \int_{- 2}^{2} \left( \int_0^{\frac{3}{|x_n - y_n|}} \frac{t^{n - 2}}{(1 + t)^{n - 2}} \, dt \right) \frac{|x_n - y_n|}{|y_n|^\gamma} \, dy_n \le C \| f \|_{\widetilde{C}^\beta_\gamma(\CC_{2}^+)},
\end{aligned}
\end{equation}
for some constant~$C > 0$ depending only on~$n$ and~$\gamma$. After some tedious computations, one similarly handles the case~$n = 2$ and the estimate for the gradient of~$w$. From this it follows in particular that~$w$ is continuous in the whole~$\CC_1^+ \cup \DD_1$.

The fact that~$w = 0$ on~$\DD_1$ stems from symmetry considerations. From this and the bound for the gradient, we can also improve~\eqref{wLinftybound} to the desired weighted~$L^\infty$ bound.

To estimate the Hessian of~$w$, we take advantage of~\cite{GT01}*{Lemma~4.2} and write
\begin{equation} \label{hessassum}
D_{i j} w(x) = I_1(x) + I_2(x) + I_3(x),
\end{equation}
for~$x \in \CC_1^+$ and~$i, j \in \{ 1, \ldots, n \}$, where
\begin{equation} \label{Ijsdefs}
\begin{aligned}
I_1(x) & \coloneqq \int_{\CC_{2} \setminus K} D_{i j} \Gamma(x - y) f^\circ(y) \, dy, \\
I_2(x) & \coloneqq \int_{K} D_{i j} \Gamma(x - y) \big( {f(y) - f(x)} \big) \, dy, \\
I_3(x) & \coloneqq - f(x) \int_{\partial K} D_i \Gamma(x - y) \nu_j(y) \, d\mathcal{H}^{n - 1}(y),
\end{aligned}
\end{equation}
and~$K \subset \subset \CC_{2}^+$ is any open set with Lipschitz boundary containing~$x$. Choosing~$K = \CC_{\frac{x_n}{2}}(x)$, we have
\begin{equation} \label{I1est}
\begin{aligned}
|I_1(x)| & \le C \| f \|_{\widetilde{C}^\beta_\gamma(\CC_{2}^+)} \left\{ \vphantom{ \int_{\frac{x_n}{2}}^{\frac{3 x_n}{2}} \Bigg( {\int_{B_{2}' \setminus B_{\frac{x_n}{2}}'(x')} \frac{dy'}{\big( {|x' - y'|^2 + (x_n - y_n)^2} \big)^{\frac{n}{2}}}} \Bigg) \frac{dy_n}{|y_n|^\gamma}} \int_{(- 2, 2) \setminus \left[ \frac{x_n}{2}, \frac{3 x_n}{2} \right]} \left( \int_{B_{2}'} \frac{dy'}{\big( {|x' - y'|^2 + (x_n - y_n)^2} \big)^{\! \frac{n}{2}}} \right) \frac{dy_n}{|y_n|^\gamma} \right. \\
& \quad \left. + \int_{\frac{x_n}{2}}^{\frac{3 x_n}{2}} \Bigg( {\int_{B_{2}' \setminus B_{\frac{x_n}{2}}'(x')} \frac{dy'}{\big( {|x' - y'|^2 + (x_n - y_n)^2} \big)^{\! \frac{n}{2}}}} \Bigg) \frac{dy_n}{y_n^\gamma} \right\} \\
& \le C \| f \|_{\widetilde{C}^\beta_\gamma(\CC_{2}^+)} \left\{ \int_{\frac{x_n}{2}}^{3} \left( \int_0^{\frac{3}{\ell}} \frac{t^{n - 2}}{(1 + t)^n} \, dt \right) \frac{d\ell}{\ell |\ell - x_n|^\gamma} + \int_0^{\frac{x_n}{2}} \left( \int_{\frac{x_n}{2 \ell}}^{\frac{3}{\ell}} \frac{t^{n - 2}}{(1 + t)^n} \, dt \right) \frac{d\ell}{\ell^{1 + \gamma}} \right\} \\
& \le C \| f \|_{\widetilde{C}^\beta_\gamma(\CC_{2}^+)} \left\{ x_n^{- \gamma} \int_{\frac{1}{2}}^{+\infty} \frac{dm}{m|m - 1|^\gamma} + x_n^{-1} \int_0^{\frac{x_n}{2}} \frac{d\ell}{\ell^{\gamma}} \right\} \le C \| f \|_{\widetilde{C}^\beta_\gamma(\CC_{2}^+)} x_n^{-\gamma}
\end{aligned}
\end{equation}
and
\begin{align*}
|I_2(x)| & \le C \| f \|_{\widetilde{C}^\beta_\gamma(\CC_{2}^+)} x_n^{- \beta - \gamma} \int_{\frac{x_n}{2}}^{\frac{3 x_n}{2}} \left( \int_{B'_{\frac{x_n}{2}}(x')} \frac{dy'}{\big( {|x' - y'|^2 + (x_n - y_n)^2} \big)^{\! \frac{n - \beta}{2}}} \right) dy_n \\
& \le C \| f \|_{\widetilde{C}^\beta_\gamma(\CC_{2}^+)} x_n^{- \beta - \gamma} \int_{0}^{\frac{x_n}{2}} \left( \int_0^{\frac{x_n}{2 \ell}} \frac{t^{n - 2}}{(1 + t)^{n - \beta}} \, dt \right) \frac{d\ell}{\ell^{1 - \beta}} \le C \| f \|_{\widetilde{C}^\beta_\gamma(\CC_{2}^+)} x_n^{- \gamma}.
\end{align*}
It is immediate to verify that the same bound also holds for~$|I_3(x)|$. Hence, we conclude that
\begin{equation} \label{hessLinftyest}
|D^2 w(x)| \le C \| f \|_{\widetilde{C}^\beta_\gamma(\CC_{2}^+)} x_n^{- \gamma} \quad \mbox{for every } x \in \CC_1^+.
\end{equation}

We now claim that
\begin{equation} \label{hessCalphaest}
\min \{ x_n, \tilde{x}_n \}^{\beta + \gamma} \frac{\big| {D^2 w(x) - D^2 w(\tilde{x})} \big|}{|x - \tilde{x}|^\beta} \le C \| f \|_{\widetilde{C}^\beta_\gamma(\CC_{2}^+)} \quad \mbox{for every } x, \tilde{x} \in \CC_1^+ \mbox{ such that } x \ne \tilde{x}.
\end{equation}
Clearly, it is enough to establish~\eqref{hessCalphaest} for~$x, \tilde{x} \in \CC_1^+$ with~$\tilde{x}_n \ge x_n$. Also, in view of~\eqref{hessLinftyest}, we may assume that~$|x - \tilde{x}| \le \frac{x_n}{4}$. By expressing~$D_{i j} w(x)$ and~$D_{i j} w(\tilde{x})$ via~\eqref{hessassum} and~\eqref{Ijsdefs} with~$K = \CC_{\frac{x_n}{2}}(x)$ in both cases, we get that
\begin{equation} \label{D2incremdeco}
\big| {D_{i j} w(x) - D_{i j} w(\tilde{x})} \big| \le \sum_{i = 1}^7  J_i(x, \tilde{x}),
\end{equation}
where
\begin{align*}
J_1(x, \tilde{x}) & \coloneqq \int_{\CC_{2} \setminus \CC_{\frac{x_n}{2}}(x)} \big| {D_{i j} \Gamma(x - y) - D_{i j} \Gamma(\tilde{x} - y)} \big| |f^\circ(y)| \, dy, \\
J_2(x, \tilde{x}) & \coloneqq \int_{\CC_{\frac{x_n}{2}}(x) \setminus B_{|x - \tilde{x}|} \left( \frac{x + \tilde{x}}{2} \right)} \big| {D_{i j} \Gamma(x - y) - D_{i j} \Gamma(\tilde{x} - y)} \big| \big| {f(y) - f(x)} \big| \, dy, \\
J_3(x, \tilde{x}) & \coloneqq \big| {f(x) - f(\tilde{x})} \big| \left| \int_{\CC_{\frac{x_n}{2}}(x) \setminus B_{|x - \tilde{x}|} \left( \frac{x + \tilde{x}}{2} \right)} {D_{i j} \Gamma(\tilde{x} - y)} \, dy \right|, \\
J_4(x, \tilde{x}) & \coloneqq \int_{B_{|x - \tilde{x}|} \left( \frac{x + \tilde{x}}{2} \right)} \big| {D_{i j} \Gamma(x - y)} \big| \big| {f(y) - f(x)} \big| \, dy, \\
J_5(x, \tilde{x}) & \coloneqq \int_{B_{|x - \tilde{x}|} \left( \frac{x + \tilde{x}}{2} \right)} \big| {D_{i j} \Gamma(\tilde{x} - y)} \big| \big| {f(y) - f(\tilde{x})} \big| \, dy, \\
J_6(x, \tilde{x}) & \coloneqq |f(x) - f(\tilde{x})| \int_{\partial \CC_{\frac{x_n}{2}}(x)} \big| {D_i \Gamma(x - y)} \big| |\nu_j(y)| \, d\mathcal{H}^{n - 1}(y), \\
J_7(x, \tilde{x}) & \coloneqq |f(\tilde{x})| \int_{\partial \CC_{\frac{x_n}{2}}(x)} \big| {D_i \Gamma(x - y) - D_i \Gamma(\tilde{x} - y)} \big| |\nu_j(y)| \, d\mathcal{H}^{n - 1}(y).
\end{align*}
We now estimate each of these terms. Observe that, as~$|x - \tilde{x}| \le \frac{x_n}{4}$, for every~$y \in \CC_{2} \setminus \CC_{\frac{x_n}{2}}(x)$ we have
\begin{equation} \label{D2Gamma-D2Gamma}
\big| {D^2 \Gamma(x - y) - D^2 \Gamma(\tilde{x} - y)} \big| \le C |x - \tilde{x}| \int_0^1 \frac{dt}{|t x + (1 - t) \tilde{x} - y|^{n + 1}} \le C \frac{|x - \tilde{x}|}{|x - y|^{n + 1}}.
\end{equation}
Thus, computing as in~\eqref{I1est}, we easily obtain that
\begin{equation} \label{J1est}
J_1(x, \tilde{x}) \le C \| f \|_{\widetilde{C}^\beta_\gamma(\CC_{2}^+)} |x - \tilde{x}| \int_{\CC_{2} \setminus \CC_{\frac{x_n}{2}}(x)} \frac{dy}{|x - y|^{n + 1} |y_n|^\gamma} \le C \| f \|_{\widetilde{C}^\beta_\gamma(\CC_{2}^+)} x_n^{- 1 - \gamma} |x - \tilde{x}|.
\end{equation}
A calculation similar to~\eqref{D2Gamma-D2Gamma} yields
$$
\big| {D^2 \Gamma(x - y) - D^2 \Gamma(\tilde{x} - y)} \big| \le C \frac{|x - \tilde{x}|}{\left| y - \frac{x + \tilde{x}}{2} \right|^{n + 1}} \quad \mbox{for every } y \in \CC_{\frac{x_n}{2}}(x) \setminus B_{|x - \tilde{x}|} \left( \frac{x + \tilde{x}}{2} \right).
$$
From this and the fact that
$$
\big| {f(y) - f(x)} \big| \le C \| f \|_{\widetilde{C}^\beta_\gamma(\CC_{2}^+)} x_n^{- \beta - \gamma} \left| y - \frac{x + \tilde{x}}{2} \right|^\beta  \quad \mbox{for every } y \in \CC_{\frac{x_n}{2}}(x) \setminus B_{|x - \tilde{x}|} \left( \frac{x + \tilde{x}}{2} \right),
$$
changing variables we get
\begin{equation} \label{J2est}
J_2(x, \tilde{x}) \le C \| f \|_{\widetilde{C}^\beta_\gamma(\CC_{2}^+)} x_n^{- \beta - \gamma} |x - \tilde{x}| \int_{\R^n \setminus B_{|x - \tilde{x}|}} \frac{dz}{|z|^{n + 1 - \beta}} \le C \| f \|_{\widetilde{C}^\beta_\gamma(\CC_{2}^+)} x_n^{- \beta - \gamma} |x - \tilde{x}|^\beta.
\end{equation}
To estimate~$J_3$, we employ the divergence theorem to write
\begin{align*}
\int_{\CC_{\frac{x_n}{2}}(x) \setminus B_{|x - \tilde{x}|} \left( \frac{x + \tilde{x}}{2} \right)} {D_{i j} \Gamma(\tilde{x} - y)} \, dy = & - \int_{\partial \CC_{\frac{x_n}{2}}(x)} D_i \Gamma(\tilde{x} - y) \nu_j(y) \, d\mathcal{H}^{n - 1}(y) \\
& + \int_{\partial B_{|x - \tilde{x}|} \left( \frac{x + \tilde{x}}{2} \right)} D_i \Gamma(\tilde{x} - y) \nu_j(y) \, d\mathcal{H}^{n - 1}(y).
\end{align*}
Since both surface integrals are bounded uniformly with respect to~$x$ and~$\tilde{x}$, we infer that
\begin{equation} \label{J3est}
J_3(x, \tilde{x}) \le C \| f \|_{\widetilde{C}^\beta_\gamma(\CC_{2}^+)} x_n^{- \beta - \gamma} |x - \tilde{x}|^\beta.
\end{equation}
The terms~$J_4$ and~$J_5$ can be dealt with at once. Using that
\[
B_{|x - \tilde{x}|} \Big( \frac{x + \tilde{x}}{2} \Big) \ \subset\ 
B_{2 |x - \tilde{x}|}(x) \cap B_{2 |x - \tilde{x}|}(\tilde{x}),
\]
we estimate
\begin{equation} \label{J4+J5est}
J_4(x, \tilde{x}) + J_5(x, \tilde{x}) \le C \| f \|_{\widetilde{C}^\beta_\gamma(\CC_{2}^+)} x_n^{- \beta - \gamma} \int_{B_{2 |x - \tilde{x}|}} \frac{dz}{|z|^{n - \beta}} \le C \| f \|_{\widetilde{C}^\beta_\gamma(\CC_{2}^+)} x_n^{- \beta - \gamma} |x - \tilde{x}|^\beta.
\end{equation}
The analogous estimate for~$J_6$ follows by noticing that the surface integral which defines it is bounded, while that for~$J_7$ is a simple consequence of the bound
\[
\big| {D \Gamma(x - y) - D \Gamma(\tilde{x} - y)} \big| \le C \, \frac{|x - \tilde{x}|}{x_n^n} \quad \mbox{for every } y \in \partial \CC_{\frac{x_n}{2}}(x).
\]
From these observations,~\eqref{D2incremdeco},~\eqref{J1est},~\eqref{J2est},~\eqref{J3est}, and~\eqref{J4+J5est}, we conclude that claim~\eqref{hessCalphaest} holds true. The proof of the lemma is thus complete.
\end{proof}

From this, we may easily infer the regularity of the solution of~\eqref{localDirprobforu} near flat portions of the boundary.

\begin{proposition} \label{C2alphacyllem}
Let~$\beta, \gamma \in (0, 1)$,~$r \in (0, 1]$, and~$f \in \widetilde{C}^{\beta}_\gamma(\CC_{2 r}^+)$. Let~$u \in C^2(\CC_{2 r}^+) \cap C^0(\CC_{2 r}^+ \cup \DD_{2 r})$ be a bounded solution of~$- \Delta u = f$ in~$\CC_{2 r}^+$ satisfying~$u = 0$ on~$\DD_{2 r}$. Then,~$u \in \widetilde{C}^{2, \beta}_{-1,\gamma}(\CC_r^+)$ and
\begin{equation} \label{C2alphaestincyl}
\| u \|_{\widetilde{C}^{2, \beta}_{-1,\gamma}(\CC_r^+)} \le C \left( r^{\gamma - 2} \| u \|_{L^\infty(\CC_{2 r}^+)} + \| f \|_{\widetilde{C}^{\beta}_\gamma(\CC_{2 r}^+)} \right),
\end{equation}
for some constant~$C > 0$ depending only on~$n$,~$\beta$, and~$\gamma$.
\end{proposition}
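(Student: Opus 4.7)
The plan is to reduce to the unit scale $r=1$ by a natural rescaling, and then to split $u$ into two contributions: a Newtonian-potential piece, to which Lemma~\ref{Newtonpotlem} applies directly, and a harmonic remainder, which is smooth up to $\DD_1$ thanks to the Schwarz reflection principle.

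First, I set $\tilde u(y) \coloneqq u(ry)$ and $g(y) \coloneqq r^2 f(ry)$ for $y \in \CC_2^+$. Then $-\Delta \tilde u = g$ on $\CC_2^+$ with $\tilde u = 0$ on $\DD_2$, and a change of variables yields $\|g\|_{\widetilde{C}^\beta_\gamma(\CC_2^+)} = r^{2-\gamma}\|f\|_{\widetilde{C}^\beta_\gamma(\CC_{2r}^+)}$ and $\|\tilde u\|_{L^\infty(\CC_2^+)} = \|u\|_{L^\infty(\CC_{2r}^+)}$, while each seminorm entering $\|\cdot\|_{\widetilde{C}_{\gamma,\star}^{2,\beta}}$ scales as $r^1$ on the $C^0$ and $C^1$ pieces and as $r^{2-\gamma}$ on the Hessian pieces. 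Since $2-\gamma>1$ and $r\le 1$, the worst of these factors is $r^{2-\gamma}$, and dividing through by it produces precisely the prefactor $r^{\gamma-2}$ in front of $\|u\|_{L^\infty}$ in~\eqref{C2alphaestincyl}. It thus suffices to establish the estimate for $r=1$.

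At this scale, I write $u = w + h$, where $w$ is the Newtonian potential of the odd reflection $f^\circ$ constructed in Lemma~\ref{Newtonpotlem}. That lemma directly provides $\|w\|_{\widetilde{C}_{\gamma,\star}^{2,\beta}(\CC_1^+)} \le C\|f\|_{\widetilde{C}^\beta_\gamma(\CC_2^+)}$; moreover, the very same computation performed in the proof of the lemma to obtain the $L^\infty$ bound on $\CC_1^+$ also yields $\|w\|_{L^\infty(\CC_{3/2}^+)} \le C\|f\|_{\widetilde{C}^\beta_\gamma(\CC_2^+)}$, as the integral involved only requires $x$ to range in a uniformly bounded set.

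The residual $h \coloneqq u - w$ is then harmonic in $\CC_2^+$, continuous on $\CC_2^+\cup\DD_2$, vanishes on $\DD_2$, and satisfies $\|h\|_{L^\infty(\CC_{3/2}^+)} \le \|u\|_{L^\infty(\CC_2^+)} + C\|f\|_{\widetilde{C}^\beta_\gamma(\CC_2^+)}$. The Schwarz reflection principle makes the odd extension $h^\circ$ harmonic on the full cylinder $\CC_{3/2}$, so standard interior estimates for harmonic functions give $\|h\|_{C^k(\overline{\CC_1^+})} \le C_k\|h^\circ\|_{L^\infty(\CC_{3/2})}$ for every $k\in\N$. Because $h=0$ on $\DD_1$, one has $\sup_{\CC_1^+} x_n^{-1}|h(x)| \le \|Dh\|_{L^\infty(\CC_1^+)}$, and because $x_n\le 1$ on $\CC_1^+$ the weighted Hessian seminorms of $h$ are dominated by the unweighted $C^{2,\beta}(\CC_1^+)$-norm. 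Adding the bound for $w$ to that for $h$ produces~\eqref{C2alphaestincyl} at $r=1$, and the dilation argument recovers the general statement. The main obstacle will be the careful bookkeeping in the rescaling step, as the several pieces of $\|\cdot\|_{\widetilde{C}_{\gamma,\star}^{2,\beta}}$ scale with different powers of $r$.
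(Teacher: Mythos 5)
Your proposal is correct and follows essentially the same route as the paper: the identical rescaling to $r=1$ (with the same observation that $r\le 1$ lets the worst factor $r^{2-\gamma}$ absorb the others), followed by the decomposition of $u$ into the Newtonian potential $w$ of Lemma~\ref{Newtonpotlem} plus a harmonic remainder handled via odd reflection and interior estimates. Your extra care in bounding $\|w\|_{L^\infty}$ on a slightly larger cylinder before applying interior estimates to the remainder is a detail the paper glosses over, but it is the same argument.
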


\begin{proof}
First of all, by scaling we can reduce ourselves to the case~$r = 1$. Indeed, assuming the validity of the proposition for~$r = 1$, we define~$u_r(x) \coloneqq u(r x)$,~$f_r(x) \coloneqq r^2 f(r x)$ for~$x \in \CC_{2}^+ \cup \DD_2$, notice that they satisfy~$- \Delta u_r = f_r$ in~$\CC_2^+$,~$u_r = 0$ on~$\DD_2$, and infer therefore that
$$
\| u_r \|_{\widetilde{C}^{2, \beta}_{-1,\gamma}(\CC_1^+)} \le C \left( \| u_r \|_{L^\infty(\CC_{2}^+)} + \| f_r \|_{\widetilde{C}^{\beta}_\gamma(\CC_{2}^+)} \right).
$$
Rephrasing this inequality back to~$u$ and~$f$, we obtain
$$
r \left\{ \sup_{x \in \CC_r^+} \Big( {x_n^{-1} |u(x)|} \Big) + \| Du \|_{L^\infty(\CC_r^+)} \right\} + r^{2 - \gamma} \| D^2 u \|_{\widetilde{C}^\beta_\gamma(\CC_{r}^+)} \le C \left( \| u \|_{L^\infty(\CC_{2 r}^+)} + r^{2 - \gamma} \| f \|_{\widetilde{C}^\beta_\gamma(\CC_{2 r}^+)} \right),
$$
which gives~\eqref{C2alphaestincyl} since~$r \le 1$.

Let~$w$ be the function introduced in Lemma~\ref{Newtonpotlem} and write~$v \coloneqq u - w$. Clearly,~$v$ is of class~$C^2$ in~$\CC_{2}^+$, continuous up to~$\DD_{2}$, and harmonic in~$\CC_{2}^+$. As~$v$ vanishes on~$\DD_{2}$, we can consider its odd reflection across~$\DD_{2}$ which is harmonic in the whole~$\CC_{2}$. By standard interior estimates for harmonic functions, we deduce that~$v$ is smooth and satisfies~$\| D v \|_{C^2(\CC_1^+)} \le C \| v \|_{L^\infty(\CC_{2}^+)}$ for some dimensional constant~$C > 0$. The estimate in~\eqref{C2alphaestincyl} immediately follows from this and Lemma~\ref{Newtonpotlem}.
\end{proof}

By straightening the boundary, we may deduce from the previous result an \textit{a priori} estimate in~$C^{2, \beta}_{-1,\gamma}(\Omega)$ for solutions of~\eqref{localDirprobforu}. The precise statement is as follows.

\begin{theorem} \label{aprioriC2alphaestlem}
Let~$\beta, \gamma \in (0, 1)$,~$\Omega \subset \R^n$ be a bounded domain with boundary of class~$C^{2, \beta}$, and~$f \in C^{\beta}_\gamma(\Omega)$. Let~$u \in C^{2, \beta}_{-1,\gamma}(\Omega)$ be a solution of~$-\Delta u = f$ in~$\Omega$. Then,
\begin{equation} \label{C2alphamainest}
\| u \|_{C^{2, \beta}_{-1,\gamma}(\Omega)} \le C \left( \| u \|_{L^\infty(\Omega)} + \| f \|_{C^{\beta}_{\gamma}(\Omega)} \right),
\end{equation}
for some constant~$C > 0$ depending only on~$n$,~$\beta$,~$\gamma$, and~$\Omega$.
\end{theorem}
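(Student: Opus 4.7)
My plan is to prove Theorem~\ref{aprioriC2alphaestlem} by combining a boundary-straightening argument with the local half-cylinder estimate provided by Proposition~\ref{C2alphacyllem} and standard interior Schauder estimates for the Poisson equation, patched together via a finite covering of~$\overline{\Omega}$. Since~$\partial\Omega$ is compact and of class~$C^{2,\beta}$, I cover it by finitely many coordinate patches: near each~$\bar{x} \in \partial\Omega$ I select a~$C^{2,\beta}$ diffeomorphism~$\Psi$ mapping~$\CC_{R_0}^+$ onto the part of~$\Omega$ close to~$\bar{x}$ and~$\DD_{R_0}$ onto the corresponding piece of~$\partial\Omega$, normalized so that~$\Psi(0) = \bar{x}$ and~$D\Psi(0) = I$, with~$R_0 > 0$ chosen uniformly in~$\bar{x}$ by compactness. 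The complementary region~$\Omega_\delta \coloneqq \{x \in \Omega : d_x > \delta\}$ is handled by scaled interior Schauder estimates on balls~$B_{d_x/2}(x) \subset \Omega$: using that~$d_y$ is comparable to~$d_x$ on such a ball and a standard covering argument, one translates the pointwise interior estimates into a bound on~$\| u \|_{C^{2,\beta}_{\gamma,\star}(\Omega_\delta)}$ in terms of~$\|u\|_{L^\infty(\Omega)} + \| f \|_{C^\beta_\gamma(\Omega)}$.

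The heart of the argument is then the boundary estimate. Setting~$v \coloneqq u \circ \Psi$, a direct computation shows that~$v$ satisfies
\[
-a^{ij}(z) \, \partial_{ij} v + b^i(z) \, \partial_i v = f(\Psi(z)) \quad \text{in } \CC_{R_0}^+, \qquad v = 0 \text{ on } \DD_{R_0},
\]
with coefficients~$a^{ij}, b^i \in C^\beta(\overline{\CC_{R_0}^+})$, the matrix~$(a^{ij})$ uniformly elliptic, and~$a^{ij}(0) = \delta^{ij}$ thanks to the normalization~$D\Psi(0) = I$. Rewriting the equation as~$-\Delta v = F$ with $F \coloneqq f \circ \Psi + (a^{ij} - \delta^{ij}) \partial_{ij} v - b^i \, \partial_i v$ and invoking Proposition~\ref{C2alphacyllem} for every~$r \in (0, R_0/2]$ yields
\[
\| v \|_{\widetilde{C}^{2, \beta}_{\gamma, \star}(\CC_r^+)} \le C \Big( r^{\gamma - 2} \| v \|_{L^\infty(\CC_{2r}^+)} + \| F \|_{\widetilde{C}^\beta_\gamma(\CC_{2r}^+)} \Big).
\]
The pullback~$f \circ \Psi$ has~$\widetilde{C}^\beta_\gamma(\CC_{2r}^+)$ norm bounded by~$\| f \|_{C^\beta_\gamma(\Omega)}$ up to constants controlled by~$\Omega$. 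For the perturbation terms I plan to exploit the smallness $|a^{ij}(z) - \delta^{ij}| \le C r^\beta$ on~$\CC_{2r}^+$, the very structure of the weighted~$\widetilde{C}^\beta_\gamma$ norm (which is tailored to compensate the boundary blow-up of~$D^2 v$), and the half-cylinder analogue of Lemma~\ref{C2alphaisunweightedlem} (giving $Dv \in C^{1 - \gamma}$ up to the flat boundary), to obtain
\[
\| (a^{ij} - \delta^{ij}) \partial_{ij} v \|_{\widetilde{C}^\beta_\gamma(\CC_{2r}^+)} + \| b^i \, \partial_i v \|_{\widetilde{C}^\beta_\gamma(\CC_{2r}^+)} \le C \big( r^\beta + r^\gamma \big) \| v \|_{\widetilde{C}^{2, \beta}_{\gamma, \star}(\CC_{2r}^+)}.
\]

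The main obstacle is then to \emph{close} this inequality: the small factor~$r^\beta + r^\gamma$ multiplies the weighted norm of~$v$ on the \emph{larger} cylinder~$\CC_{2r}^+$, while the estimate produces the norm on~$\CC_r^+$, so one cannot simply absorb. I plan to resolve this through a Campanato-style iteration on a dyadic sequence of cylinders~$\CC_{2^{-k} R_0}^+$, in the spirit of~\cite{GT01}*{Section~6}: summing the perturbed estimates for decreasing radii and using that~$r^\beta + r^\gamma$ can be made arbitrarily small for~$r$ small, one arrives at
\[
\| v \|_{\widetilde{C}^{2, \beta}_{\gamma, \star}(\CC_{r_0}^+)} \le C \big( \| u \|_{L^\infty(\Omega)} + \| f \|_{C^\beta_\gamma(\Omega)} \big)
\]
for some~$r_0 > 0$ depending only on~$n$,~$\beta$,~$\gamma$, and~$\Omega$. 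Transferring back via~$\Psi$ (which preserves the relevant weighted Hölder norms up to constants controlled by~$\Omega$) and combining with the interior estimate on~$\Omega_\delta$ for~$\delta$ comparable to~$r_0$ yields~\eqref{C2alphamainest} after the finite covering.
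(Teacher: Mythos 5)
Your setup --- interior estimates on~$\Omega_\delta$, straightening of the boundary, rewriting the equation as~$-\Delta v = F$ with~$F$ containing the perturbations~$(a^{ij}-\delta^{ij})\partial_{ij}v$ and~$b^i\partial_i v$, and an application of Proposition~\ref{C2alphacyllem} on half-cylinders --- is exactly the paper's, and your bounds on the perturbation terms are of the right size. The gap is in how you close the estimate. The dyadic iteration you propose does not work as described: the recursion~$N(2^{-k}R_0) \le A_k + \varepsilon_k\, N(2^{-(k-1)}R_0)$, with~$N(r) \coloneqq \| v \|_{\widetilde{C}^{2,\beta}_{\gamma,\star}(\CC_{r}^+)}$, runs in the wrong direction --- to control the norm on a cylinder you need the norm on the \emph{larger} one, so unwinding it always terminates at~$N(R_0)$, which is precisely the unknown --- and the lower-order terms~$A_k \sim (2^{-k}R_0)^{\gamma-2}\|v\|_{L^\infty}$ are not summable along the dyadic sequence. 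A genuinely local argument of the kind you assert (a bound on~$\CC_{r_0}^+$ by~$\|u\|_{L^\infty}+\|f\|_{C^\beta_\gamma}$ alone) would require either the standard absorption lemma for~$\phi(t)\le\theta\phi(s)+A(s-t)^{-\alpha}+B$, which needs the cylinder estimate for \emph{all} pairs of radii rather than just~$s=2t$, or a true Campanato decay scheme for the oscillation of~$D^2v$; neither is actually set up in your sketch.

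The resolution --- and the route the paper takes --- is to use that~\eqref{C2alphamainest} is an \emph{a priori} estimate: $u$ is assumed to lie in~$C^{2,\beta}_{\gamma,\star}(\Omega)$, so its global norm is finite. Bound the perturbation terms by~$K(r+r^\gamma)\,\| u \|_{C^{2,\beta}_{\gamma,\star}(\Omega)}$ (the \emph{global} norm, not the norm on~$\CC_{2r}^+$), so that each boundary patch yields
\[
\| u \|_{\widetilde{C}^{2,\beta}_{\gamma,\star}(\Omega;\,B_{2\varrho_{\bar x}}(\bar x))} \le \tfrac{1}{16}\, \| u \|_{C^{2,\beta}_{\gamma,\star}(\Omega)} + C\left( \| u \|_{L^\infty(\Omega)} + \| f \|_{C^\beta_\gamma(\Omega)} \right)
\]
for~$r$ small enough; then sum over a finite covering and absorb the global norm into the left-hand side. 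When summing, you must also control the H\"older quotients of~$D^2u$ for pairs of points lying in different patches; these are handled via the weighted sup bound on~$D^2u$, as in~\eqref{C2alphasemiestforu}. With this replacement of the closing mechanism, your argument coincides with the paper's proof.
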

\begin{proof}
Given~$\delta > 0$, consider the set
\begin{equation} \label{Omegaetadef}
\Omega_\delta \coloneqq \left\{ x \in \Omega : d_x > \delta \right\}.
\end{equation}
By standard interior estimates, it is clear that~$u \in C^{2, \beta}(\overline{\Omega_\delta})$ for every~$\delta > 0$ with
\begin{equation} \label{C2alphaestinOmegaeta}
\| u \|_{C^{2, \beta}(\overline{\Omega_\delta})} \le C_\delta \left( \| u \|_{L^\infty(\Omega)} + \| f \|_{C^{\beta}_{\gamma}(\Omega)} \right),
\end{equation}
for some constant~$C_\delta > 0$ depending only on~$n$,~$\beta$,~$\gamma$, and~$\delta$.

We now address the boundary regularity of~$u$. We claim that, for every~$\bar{x} \in \partial \Omega$, there exist a radius~$\varrho_{\bar{x}} > 0$ and a constant~$C_{\bar{x}} > 0$, depending only on~$n$,~$\beta$,~$\gamma$,~$\partial \Omega$, and~$\bar{x}$, such that
\begin{equation} \label{uC2alphaestle0.25claim}
\| u \|_{\widetilde{C}^{2, \beta}_{-1,\gamma} \left( \Omega; B_{2 \varrho_{\bar{x}}}(\bar{x}) \right)} \le \frac{1}{16} \, \| u \|_{C^{2, \beta}_{-1,\gamma}(\Omega)} + C_{\bar{x}} \left( \| u \|_{L^\infty(\Omega)} + \| f \|_{C^\beta_\gamma(\Omega)} \right),
\end{equation}
where, for~$\bar{x} \in \partial \Omega$ and~$r > 0$,
\begin{align*}
\| u \|_{\widetilde{C}_{-1,\gamma}^{2, \beta}(\Omega; B_r(\bar{x}))} & \coloneqq \sup_{x \in \Omega \cap B_r(\bar{x})} \Big( {d_x^{-1} |u(x)|} \Big) + \| Du \|_{L^\infty(\Omega \cap B_r(\bar{x}))} \\
& \qquad + \sup_{x \in \Omega \cap B_r(\bar{x})} \Big( {d_x^\gamma \, |D^2 u(x)|} \Big) + \sup_{\substack{x, y \in \Omega \cap B_r(\bar{x})\\ x \ne y}} \left( d_{x, y}^{\beta + \gamma} \, \frac{|D^2 u(x) - D^2 u(y)|}{|x - y|^\beta} \right).
\end{align*}
Note that here~$d_x$ still indicates the distance of~$x$ from the entirety of the boundary of~$\Omega$.

As~$\partial \Omega$ is of class~$C^{2, \beta}$, for every~$\bar{x} \in \partial \Omega$ there exists a radius~$R = R_{\bar{x}} \in (0, 1)$ such that, up to a rotation, it holds~$\Omega \cap \CC_{R}(\bar{x}) = \big\{ {(x', x_n) \in \CC_{R}(\bar{x}) : x_n > h(x')} \big\}$, for some function~$h \in C^{2, \beta}(\R^{n - 1})$ satisfying~$h(\bar{x}') = \bar{x}_n$,~$D' h(\bar{x}') = 0'$, and~$\| h \|_{C^{2, \beta}(\R^{n - 1})} \le K$, where, from here on,~$K$ denotes a general constant larger than~$1$, depending at most on~$n$,~$\beta$,~$\gamma$,~$\partial \Omega$, and~$\bar{x}$. Without loss of generality, we may also suppose~$h$ to have compact support.

In order to prove~\eqref{uC2alphaestle0.25claim} we straighten the boundary around the point~$\bar{x}$, which, after a translation, we assume to be the origin. Consider the mapping~$\Psi: \R^n \to \R^n$ defined by~$\Psi(z) \coloneqq \big( {z', z_n + h(z')} \big)$ for every~$z = (z', z_n) \in \R^n$. It is easy to see that~$\Psi$ is a~$C^{2, \beta}$-diffeomorphism of~$\R^n$ onto itself such that~$\Psi \big( {\R^{n - 1} \times (0, +\infty)} \big) = \big\{ {(x', x_n) \in \R^n : x_n > h(x')} \big\}$ and~$\Phi(x) \coloneqq \Psi^{-1}(x) = \big( {x', x_n - h(x')} \big)$ for every~$x \in \R^n$. Also note that
\begin{equation} \label{Psiundercontrol}
\begin{dcases}
1 \le \| D\Psi \|_{C^{1, \beta}(\R^n)} \le K, \quad 1 \le \| D\Phi \|_{C^{1, \beta}(\R^n)} \le K, & \\
\frac{1}{K} \, |z - w| \le |\Psi(z) - \Psi(w)| \le K |z - w| & \quad \mbox{for all } z, w \in \R^n, \\
d_{\Psi(z)} \le z_n \le K d_{\Psi(z)} & \quad \mbox{for all } z \in \CC_{2 r}^+,
\end{dcases}
\end{equation}
for every~$r \le K^{-1} R$.

Observe now that~$\Psi \big( {\CC_{4 r_0}} \big) \subset \CC_{R}$, with~$r_0 \coloneqq \frac{R}{8} \left( 1 + \| D' h \|_{L^\infty(\R^{n - 1})} \right)^{-1}$. Thus, the function~$v \coloneqq u \circ \Psi$ belongs to~$C^2(\CC_{2 r}^+) \cap C^0(\CC_{2 r}^+ \cup \DD_{2 r})$ and solves
\begin{align*}
\left\lbrace\begin{aligned}
-\Delta v &= g && \text{in }\CC_{2 r}^+, \\
v &= 0 && \text{on }\DD_{2 r},
\end{aligned}\right.
\end{align*}
for every~$r \in \left( 0, r_0 \right]$, and where~$g \coloneqq - \mbox{Tr} \big( {A D^2 v} \big) - b \cdot Dv + f \circ \Psi $, with
$$
A(z) \coloneqq \left(\begin{array}{c|c}
\vphantom{\Big|} \mbox{0}_{n - 1} & D'h(z') \\
\hline
\vphantom{\Big|} D'h(z')^T & \big| {D'h(z')} \big|^2
\end{array}\right)
\quad \mbox{and} \quad
b(z) \coloneqq \big( {0', \Delta' h(z')} \big)^T.
$$
Hence, we may apply Proposition~\ref{C2alphacyllem} and obtain the estimate
\begin{equation} \label{vesttech1}
\begin{aligned}
\| v \|_{\widetilde{C}^{2, \beta}_{-1,\gamma}(\CC_r^+)} & \le K \left( r^{\gamma - 2} \| v \|_{L^\infty(\CC_{2 r}^+)} + \| g \|_{\widetilde{C}^{\beta}_\gamma(\CC_{2 r}^+)} \right) \\
& \le K \left( r^{\gamma - 2} \| u \|_{L^\infty(\Omega)} + \left\| \mbox{Tr} \big( {A D^2 v} \big) \right\|_{\widetilde{C}^{\beta}_\gamma(\CC_{2 r}^+)} + \| b \cdot Dv \|_{\widetilde{C}^{\beta}_\gamma(\CC_{2 r}^+)} + \| f \circ \Psi \|_{\widetilde{C}^{\beta}_\gamma(\CC_{2 r}^+)} \right).
\end{aligned}
\end{equation}

Note that~$A$ and~$b$ satisfy~$\| A \|_{L^\infty(\CC_{2 r}^+)} \le K r$ and~$\| DA \|_{L^\infty(\CC_{2 r}^+)} + \| b \|_{C^\beta(\CC_{2 r}^+)} \le K$. Consequently, after some computations we find that, if~$r$ is sufficiently small,
\begin{align*}
\left\| \mbox{Tr} \big( {A D^2 v} \big) \right\|_{\widetilde{C}^{\beta}_\gamma(\CC_{2 r}^+)} & \le K r \| u \|_{C^{2, \beta}_{-1,\gamma}(\Omega)}, \\
\| b \cdot Dv \|_{\widetilde{C}^{\beta}_\gamma(\CC_{2 r}^+)} & \le
 K r^\gamma \| u \|_{C^{2, \beta}_{-1,\gamma}(\Omega)}, \\
\| f \circ \Psi \|_{\widetilde{C}^\beta_\gamma(\CC_{2 r}^+)} & \le K \| f \|_{C^\beta_\gamma(\Omega)},
\end{align*}
where we also took advantage of~\eqref{Psiundercontrol}. Thanks to these bounds,~\eqref{vesttech1} yields
\[
\| v \|_{\widetilde{C}^{2, \beta}_{-1,\gamma}(\CC_r^+)} \le K \left( r^\gamma \| u \|_{C^{2, \beta}_{-1,\gamma}(\Omega)} + r^{\gamma - 2} \| u \|_{L^\infty(\Omega)} + \| f \|_{C^\beta_\gamma(\Omega)} \right).
\]
Letting~$\varrho \coloneqq \kappa_0 r$ with~$\kappa_0 \coloneqq \left( 2 \| D \Phi \|_{L^\infty(\R^n)} \right)^{-1}$, we clearly have that~$B_\varrho \subset \Psi(\CC_r)$. Thanks to this and~\eqref{Psiundercontrol}, it is not hard to see that~$\| u \|_{\widetilde{C}^{2, \beta}_{-1,\gamma}(\Omega; B_\varrho)} \le K \| v \|_{\widetilde{C}^{2, \beta}_{-1,\gamma}(\CC_r^+)}$. Accordingly, we conclude that
$$
\| u \|_{\widetilde{C}^{2, \beta}_{-1,\gamma}(\Omega; B_\varrho)} \le K \left( r^\gamma \| u \|_{C^{2, \beta}_{-1,\gamma}(\Omega)} + r^{\gamma - 2} \| u \|_{L^\infty(\Omega)} + \| f \|_{C^\beta_\gamma(\Omega)} \right),
$$
and claim~\eqref{uC2alphaestle0.25claim} follows by taking~$r$ (and therefore~$\varrho$) suitably small.

Now that~\eqref{uC2alphaestle0.25claim} is established, the conclusion of the lemma can be inferred from a simple covering argument. Indeed, let~$\{ \bar{x}_1, \ldots, \bar{x}_N \} \subset \partial \Omega$ be such that~$\partial \Omega \subset \bigcup_{j = 1}^N B_{\varrho_{\bar{x}_j}}(\bar{x}_j)$. Then, there exists a small~$\delta_0 > 0$ for which~$\Omega \subset \Omega_{\delta_0} \cup \bigcup_{j = 1}^N B_{\varrho_{\bar{x}_j}}(\bar{x}_j)$, with~$\Omega_{\delta_0}$ as in~\eqref{Omegaetadef}. Clearly, from this,~\eqref{C2alphaestinOmegaeta}, and~\eqref{uC2alphaestle0.25claim}, it follows that
\begin{equation} \label{C2estforu}
\begin{aligned}
& \sup_{x \in \Omega} \Big( {d_x^{-1} |u(x)|} \Big) + \| Du \|_{L^\infty(\Omega)} + \sup_{x \in \Omega} \Big( {d_x^\gamma \, |D^2 u(x)|} \Big) \\
& \hspace{100pt} \le \frac{3}{16} \, \| u \|_{C^{2, \beta}_{-1,\gamma}(\Omega)} + 3 C_\star \left( \| u \|_{L^\infty(\Omega)} + \| f \|_{C^\beta_\gamma(\Omega)} \right),
\end{aligned}
\end{equation}
with~$C_\star \coloneqq C_{\delta_0} + \max_{j \in \{ 1, \ldots, N \}} C_{\bar{x}_j}$. On the other hand, employing again~\eqref{C2alphaestinOmegaeta} and~\eqref{uC2alphaestle0.25claim}, we also compute
\begin{equation} \label{C2alphasemiestforu}
\begin{aligned}
& \sup_{\substack{x, y \in \Omega\\ x \ne y}} \left( d_{x, y}^{\beta + \gamma} \, \frac{|D^2 u(x) - D^2 u(y)|}{|x - y|^\beta} \right) = \sup_{\substack{x, y \in \Omega\\ x \ne y, \, d_x \le d_y}} \left( d_{x}^{\beta + \gamma} \, \frac{|D^2 u(x) - D^2 u(y)|}{|x - y|^\beta} \right) \\
& \hspace{26pt} \le \sup_{x, y \in \Omega_{\delta_0}} \left( d_{x}^{\beta + \gamma} \, \frac{|D^2 u(x) - D^2 u(y)|}{|x - y|^\beta} \right) + \max_{j \in \{ 1, \ldots, N \}} \, \sup_{\substack{x, y \in \Omega \cap B_{2 \varrho_{\bar{x}_j}} \!\! (\bar{x}_j)\\ x \ne y}} \left( d_{x, y}^{\beta + \gamma} \, \frac{|D^2 u(x) - D^2 u(y)|}{|x - y|^\beta} \right) \\
& \hspace{26pt} \quad + \max_{j \in \{ 1, \ldots, N \}} \, \sup_{\substack{x \in \Omega \cap B_{\varrho_{\bar{x}_j}} \!\! (\bar{x}_j), \, y \in \Omega \setminus B_{2 \varrho_{\bar{x}_j}} \!\! (\bar{x}_j)\\ x \ne y, \, d_x \le d_y}} \left( d_{x}^{\beta + \gamma} \, \frac{|D^2 u(x) - D^2 u(y)|}{|x - y|^\beta} \right) \\
& \hspace{26pt} \le \frac{7}{16} \, \| u \|_{C^{2, \beta}_{-1,\gamma}(\Omega)} + 7 C_\star \left( \| u \|_{L^\infty(\Omega)} + \| f \|_{C^\beta_\gamma(\Omega)} \right),
\end{aligned}
\end{equation}
where we used that, for every~$j \in \{ 1, \ldots, N \}$,
\begin{align*}
& \sup_{\substack{x \in \Omega \cap B_{\varrho_{\bar{x}_i}} \!\! (\bar{x}_j), \, y \in \Omega \setminus B_{2 \varrho_{\bar{x}_j}} \!\! (\bar{x}_i)\\ x \ne y, \, d_x \le d_y}} \left( d_{x}^{\beta + \gamma} \, \frac{|D^2 u(x) - D^2 u(y)|}{|x - y|^\beta} \right) \\
& \hspace{50pt} \le \sup_{\substack{x \in \Omega \cap B_{\varrho_{\bar{x}_j}} \!\! (\bar{x}_j), \, y \in \Omega \setminus B_{2 \varrho_{\bar{x}_j}} \!\! (\bar{x}_j)\\ x \ne y, \, d_x \le d_y}} \left\{ \left( \frac{d_{x}}{\varrho_{\bar{x}_j}} \right)^{\! \beta} \Big( {d_{x}^\gamma |D^2 u(x)| + d_{y}^\gamma |D^2 u(y)|} \Big) \right\} \le 2 \sup_{x \in \Omega} \Big( {d_x^\gamma |D^2 u(x)|} \Big) \\
& \hspace{50pt} \le \frac{3}{8} \, \| u \|_{C^{2, \beta}_{-1,\gamma}(\Omega)} + 6 C_\star \left( \| u \|_{L^\infty(\Omega)} + \| f \|_{C^\beta_\gamma(\Omega)} \right),
\end{align*}
thanks to~\eqref{C2estforu}. By combining~\eqref{C2estforu} and~\eqref{C2alphasemiestforu}, we obtain
$$
\| u \|_{C^{2, \beta}_{-1,\gamma}(\Omega)} \le \frac{5}{8} \, \| u \|_{C^{2, \beta}_{-1,\gamma}(\Omega)} + 10 \, C_\star \left( \| u \|_{L^\infty(\Omega)} + \| f \|_{C^\beta_\gamma(\Omega)} \right),
$$
which leads to~\eqref{C2alphamainest}, after we reabsorb on the left-hand side the~$C^{2, \beta}_{-1,\gamma}$ norm appearing on the right. The proof is complete.
\end{proof}

Thanks to the previous result, estimate~\eqref{mainC2alphaest} is reduced to an~$L^\infty(\Omega)$ estimate for solutions of~\eqref{localDirprobforu}. We take care of this issue in the following lemma, by means of a simple barrier.

\begin{lemma} \label{Linftybarrierlem}
Let~$\gamma \in (0, 1)$,~$\Omega \subset \R^n$ be a bounded domain with boundary of class~$C^2$, and~$f \in L^\infty_\loc(\Omega)$. Let~$u \in C^2(\Omega) \cap C^0(\overline{\Omega})$ be a solution of~\eqref{localDirprobforu}. Then,
\begin{equation} \label{aprioriLinftyest}
\| u \|_{L^\infty(\Omega)} \le C \sup_{x \in \Omega} \Big( {d_x^{\gamma} |f(x)|} \Big),
\end{equation}
for some constant~$C > 0$ depending only on~$n$,~$\gamma$, and~$\Omega$.
\end{lemma}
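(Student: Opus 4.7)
The plan is to construct a single global barrier $\Phi$ satisfying $\Phi \in C^2(\Omega)\cap C^0(\overline\Omega)$, $\Phi = 0$ on $\partial\Omega$, $-\Delta\Phi \geq K d_x^{-\gamma}$ in $\Omega$, and $\|\Phi\|_{L^\infty(\Omega)} \leq C K$, where $K := \sup_{x\in\Omega} d_x^\gamma |f(x)|$ (assumed finite, otherwise there is nothing to prove) and $C > 0$ depends only on $n, \gamma, \Omega$. Once such a $\Phi$ is available, the estimate follows by comparison: since $-\Delta(\Phi \pm u) = -\Delta\Phi \pm f \geq K d_x^{-\gamma} - |f| \geq 0$ in $\Omega$ and $\Phi \pm u = 0$ on $\partial\Omega$, the classical weak maximum principle applied to the $C^2(\Omega) \cap C^0(\overline\Omega)$ functions $\Phi \pm u$ yields $|u| \leq \Phi \leq CK$, which is exactly~\eqref{aprioriLinftyest}.

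To construct the barrier, let $\psi$ denote the classical solution of $-\Delta\psi = 1$ in $\Omega$ with $\psi|_{\partial\Omega} = 0$; for $C^2$ domains, $\psi \in C^2(\Omega) \cap C^{1,\alpha'}(\overline\Omega)$ for every $\alpha' \in (0,1)$ by standard $W^{2,p}$ theory and Sobolev embedding. The strong maximum principle gives $\psi > 0$ in $\Omega$, and $\|\psi\|_{L^\infty(\Omega)}$ is controlled by $\Omega$. The Hopf lemma provides $-\partial_\nu\psi \geq c_0 > 0$ on $\partial\Omega$, which propagates by continuity to $|\nabla\psi| \geq c_0/2$ in a boundary strip $U := \{x\in\Omega\,:\,d_x < \eta\}$ for some $\eta > 0$; combined with $\psi|_{\partial\Omega}=0$ and an integration along normals, this produces $c_1 d_x \leq \psi(x) \leq c_2 d_x$ for all $x \in \Omega$, with constants depending only on $\Omega$.

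Now take $\phi := \psi^{1/2}$, which is $C^2$ in $\Omega$ and continuous on $\overline\Omega$ with $\phi|_{\partial\Omega}=0$. A direct computation yields
\[
-\Delta\phi = \tfrac{1}{2}\psi^{-1/2}(-\Delta\psi) + \tfrac{1}{4}\psi^{-3/2}|\nabla\psi|^2 = \tfrac{1}{2}\psi^{-1/2} + \tfrac{1}{4}\psi^{-3/2}|\nabla\psi|^2 > 0 \quad \text{in } \Omega.
\]
On $U$, the bounds on $\psi$ and $|\nabla\psi|$ give $-\Delta\phi \geq c_3 d_x^{-3/2}$; since $d_x \leq \diam(\Omega)$ and $\gamma < 1 < 3/2$, this implies $-\Delta\phi \geq c_3' d_x^{-\gamma}$ on $U$. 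On $\Omega\setminus U$, the first term already gives $-\Delta\phi \geq \tfrac{1}{2}\|\psi\|_\infty^{-1/2}$, while $d_x^{-\gamma} \leq \eta^{-\gamma}$; thus $-\Delta\phi$ dominates a constant multiple of $d_x^{-\gamma}$ there as well. Combining, $-\Delta\phi \geq c_4 d_x^{-\gamma}$ throughout $\Omega$ for some $c_4 > 0$ depending only on $n,\gamma,\Omega$, and setting $\Phi := (K/c_4)\phi$ completes the construction, since $\|\phi\|_{L^\infty(\Omega)} \leq \|\psi\|_{L^\infty(\Omega)}^{1/2}$.

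The argument is essentially explicit, so no deep obstacle arises; the one point requiring attention is to exploit the two-sided boundary behavior $\psi \sim d_x$ together with the Hopf lower bound on $|\nabla\psi|$ so that the concave rescaling $\psi \mapsto \psi^{1/2}$ turns the bounded quantity $-\Delta\psi=1$ into a lower bound on $-\Delta\phi$ with genuinely singular rate, stronger than the needed $d_x^{-\gamma}$, which can then be absorbed into a single constant depending on $\diam(\Omega)$.
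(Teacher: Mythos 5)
Your proof is correct, and it reaches the same conclusion as the paper by the same overall strategy (an explicit supersolution dominating $K\,d_x^{-\gamma}$, followed by the weak comparison principle applied to $\Phi\pm u$), but the barrier itself is built differently. The paper works directly with the distance function: its barrier is $B\left(R^2-|x|^2\right)-D\,\eta\!\left(\delta^{-1}d(x)\right)d(x)^{2-\gamma}$, which produces the singular term $(2-\gamma)(1-\gamma)\,d^{-\gamma}$ exactly at the rate needed; the price is that one must invoke the $C^2$ regularity of $d$ near $\partial\Omega$ (\cite{GT01}*{Lemma~14.16}) and handle the cutoff and curvature terms in a transition region. You instead take the torsion function $\psi$ and use $\phi=\psi^{1/2}$, so the singularity of $-\Delta\phi$ comes from the concavity term $\tfrac14\psi^{-3/2}|\nabla\psi|^2$ together with the Hopf lower bound $|\nabla\psi|\ge c_0/2$ near $\partial\Omega$ and the upper bound $\psi\le c_2 d_x$; this yields the stronger rate $d_x^{-3/2}$, which dominates $d_x^{-\gamma}$ for every $\gamma\in(0,1)$ on a bounded set and can be absorbed into the constant. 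Your route trades the explicit distance-function computation for standard inputs (solvability of the torsion problem on a $C^2$ domain, uniform Hopf lemma, $C^1(\overline\Omega)$ regularity of $\psi$), and has the minor aesthetic advantage of a single barrier working for all $\gamma\in(0,1)$ at once; the paper's barrier is more self-contained and tracks the sharp rate $d^{-\gamma}$, which is closer in spirit to the weighted spaces used elsewhere in the argument. All the steps you rely on (uniform Hopf constant via the interior ball condition, propagation of the gradient lower bound to a strip by uniform continuity of $\nabla\psi$, and the minimum principle for $C^2(\Omega)\cap C^0(\overline\Omega)$ superharmonic functions) are valid for $C^2$ domains, so there is no gap.
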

\begin{proof}
First, we recall that the distance function~$d(x) = d_x$ is of class~$C^2$ in a neighborhood~$N_{\delta_0} \coloneqq \overline{\Omega} \setminus \Omega_{\delta_0}$ of~$\partial \Omega$, for some~$\delta_0 > 0$ depending only on~$\Omega$---see, \textit{e.g.},~\cite{GT01}*{Lemma~14.16}. Here, we are using notation~\eqref{Omegaetadef}.

Set
$$
M \coloneqq \sup_{x \in \Omega} \Big( {d_x^{\gamma} |f(x)|} \Big).
$$
Up to a translation, we have that~$\Omega \subset B_R$, with~$R \coloneqq \diam(\Omega)$. Let~$\delta \in \left( 0, \frac{\delta_0}{2} \right)$ and~$\eta \in C^2([0, +\infty))$ be a non-increasing function satisfying~$\eta = 1$ in~$[0, 1]$,~$\eta = 0$ in~$[2, +\infty)$, and~$|\eta'| + |\eta''| \le 10$ in~$(1, 2)$. Define
$$
\varphi(x) \coloneqq B \left( R^2 - |x|^2 \right) - D \, \eta \big( {\delta^{-1} d(x)} \big) d(x)^{2 - \gamma} \quad \mbox{for } x \in \Omega,
$$
for some positive constants~$B$ and~$D$ to be determined. Clearly,~$\varphi \in C^2(\Omega) \cap C^0(\overline{\Omega})$ with~$\varphi \ge 0$ on~$\partial \Omega$. 

Using that~$|D d| = 1$ in~$N_{2 \delta}$, we compute
\begin{align*}
- \Delta \varphi(x) & = 2 n B + D \Big\{ {\eta'' \big( {\delta^{-1} d(x)} \big) \delta^{-2} d(x)^2 + 2 (2- \gamma) \eta' \big( {\delta^{-1} d(x)} \big) \delta^{-1} d(x) + (2 - \gamma) (1 - \gamma) \eta \big( {\delta^{-1} d(x)} \big)} \\
& \quad {+ \Big( {\eta' \big( {\delta^{-1} d(x)} \big) \delta^{-1} d(x) + (2 - \gamma) \eta \big( {\delta^{-1} d(x)} \big)} \Big) d(x) \Delta d(x)} \Big\} d(x)^{- \gamma} \\
& \ge B + D \, \Big\{ {- 120 \chi_{(\delta, 2 \delta)}\big( {d(x)} \big)} + (1 - \gamma) \chi_{(0, \delta]} \big( {d(x)} \big) - 50 \delta \| \Delta d \|_{L^\infty(N_{\delta_0})} \chi_{(0, 2 \delta)}\big( {d(x)} \big) \Big\} d(x)^{- \gamma} \\
& \ge B + D \left\{ \frac{1 - \gamma}{2} \, \chi_{(0, \delta]} \big( {d(x)} \big) - 121  \, \chi_{(\delta, 2\delta)} \big( {d(x)} \big) \right\} d(x)^{- \gamma},
\end{align*}
provided we take~$\delta \coloneqq \min \left\{ \frac{\delta_0}{4}, \frac{1 - \gamma}{100} \| \Delta d \|_{L^\infty(N_{\delta_0})}^{-1} \right\}$. Choosing now
\[
D \coloneqq \frac{2 M}{1 - \gamma} \quad \mbox{and} \quad B \coloneqq \frac{M}{\delta^\gamma} \left( \frac{242}{1 - \gamma} + 1 \right),
\]
we get that~$- \Delta \varphi \ge |f|$ in~$\Omega$. From the weak comparison principle we thus infer that~$|u| \le \varphi$ in~$\Omega$, which gives~\eqref{aprioriLinftyest}.
\end{proof}

We have now all the ingredients needed to establish Theorem~\ref{mainC2alphaexistprop}.
In particular, by the last two results, we only need to show that problem~\eqref{localDirprobforu} actually admits a~$C^{2, \beta}_{-1,\gamma}(\Omega)$-solution.

\begin{proof}[Proof of Theorem~\ref{mainC2alphaexistprop}]
To establish the existence of a~$C^{2, \beta}_{-1,\gamma}(\Omega)$-solution of problem~\eqref{localDirprobforu}, we use the following simple approximation argument.

Let~$\eta \in C^1([0, +\infty))$ be a non-decreasing function satisfying~$\eta = 0$ in~$[0, 1]$,~$\eta = 1$ in~$[2, +\infty)$, and~$|\eta'| \le 2$ in~$(1, 2)$. Given~$k \in \N$, let~$f_k(x) \coloneqq \eta \big( {k d(x)} \big) f(x)$ for~$x \in \Omega$. Clearly,~$f_k$ is of class~$C^\beta$ up to the boundary of~$\Omega$ and thus, in particular,~$f_k \in C^\beta_\gamma(\Omega)$. Moreover, its~$C^\beta_\gamma(\Omega)$ norm is bounded uniformly with respect to~$k$, as it holds
\begin{equation} \label{fknormlef}
\| f_k \|_{C^\beta_\gamma(\Omega)} \le 5 \| f \|_{C^\beta_\gamma(\Omega)}.
\end{equation}
Indeed, we have
$$
\sup_{x \in \Omega} \Big( {d_x^{\gamma} |f_k(x)|} \Big) \le \sup_{x \in \Omega} \Big( {d_x^{\gamma} |f(x)|} \Big)
$$
and
\begin{align*}
& \sup_{\substack{x, y \in \Omega\\x \ne y}} \left( d_{x, y}^{\beta + \gamma} \frac{|f_k(x) - f_k(y)|}{|x - y|^\beta} \right) = \sup_{\substack{x, y \in \Omega\\x \ne y, \, d_x \le d_y}} \left( d_x^{\beta + \gamma} \frac{|f_k(x) - f_k(y)|}{|x - y|^\beta} \right) \\
& \hspace{30pt} \le \sup_{\substack{x, y \in \Omega\\x \ne y, \, d_x \le \min \left\{ d_y, \frac{2}{k} \right\}}} \left\{ d_x^{\gamma} |f(x)| \left( d_x \, \frac{\big| {\eta \big( {k d(x)} \big) - \eta \big( {k d(y)} \big)} \big|}{|x - y|} \right)^{\! \beta} \big| {\eta \big( {k d(x)} \big) - \eta \big( {k d(y)} \big)} \big|^{1 - \beta} \right\} \\
& \hspace{30pt} \quad + \sup_{\substack{x, y \in \Omega\\x \ne y, \, d_x \le \min \left\{ d_y, \frac{2}{k} \right\}}} \left( d_x^{\beta + \gamma} \frac{|f(x) - f(y)|}{|x - y|^\beta} \, \big| {\eta \big( {k d(y)} \big)} \big| \right) + \sup_{\substack{x, y \in \Omega\\x \ne y, \, \frac{2}{k} < d_x \le d_y}} \left( d_x^{\beta + \gamma} \frac{|f(x) - f(y)|}{|x - y|^\beta} \right) \\
& \hspace{30pt} \le 2^{2 \beta} \sup_{x \in \Omega} \Big( d_x^\gamma |f(x)| \Big) + 2 \sup_{\substack{x, y \in \Omega\\x \ne y}} \left( d_{x, y}^{\beta + \gamma} \frac{|f(x) - f(y)|}{|x - y|^\beta} \right),
\end{align*}
which give~\eqref{fknormlef}.

As~$f_k \in C^\beta(\overline\Omega)$, by standard elliptic theory there exists a unique solution~$u_k \in C^{2, \beta}(\overline\Omega)$ of
\begin{align*}
\left\lbrace\begin{aligned}
-\Delta u_k &= f_k && \text{in } \Omega, \\
u_k &= 0 && \text{on } \partial \Omega.
\end{aligned}\right.
\end{align*}
By virtue of Theorem~\ref{aprioriC2alphaestlem}, Lemma~\ref{Linftybarrierlem}, and estimate~\eqref{fknormlef}, we have that
$$
\| u_k \|_{C^{2, \beta}_{-1,\gamma}(\Omega)} \le C \| f \|_{C^\beta_\gamma(\Omega)} \quad \mbox{for every } k \in \N,
$$
for some constant~$C > 0$ depending only on~$n$,~$\beta$,~$\gamma$, and~$\Omega$. Thanks to this uniform bound, standard compact embeddings of H\"older spaces, and a diagonal argument, we conclude that, up to a subsequence,~$\{ u_k \}$ converges in~$C^0(\overline{\Omega}) \cap C^2(\Omega)$ to a function~$u \in C^{2, \beta}_{-1,\gamma}(\Omega)$. Clearly,~$u$ solves~\eqref{localDirprobforu} and satisfies~\eqref{mainC2alphaest}.
\end{proof}

\section{Maximum principles}\label{sec:maxprinc}

Before turning to the proof of Theorem~\ref{mainlinearexthmm}, 
we need maximum principles for the mixed operator~$p \, (-\Delta) + q \, L_k + g \cdot D$ and an \textit{a priori} estimate for the norm~$\| \cdot \|_{C^0_{-1}(\Omega)}$ which stems from them.
In order to do that, we need a preparatory lemma.

\begin{lemma}\label{lem:aux}
Let~$k$ be a kernel satisfying~\eqref{k}, for some~$s \in (0, 1)$ and~$\kappa_2 \ge \kappa_1 > 0$. Given~$\lambda, R > 0$, let
\begin{equation}\label{vlemaux}
v(x) = v_{\lambda, R}(x) \coloneqq e^{\lambda x_1}\chi_{B_{2R}}(x)
\qquad\text{for }x\in\R^n.
\end{equation}
Then, there exists a constant~$C \ge 1$, depending only on~$n$,~$s$,~$\kappa_1$, and~$\kappa_2$, such that
\[
L_k v(x)\leq 
- \frac{1}{C} \frac{e^{\lambda R/2}}{\lambda R^{1+2s}} \, v(x)
\qquad\text{for every }x\in B_R,
\]
provided~$\lambda R \ge C$.
\end{lemma}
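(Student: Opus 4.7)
My plan is to exploit the evenness of~$k$ and rewrite
\begin{equation*}
L_k v(x) = -\frac{1}{2}\int_{\R^n} \big[v(x+z) + v(x-z) - 2v(x)\big]\,k(z)\,dz,
\end{equation*}
which is an absolutely convergent integral because $v$ is smooth inside~$B_{2R}$, making the second difference $O(|z|^2)$ near the origin. The crucial observation is that for $x\in B_R$ and $|z|<R$ one has $x\pm z\in B_{2R}$, so the second difference is explicit and nonnegative there:
$v(x+z)+v(x-z)-2v(x) = 2v(x)\big(\cosh(\lambda z_1)-1\big)$.
The negativity of $L_k v$ will therefore be extracted from this region.

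To produce the sharp factor $e^{\lambda R/2}$, I plan to further restrict the $B_R$-integral to the cylinder
$Q \coloneqq \{z\in\R^n : R/2 < z_1 < 3R/4,\ |z'| < R/4\}$,
which a quick check ($(3/4)^2+(1/4)^2 = 5/8 < 1$) shows is contained in~$B_R$. On~$Q$, once $\lambda R$ exceeds some absolute constant, $\cosh(\lambda z_1)-1 \geq \tfrac14 e^{\lambda R/2}$; moreover~\eqref{k} gives $k(z)\geq \kappa_1 R^{-n-2s}$ and $|Q|$ is of order~$R^n$. Multiplying these estimates produces
\begin{equation*}
-\frac{1}{2}\int_{B_R}\big[v(x+z)+v(x-z)-2v(x)\big]\,k(z)\,dz \ \leq\ -c_1\, v(x)\, R^{-2s}\, e^{\lambda R/2}
\end{equation*}
for some $c_1 > 0$ depending only on~$n$ and~$\kappa_1$.

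It remains to control the complementary region $|z|>R$, where the second difference can turn negative and thus generate an unfavorable positive contribution to~$L_k v$. However, the nonnegativity of~$v$ yields the trivial bound $v(x+z)+v(x-z)-2v(x)\geq -2v(x)$, so this bad contribution is at most $v(x)\int_{|z|>R}k(z)\,dz \leq C_2\, v(x)\, R^{-2s}$ using the upper bound in~\eqref{k}. Combining,
$L_k v(x) \leq v(x)\,R^{-2s}\big(C_2 - c_1 e^{\lambda R/2}\big) \leq -\tfrac{c_1}{2}\, v(x)\, R^{-2s}\, e^{\lambda R/2}$
as soon as $\lambda R$ exceeds a further universal constant. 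Since $R^{-2s} = (\lambda R)\cdot(\lambda R^{1+2s})^{-1} \geq (\lambda R^{1+2s})^{-1}$ whenever $\lambda R\geq 1$, the claimed inequality follows upon choosing $C$ large enough. The only mildly delicate point is the elementary geometric verification that $Q\subset B_R$ and delivers the right exponential factor; everything else reduces to a routine pointwise estimate on the main region and a trivial tail bound.
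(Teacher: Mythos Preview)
Your argument is correct. The symmetric second-difference representation immediately makes the $B_R$-contribution manifestly nonpositive (since $\cosh(\lambda z_1)-1\ge 0$), which is why you can dispense with any principal-value bookkeeping; the cylinder $Q$ is indeed contained in $B_R$ and delivers the factor $e^{\lambda R/2}$ exactly as you claim; and the tail bound $v(x+z)+v(x-z)-2v(x)\ge -2v(x)$ is legitimate because $v\ge 0$.

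The paper proceeds differently. It works with the one-sided integral $\pv\int(v(x)-v(x+z))k(z)\,dz$, uses $\chi_{B_{2R}}(x+z)\ge\chi_{B_R}(z)$ to reduce to $v(x)\{\pv\int_{B_R}(1-e^{\lambda z_1})k(z)\,dz+\int_{\R^n\setminus B_R}k(z)\,dz\}$, then changes variables $w=\lambda z$ and handles the principal value near the origin by bounding $1-e^{w_1}\le -w_1$ on $B_1$ and invoking the evenness of $k$ to kill the linear term. The exponential gain is then extracted on a rectangular slab $E\subset B_{\lambda R}\cap\{w_1>1\}$, after separately bounding the piece with $w_1\le 1$.

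Your route is more economical: symmetrising up front replaces the delicate $1-e^{\lambda z_1}$ (which changes sign) by the nonnegative $\cosh(\lambda z_1)-1$, so no cancellation argument on $B_1$ is needed, and the splitting collapses to just ``inside $B_R$'' versus ``outside $B_R$''. The paper's approach, by contrast, would adapt more readily to kernels that are merely bounded above and below by an even function rather than even themselves, since it only uses symmetry to kill a single linear moment.
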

\begin{proof}
By recalling~\eqref{k} and taking~$\lambda \ge 4/R$, we directly estimate, for~$x\in B_R$,
\begin{equation} \label{f89u03489hf4hfi}
\begin{aligned}
L_k v(x) &= 
\pv \int_{\R^n} \left( e^{\lambda x_1}-e^{\lambda(x_1 + z_1)}\chi_{B_{2R}}(x + z) \right) k(z) \, dz \\
& \le
v(x) \left\{
\pv\int_{B_R} \left( 1-e^{\lambda z_1} \right) k(z) \, dz
+ \int_{\R^n\setminus B_R} k(z) \, dz \right\} \\
& =
v(x) \left\{ \frac{1}{\lambda^{n}} \,
\pv\int_{B_{\lambda R}} \left( 1-e^{w_1} \right) k \left( \frac{w}{\lambda} \right) dw
+ \int_{\R^n\setminus B_R} k(z) \, dz \right\} \\
&\leq
v(x) \left\{ \frac{1}{\lambda^{n}} \int_{B_{\lambda R}\setminus B_1} \left( 1-e^{w_1} \right) k \left(  \frac{w}{\lambda} \right) dw - \frac{1}{\lambda^{n}}\,
\pv\int_{B_1} w_1 \, k \left( \frac{w}{\lambda} \right) dw + \kappa_2 \int_{\R^n\setminus B_R} \frac{dz}{{|z|}^{n+2s}} \right\} \\
& = 
v(x) \left\{\frac{1}{\lambda^{n}} \int_{B_{\lambda R}\setminus B_1} \left( 1-e^{w_1} \right) k \left(  \frac{w}{\lambda} \right) dw
+ \frac{\cH^{n-1}(\partial B_1) \, \kappa_2}{2s} \, R^{-2s}
\right\},
\end{aligned}
\end{equation}
where we have also used that
\[
1-e^{w_1}\leq - w_1 \quad \mbox{for every } w_1 \in \R \quad \text{and} \quad 
\pv\int_{B_1} w_1 \, k \left( \frac{w}{\lambda} \right) dw = 0,
\quad\text{by symmetry}.
\]
We now look more closely at the term
\[
\int_{B_{\lambda R}\setminus B_1} \left( 1-e^{w_1} \right) k \left(  \frac{w}{\lambda} \right) dw,
\]
that we split as
\[
\int_{B_{\lambda R}\setminus B_1} \left( 1-e^{w_1} \right) k \left(  \frac{w}{\lambda} \right) dw
=
\int_{(B_{\lambda R}\setminus B_1)\cap\{w_1\le1\}} \left( 1-e^{w_1} \right) k \left(  \frac{w}{\lambda} \right) dw
+ \int_{B_{\lambda R}\cap\{w_1>1\}} \left( 1-e^{w_1} \right) k \left(  \frac{w}{\lambda} \right) dw.
\]
Using~\eqref{k}, the first of the above two integrals can simply be estimated by
\begin{equation} \label{09u24hge}
\begin{aligned}
\int_{(B_{\lambda R}\setminus B_1)\cap\{w_1 \le 1\}} \left( 1-e^{w_1} \right) k \left( \frac{w}{\lambda} \right) dw
& \leq
\int_{(B_{\lambda R}\setminus B_1)\cap\{w_1 \le 1\}} k \left( \frac{w}{\lambda} \right) dw \\
& \leq 
\kappa_2 \, \lambda^{n + 2 s} \int_{\R^n\setminus B_1}\frac{dw}{{|w|}^{n+2s}}
=
\frac{\cH^{n-1}(\partial B_1)}{2s} \, \kappa_2 \, \lambda^{n + 2 s}.
\end{aligned}
\end{equation}
The second integral can be instead treated as follows. Keeping in mind once again assumption~\eqref{k} and that~$1-e^{w_1} < - \frac{1}{2} \, e^{w_1}$ for every~$w_1 > 1$, we have that
\begin{align*}
\int_{B_{\lambda R}\cap\{w_1>1\}} \left( 1-e^{w_1} \right) k \left(  \frac{w}{\lambda} \right) dw
& \leq - \frac{1}{2} \int_{B_{\lambda R}\cap\{w_1>1\}} e^{w_1} k \left(  \frac{w}{\lambda} \right) dw \\
& \leq - \frac{\kappa_1}{2} \, \lambda^{n + 2 s} \int_{B_{\lambda R}\cap\{w_1>1\}} \frac{e^{w_1}}{|w|^{n+2s}}\, dw \\
& \leq - \frac{\kappa_1}{2} \, R^{- n - 2 s} \int_{B_{\lambda R}\cap\{w_1>1\}} e^{w_1} \, dw.
\end{align*}
By defining
\[
E\coloneqq\bigg\{(w_1, w')\in\R\times\R^{n-1}:1<w_1<\frac{\lambda R}2,\,|w'|<\frac{\sqrt3}2\lambda R\bigg\}
\subset B_{\lambda R}\cap\{w_1>1\},
\]
we have
\[
\int_{B_{\lambda R}\cap\{w_1>1\}} e^{w_1} \, dw \geq
\int_E e^{w_1} \, dw =
c\,(\lambda R)^{n-1}\int_1^{\lambda R/2} e^{w_1} \, dw_1
\ge \frac{c}{2} \, (\lambda R)^{n-1} e^{\lambda R/2},
\]
for some constant~$c>0$ only depending on~$n$ and where we have used that~$\lambda R \ge 4$. Therefore,
\[
\int_{B_{\lambda R}\cap\{w_1>1\}} \left( 1-e^{w_1} \right) k \left(  \frac{w}{\lambda} \right) dw \le - \frac{c \, \kappa_1}{4} \, \lambda^{n - 1} R^{- 1 - 2 s} \, e^{\lambda R/2}.
\]
Plugging this and~\eqref{09u24hge} into~\eqref{f89u03489hf4hfi} we obtain
\[
L_k v(x) \le v(x) \left\{ C \, \Big( {\lambda^{2 s} + R^{- 2 s}} \Big) - \frac{2}{C} \, \frac{e^{\lambda R/2}}{\lambda R^{1 + 2 s}}
\right\} = - \frac{1}{C} \, \frac{e^{\lambda R/2}}{\lambda R^{1 + 2 s}} \, v(x) \left\{ 2 - C^2 \left( \frac{(\lambda R)^{1 + 2 s}}{e^{\lambda R/2}} + \frac{\lambda R}{e^{\lambda R/2}} \right) \right\},
\]
for some constant~$C \ge 4$ only depending on~$n$,~$s$,~$\kappa_1$, and~$\kappa_2$. This leads to the inequality claimed in the statement, provided we take~$\lambda R$ suitably large.
\end{proof}

With this in hand, we may now state and prove our first maximum principle.

\begin{lemma}\label{lem:comparison0}
Let~$\Omega\subseteq\R^n$ be a bounded open set and~$k$ be a kernel satisfying~\eqref{k}, for some~$s \in (0, 1)$ and~$\kappa_2 \ge \kappa_1 > 0$. Let~$g\in L^\infty(\Omega)$
and~$p, q: \Omega \to \R$ be two measurable non-negative functions.
Let~$\underline{u}: \R^n \to \R$ be a measurable function, continuous in an open neighborhood of~$\overline{\Omega}$, and~$C^2$ inside~$\Omega$, which satisfies
$$
\int_{\R^n} \frac{|\underline{u}(x)|}{1 + |x|^{n + 2 s}} \, dx < +\infty
$$
and
\begin{equation}\label{0f9jio4}
p \, (-\Delta) \underline{u} + q \, L_k \underline{u} + g\cdot D\underline{u} \le 0  
\qquad \mbox{in } \Omega.
\end{equation}
\begin{enumerate}[label=$(\roman*)$,leftmargin=*]
\item \label{comparison01} If the inequality in~\eqref{0f9jio4} is strict, then 
\[
\sup_\Omega\underline{u}\leq\sup_{\R^n\setminus\Omega}\underline{u}.
\]
\item \label{comparison02} If~$q>0$ in~$\Omega$ and~$\underline{u}$
achieves a global maximum in $\Omega$, 
then $\underline{u}$ is constant.
\item \label{comparison03} If~$\inf_\Omega(p+q)>0$, then 
\[
\sup_\Omega\underline{u}\leq\sup_{\R^n\setminus\Omega}\underline{u}.
\]
\end{enumerate}
\end{lemma}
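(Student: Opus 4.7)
Parts (i) and (ii) rest on a direct interior-maximum argument. For (i), assume by contradiction that $\sup_\Omega \underline{u} > \sup_{\R^n \setminus \Omega} \underline{u}$: since $\underline{u}$ is continuous in a neighborhood of the compact set $\overline{\Omega}$, the supremum on $\overline{\Omega}$ is attained, and this assumption forces it to be attained at some interior point $x_0 \in \Omega$. There $D\underline{u}(x_0) = 0$ and $-\Delta \underline{u}(x_0) \geq 0$; moreover, $\underline{u}(x_0)$ is actually the global maximum of $\underline{u}$ on $\R^n$, so the integrand defining $L_k \underline{u}(x_0)$ is pointwise non-negative, giving $L_k\underline{u}(x_0) \geq 0$. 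Combined with $p, q \geq 0$, this yields $p(x_0)(-\Delta)\underline{u}(x_0) + q(x_0) L_k \underline{u}(x_0) + g(x_0) \cdot D\underline{u}(x_0) \geq 0$, contradicting the strict version of \eqref{0f9jio4}. For (ii), running the same chain of inequalities at the interior global maximum $x_0$ yields $q(x_0) L_k \underline{u}(x_0) \le 0$; the hypothesis $q(x_0) > 0$ and the pointwise non-negativity of the integrand then force $\underline{u}(x_0+z) = \underline{u}(x_0)$ for a.e.\ $z \in \R^n$, so $\underline{u}$ is a.e.\ constant.

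For part (iii) the plan is to reduce to (i) by a perturbation that promotes the weak inequality to a strict one. Choose $R > 0$ with $\overline{\Omega} \subset B_R$ and take $v = v_{\lambda, R}$ from Lemma~\ref{lem:aux} with $\lambda$ still to be selected. Inside $B_R$ the function $v$ coincides with the smooth positive function $e^{\lambda x_1}$, so $(-\Delta)v = -\lambda^2 v$, $|g \cdot Dv| \leq \|g\|_{L^\infty(\Omega)} \lambda v$, and Lemma~\ref{lem:aux} gives $L_k v \leq -C^{-1} \lambda^{-1} R^{-1-2s} e^{\lambda R/2} v$. Setting $\underline{u}_\varepsilon \coloneqq \underline{u} + \varepsilon v$, linearity and these estimates bound, in $\Omega$,
\[
p(-\Delta)\underline{u}_\varepsilon + q L_k \underline{u}_\varepsilon + g \cdot D\underline{u}_\varepsilon \leq \varepsilon \Big[ \|g\|_{L^\infty(\Omega)} \lambda - p \lambda^2 - q \, C^{-1} \lambda^{-1} R^{-1-2s} e^{\lambda R/2} \Big] v.
\]
Writing $\delta \coloneqq \inf_\Omega (p+q) > 0$, at every $x \in \Omega$ at least one of $p(x), q(x)$ exceeds $\delta/2$, so a sufficiently large choice of $\lambda$ (depending on $\delta$, $\|g\|_{L^\infty(\Omega)}$, $R$, and the constant in Lemma~\ref{lem:aux}) makes the bracket strictly negative throughout $\Omega$. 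Part (i) applied to $\underline{u}_\varepsilon$ then gives $\sup_\Omega \underline{u}_\varepsilon \leq \sup_{\R^n \setminus \Omega} \underline{u}_\varepsilon$, and since $0 \leq v \leq e^{\lambda R}$, sending $\varepsilon \to 0^+$ yields the conclusion.

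The only real obstacle is part (iii): because the hypothesis is merely $p+q \ge \delta > 0$ pointwise, a barrier relying on only one of the two operators would fail at points where the corresponding coefficient degenerates. The crucial feature of Lemma~\ref{lem:aux} is that a single function $v$ produces simultaneously a negative Laplacian contribution of order $\lambda^2 v$ and an even larger negative nonlocal contribution of order $\lambda^{-1} R^{-1-2s} e^{\lambda R/2} v$, both of which overwhelm the gradient term $\|g\|_{L^\infty(\Omega)} \lambda v$ once $\lambda$ is large. Whichever of $p, q$ is bounded below at a given point, the associated contribution supplies the desired strict sign, exactly matching the structural assumption $p + q \ge \delta$.
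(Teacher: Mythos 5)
Your proposal is correct and follows essentially the same route as the paper: interior-maximum arguments for (i) and (ii), and for (iii) the perturbation $\underline{u}+\varepsilon v_{\lambda,R}$ using Lemma~\ref{lem:aux} to make the inequality strict, followed by an appeal to (i) and the limit $\varepsilon\to0^+$. The only (harmless) slip is the bound $v\le e^{\lambda R}$, which should read $v\le e^{2\lambda R}$ since $v$ is supported in $B_{2R}$; this does not affect the conclusion.
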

\begin{proof}
Without loss of generality, we can suppose that
\[
\sup_{\R^n\setminus\Omega}\underline{u}<+\infty,
\]
otherwise our claims are trivial.

Suppose that $\underline{u}$ has a global maximum at $x_0\in\Omega$. Then
\[
-\Delta\underline{u}(x_0)\geq 0,\quad
L_k\underline{u}(x_0)\geq 0, \quad
D\underline{u}(x_0)=0,
\]
and therefore
\begin{equation}\label{f90j4nf}
p(x_0) \, (-\Delta) \underline{u}(x_0) + q(x_0) \, L_k\underline{u}(x_0) + g(x_0)\cdot D\underline{u}(x_0) =
p(x_0) \, (-\Delta) \underline{u}(x_0) + q(x_0) \, L_k\underline{u}(x_0) \geq 0.
\end{equation}
This is not compatible with~\eqref{0f9jio4} holding with a strict inequality. Hence, point~\ref{comparison01} is established.

On the other hand,~\eqref{f90j4nf} can hold alongside~\eqref{0f9jio4} with~$q>0$ only if~$L_k\underline{u}(x_0)=0$,
which is in turn only possible when~$\underline{u}$ is constant 
(given the fact that at $x_0$ a global maximum is reached). This proves~\ref{comparison02}.

We turn to the proof of~\ref{comparison03}. Let~$R>0$ be fixed in such a way that~$\Omega\subset B_R$ and~$v = v_{\lambda, R}$ be as in~\eqref{vlemaux}. An application of Lemma~\ref{lem:aux} and straightforward computations give that
\[
-\Delta v(x)=-\lambda^2v(x),\quad
L_k v(x) \leq - \frac{1}{C} \frac{e^{\lambda R/2}}{\lambda R^{1+2s}}\,v(x),\quad
Dv(x)=\lambda v(x)\,e_1,
\qquad\text{for every }x\in\Omega,
\]
for some~$C \ge 1$, depending only on~$n$,~$s$,~$\kappa_1$, and~$\kappa_2$, and provided~$\lambda \ge C R^{-1}$.
Define now~$u_\eps\coloneqq\underline{u}+\eps v$, for~$\eps>0$. Taking~\eqref{0f9jio4} into account, we see that
\[
p \, (-\Delta) u_\eps + q \, L_k u_\eps + g\cdot D u_\eps \le 
\eps \Big( {p \, (-\Delta) v + q \, L_k v + g\cdot D v} \Big) \leq
\eps \bigg( {-\lambda^2 p - \frac{q}{C} \frac{e^{\lambda R/2}}{\lambda R^{1+2s}} + \lambda g\cdot e_1} \bigg) v
\quad\text{in }\Omega.
\]
Using that~$\inf_\Omega(p+q)>0$ and~$v > 0$ in~$\Omega$, by choosing~$\lambda$ sufficiently large we then get that
\[
p \, (-\Delta) u_\eps + q \, L_k u_\eps + g\cdot D u_\eps < 0
\qquad\text{in }\Omega,
\]
By point~\ref{comparison01}, we deduce from the above inequality that
\[
\underline{u}<u_\eps\leq\sup_\Omega u_\eps\leq\sup_{\R^n\setminus\Omega}u_\eps
\leq\sup_{\R^n\setminus\Omega}\underline{u}+\eps e^{2 \lambda R}
\qquad\text{in } \Omega,
\]
for every~$\varepsilon > 0$. By sending~$\eps\to 0^+$ we obtain the estimate claimed in~\ref{comparison03}.
\end{proof}

As a consequence of the above maximum principle we deduce the following.

\begin{proposition}\label{prop:comparison}
Let~$\Omega\subseteq\R^n$ be a bounded open set and~$k$ be a kernel satisfying~\eqref{k}, for some~$s \in (0, 1)$ and~$\kappa_2 \ge \kappa_1 > 0$. Let~$g\in L^\infty(\Omega)$ and~$p, q: \Omega \to \R$ be two measurable non-negative functions.
Let~$\underline{u} \in C^2(\Omega) \cap C^0(\R^n)$ be satisfying
\begin{equation}\label{0f9jio42}
\left\lbrace\begin{aligned}
p \, (-\Delta) \underline{u} + q \, L_k \underline{u} + g\cdot D\underline{u} &\le 0 && \mbox{in } \Omega, \\
\underline{u} &\le 0 && \mbox{on } \partial \Omega, \\
\underline{u} &\le 0 && \mbox{in } \R^n \setminus \overline{\Omega},
\end{aligned}\right.
\end{equation}
and assume either one of the following conditions to hold: 
\begin{enumerate}[label=$(\roman*)$,leftmargin=*]
\item the inequality on the first line of~\eqref{0f9jio42} is strict;
\item $q>0$ in~$\Omega$;
\item $\inf_\Omega(p+q)>0$.
\end{enumerate}
Then,~$\underline{u} \le 0$ in~$\R^n$.
\end{proposition}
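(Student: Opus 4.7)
\textbf{Proof plan for Proposition~\ref{prop:comparison}.} The strategy is to reduce each of the three situations to the corresponding case of Lemma~\ref{lem:comparison0}, which already controls $\sup_\Omega \underline u$ in terms of $\sup_{\R^n\setminus\Omega}\underline u$. Since by~\eqref{0f9jio42} we know that $\underline u\le 0$ on $\partial\Omega$ and $\underline u\le 0$ in $\R^n\setminus\overline\Omega$, we automatically have $\sup_{\R^n\setminus\Omega}\underline u\le 0$, so the desired conclusion $\underline u\le 0$ on all of $\R^n$ will follow as soon as we have $\sup_\Omega\underline u\le\sup_{\R^n\setminus\Omega}\underline u$.

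For cases (i) and (iii), this is immediate: one just verifies that the hypotheses of Lemma~\ref{lem:comparison0}\ref{comparison01}, respectively~\ref{comparison03}, are satisfied (they are literally the same), and directly invokes the lemma to obtain the required inequality.

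Case (ii) is the only one that needs a genuine argument, because Lemma~\ref{lem:comparison0}\ref{comparison02} is phrased conditionally on $\underline u$ attaining a global maximum in $\Omega$. The plan here is to argue by contradiction: suppose $M\coloneqq\sup_{\R^n}\underline u>0$. Since $\underline u\le 0$ outside $\Omega$ and $\underline u$ is continuous on $\R^n$, the supremum $M$ is actually attained on $\overline\Omega$; the boundary condition $\underline u\le 0$ on $\partial\Omega$ forces the maximum point $x_0$ to lie in the open set $\Omega$. Then $x_0$ is simultaneously a global maximum of $\underline u$ on $\R^n$, so Lemma~\ref{lem:comparison0}\ref{comparison02} applies (using $q>0$) and yields that $\underline u\equiv M>0$ on $\R^n$, contradicting the outer sign condition. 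Hence $M\le 0$ and the conclusion follows.

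I do not expect any serious obstacle beyond the (minor) bookkeeping in case (ii), namely justifying that the global maximum is truly attained and truly lies inside $\Omega$. Continuity of $\underline u$ on $\R^n$ together with the sign condition $\underline u\le 0$ outside $\Omega$ handle both points at once, so the proof should be short.
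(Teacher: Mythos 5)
Your proof is correct and is exactly the argument the paper intends: the proposition is presented as an immediate consequence of Lemma~\ref{lem:comparison0}, with cases (i) and (iii) following directly from parts~\ref{comparison01} and~\ref{comparison03} together with the sign conditions on $\partial\Omega$ and $\R^n\setminus\overline\Omega$, and case (ii) handled by the contradiction argument you describe (a positive supremum would be attained at an interior point by compactness of $\overline\Omega$ and the boundary condition, forcing $\underline{u}$ to be a positive constant via part~\ref{comparison02}, which contradicts the exterior sign condition). No gaps.
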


Thanks to the maximum principle, we can also establish an \textit{a priori} estimate for subsolutions of the Dirichlet problem~\eqref{dir} ensuring linear growth from the boundary.

\begin{theorem} \label{lineargrowththm}
Let~$\Omega\subseteq\R^n$ be a bounded open set having the exterior ball property and~$k$ be a kernel satisfying~\eqref{k}, for some~$s \in (0, 1)$ and~$\kappa_2 \ge \kappa_1 > 0$. Let~$g\in L^\infty(\Omega)$ and~$p, q: \Omega \to \R$ be two measurable non-negative functions satisfying
\[
\inf_\Omega p > 0 
\qquad \text{and} \qquad 
\sup_\Omega q <+\infty.
\]
Let~$f: \Omega \to \R$ be a measurable function bounded from above. If~$u \in C^2(\Omega) \cap C^0(\R^n)$ satisfies
\[
\left\lbrace\begin{aligned}
p \, (-\Delta) u + q \, L_k u + g\cdot Du &\le f && \mbox{in } \Omega, \\
u &\le 0 && \mbox{on } \partial \Omega, \\
u &\le 0 && \mbox{in } \R^n \setminus \overline{\Omega},
\end{aligned}\right.
\]
then
\[
\| u_+ \|_{C^0_{-1}(\Omega)} \le C \| f_+ \|_{L^\infty(\Omega)},
\]
for some constant~$C > 0$ depending only on~$n$,~$s$,~$\kappa_1$,~$\kappa_2$,~$\Omega$,~$\inf_\Omega p$,~$\sup_\Omega q$, and~$\|g\|_{L^\infty(\Omega)}$.
\end{theorem}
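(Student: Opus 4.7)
The plan is to reduce the claim to the construction of a barrier $\varphi: \R^n \to [0, +\infty)$, of class $C^2$ inside $\Omega$, such that
\[
\varphi(x) \le C_1 \, d(x, \partial\Omega) \quad \text{in } \Omega, \qquad p \, (-\Delta)\varphi + q \, L_k\varphi + g \cdot D\varphi \ge 1 \quad \text{in } \Omega,
\]
with $C_1$ depending only on the constants allowed in the conclusion. Once this is in hand, Proposition~\ref{prop:comparison}\ref{comparison03} applied to $w \coloneqq u - \| f_+ \|_{L^\infty(\Omega)}\varphi$ (which is $\le 0$ on $\R^n \setminus \Omega$ since $u \le 0$ there and $\varphi \ge 0$, and whose image under the mixed operator is $\le f - \| f_+ \|_{L^\infty} \le 0$ in $\Omega$) yields $u \le \| f_+ \|_{L^\infty} \varphi \le C_1 \| f_+ \|_{L^\infty} \, d(\cdot, \partial\Omega)$ in $\Omega$, which is exactly $\| u_+ \|_{C^0_1(\Omega)} \le C \| f_+ \|_{L^\infty}$.

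I would build $\varphi$ in two stages. First I produce a crude uniform bound by using the global test function $\Phi(x) \coloneqq e^{2\lambda R} - e^{\lambda x_1} \chi_{B_{2R}}(x)$, with $R > 0$ chosen so that $\Omega \subset B_R$. Then $\Phi \ge 0$ in $\R^n$, it is bounded, and direct computation in $\Omega$ gives $-\Delta \Phi = \lambda^2 e^{\lambda x_1}$ and $|D \Phi| \le \lambda e^{\lambda x_1}$, while Lemma~\ref{lem:aux} produces $L_k \Phi(x) = - L_k\bigl( e^{\lambda x_1} \chi_{B_{2R}} \bigr)(x) \ge c_\lambda \, e^{\lambda x_1}$ for $\lambda R$ sufficiently large. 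Combining these with $\inf_\Omega p > 0$ and raising $\lambda$ further to absorb the drift and nonlocal contributions, $\Phi$ becomes a supersolution of the mixed operator; comparing with $\|f_+\|_{L^\infty}\Phi/c_\lambda$ via Proposition~\ref{prop:comparison}\ref{comparison03} yields a uniform $L^\infty$ bound $\sup_\Omega u_+ \le M_0 \| f_+ \|_{L^\infty}$ depending only on the listed constants.

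Next I exploit the exterior ball property to obtain linear boundary decay. For each $x_0 \in \partial \Omega$ with exterior ball $B_{r_0}(y_0)$ tangent at $x_0$, I consider the local barrier
\[
\psi_{x_0}(x) \coloneqq \bigl( r_0^{-\gamma} - |x - y_0|^{-\gamma} \bigr)_+,
\]
with $\gamma > n-2$ to be chosen. Outside $\overline{B_{r_0}(y_0)}$ the function is smooth, nonnegative, bounded, and Lipschitz, with $-\Delta \psi_{x_0}(x) = \gamma(\gamma + 2 - n) |x - y_0|^{-\gamma - 2}$ strictly positive and $|D \psi_{x_0}|$ bounded by $\gamma r_0^{-\gamma - 1}$; moreover, by convexity of $t \mapsto t^{-\gamma}$ and the triangle inequality, $\psi_{x_0}(x) \le \gamma r_0^{-\gamma - 1} (|x - y_0| - r_0) \le C |x - x_0|$. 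The nonlocal term $L_k \psi_{x_0}$ I would estimate by the standard splitting at scale $\sigma \coloneqq \min\{ d(x, \partial B_{r_0}(y_0)), 1 \}$: in the inner region the second-order Taylor remainder is controlled by $|D^2 \psi_{x_0}| \lesssim r_0^{-\gamma - 2}$ against $\int_{|z|<\sigma} |z|^2 k(z) \, dz \lesssim \sigma^{2-2s}$, and in the outer region $2 \| \psi_{x_0} \|_\infty$ is controlled against $\int_{|z|>\sigma} k(z) \, dz \lesssim \sigma^{-2s}$. Since the principal term $p(-\Delta) \psi_{x_0}$ grows like $p_0 \gamma^2 r_0^{-\gamma-2}$ while the drift and nonlocal contributions grow at most linearly in $\gamma$, taking $\gamma$ large enough (depending on $\inf p$, $\sup q$, $\| g \|_\infty$, $\kappa_1$, $\kappa_2$, $n$, $s$, $r_0$) yields the supersolution inequality $p(-\Delta)\psi_{x_0} + qL_k\psi_{x_0} + g \cdot D\psi_{x_0} \ge c > 0$ on the annular region $\Omega \cap \{ r_0 < |x - y_0| < r_0 + \rho \}$ for a uniform $\rho > 0$.

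The global barrier $\varphi$ is then assembled by taking a multiple of $\psi_{\bar x}$ (with $\bar x$ a nearest boundary projection of $x$) in the tubular neighborhood $\{ d(\cdot, \partial\Omega) < \rho/2 \}$ and using the $L^\infty$ bound $M_0 \| f_+ \|_{L^\infty}$ from the first stage on the bulk, where $d(\cdot, \partial\Omega) \ge \rho/2$ makes the bound automatically linear in the distance. The combination is done through Proposition~\ref{prop:comparison}\ref{comparison03} applied on nested domains so that the "outer" data provided by the $L^\infty$ bound respects the nonlocal nature of $L_k$. The main obstacle I expect is the rigorous analysis of $L_k \psi_{x_0}$ near the sphere $\partial B_{r_0}(y_0)$, where $\psi_{x_0}$ loses $C^2$ regularity and the naive bound of the integral degenerates; this is what forces a careful principal-value splitting and the choice of $\gamma$ large enough to absorb the degenerate piece. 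A secondary subtlety is that minima of supersolutions for a nonlocal operator are not supersolutions in general, so the gluing between local and global information must be carried out through comparison on nested subsets rather than pointwise minimization.
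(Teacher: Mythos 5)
Your first stage (the crude $L^\infty$ bound via the exponential barrier $e^{2\lambda R}-v_{\lambda,R}$ and Lemma~\ref{lem:aux}) coincides with the paper's argument, and your overall two-stage architecture and the gluing via comparison on $\Omega\cap B_{(1+\delta)r_0}(y_z)$ are also the right ones. The genuine gap is in the local barrier. The classical Hopf barrier $\psi_{x_0}(x)=\big(r_0^{-\gamma}-|x-y_0|^{-\gamma}\big)_+$ has a Laplacian that stays \emph{bounded} on the annulus $r_0<|x-y_0|<r_0+\rho$ (it equals $\gamma(\gamma+2-n)|x-y_0|^{-\gamma-2}$, trapped between two constants depending on $\gamma$), whereas $L_k\psi_{x_0}(x)$ tends to $-\infty$ as $x$ approaches the sphere whenever $s\ge\frac12$: near the contact set $\psi_{x_0}$ is a convex Lipschitz kink ($\approx \gamma r_0^{-\gamma-1}\,t_+$ in the radial variable $t=|x-y_0|-r_0$), so the sharp estimate is $L_k\psi_{x_0}\gtrsim -\,t^{1-2s}$ for $s>\frac12$ and $\gtrsim \log t$ for $s=\frac12$, both unbounded below. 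Since $q$ is only assumed non-negative and bounded (it may well be $q\equiv 1$), no choice of $\gamma$ can make a bounded positive term $p\,(-\Delta)\psi_{x_0}$ absorb an unbounded negative term $q\,L_k\psi_{x_0}$; your own splitting at scale $\sigma=t$ already produces a bound $\sigma^{-2s}\to\infty$ in the outer region, confirming the degeneration rather than curing it. You flag this as the ``main obstacle'' but the proposed fix (``$\gamma$ large enough'') cannot work for $s\ge\frac12$; your barrier only suffices when $s<\frac12$, where $\int_{B_1}|z|\,k(z)\,dz<\infty$ keeps $L_k$ of a Lipschitz function bounded.

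The paper resolves this by choosing a barrier whose Laplacian itself blows up at the contact sphere, faster than the nonlocal term: $\psi(x)=(|x|-r_0)-\frac{(|x|-r_0)^{1+\sigma}}{1+\sigma}$ on $B_{2r_0}\setminus\overline B_{r_0}$ with $\sigma\in\big(0,\min\{2-2s,1\}\big)$, so that $-\Delta\psi\ge\frac{\sigma}{2}(|x|-r_0)^{\sigma-1}$ while $L_k\psi\ge -C\,r_0^{1-2s}A_s\!\left(\frac{|x|-r_0}{r_0}\right)$ with $A_s(r)\in\{1,-\log r, r^{1-2s}\}$; the condition $\sigma-1<1-2s$ is exactly what lets the local term dominate on a thin annulus $B_{(1+\delta)r_0}\setminus\overline B_{r_0}$. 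To repair your argument you would need to replace your $\psi_{x_0}$ by a barrier of this degenerate type (or otherwise produce a supersolution whose local part blows up at rate at least $t^{\sigma-1}$ with $\sigma<2-2s$ near the contact point); as written, the supersolution inequality on which your second stage rests is false for $s\ge\frac12$.
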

\begin{proof}
We begin by showing that the~$C^0_{-1}(\Omega)$ norm of~$u_+$ can be bounded in terms of the suprema of~$u_+$ and~$f_+$, that is
\begin{equation} \label{lineargrowthtechine}
\| u_+ \|_{C^0_{-1}(\Omega)} \le C \Big( {\| u_+ \|_{L^\infty(\Omega)} + \| f_+ \|_{L^\infty(\Omega)}} \Big),
\end{equation}
for some constant~$C > 0$ depending only on~$n$,~$s$,~$\kappa_2$,~$\Omega$,~$\inf_\Omega p$,~$\sup_\Omega q$, and~$\|g\|_{L^\infty(\Omega)}$.

As~$\Omega$ has the exterior ball property, there exists a radius~$r_0 \in (0, 1)$ such that, corresponding to each point~$z \in \partial \Omega$ there is a point~$y_z \in \R^n \setminus \Omega$ such that~$\overline{B_{r_0}(y_z)} \cap \overline{\Omega} = \{ z \}$. It is immediate to verify that~\eqref{lineargrowthtechine} will be proved if we show that there exist two constants~$C \ge 1$ and~$\delta \in \left( 0, \frac{1}{2} \right]$, both depending only on~$n$,~$s$,~$\kappa_2$,~$r_0$,~$\inf_\Omega p$,~$\sup_\Omega q$, and~$\|g\|_{L^\infty(\Omega)}$, such that
\begin{equation} \label{ulessthanlinearnearbdry}
u(x) \le C \Big( {\| u_+ \|_{L^\infty(\Omega)} + \| f_+ \|_{L^\infty(\Omega)}} \Big) |x - z| \quad \mbox{for every } z \in \partial \Omega \mbox{ and } x \in \Omega \cap B_{(1 + \delta) r_0}(y_z).
\end{equation}

Let~$z \in \partial \Omega$ be fixed. Up to a translation, we may assume that~$y_z = 0$, so that in particular~$z \in \partial B_{r_0}$. Let~$\sigma \in \left( 0, \min \big\{ {2 - 2 s, 1} \big\} \right)$ and consider the radially symmetric function
\[
\psi(x) \coloneqq \begin{dcases}
0 & \quad \mbox{if } x \in \overline{B}_{r_0}, \\
|x| - r_0 - \frac{\big( {|x| - r_0} \big)^{1 + \sigma}}{1 + \sigma} & \quad \mbox{if } x \in B_{2 r_0} \setminus \overline{B}_{r_0}, \\
r_0 - \frac{r_0^{1 + \sigma}}{1 + \sigma} & \quad \mbox{if } x \in \R^n \setminus B_{2 r_0}.
\end{dcases}
\]
Clearly,~$\psi$ is globally bounded and Lipschitz continuous, smooth inside~$B_{2 r_0} \setminus \overline{B}_{r_0}$. Moreover, it is radially non-decreasing and thus non-negative, since~$r_0 \le 1$. We claim that
\begin{equation} \label{psisuperclaim}
p \, (- \Delta) \psi + q \, L_k \psi + g \cdot D\psi \ge 1 \quad \mbox{in } B_{(1 + \delta) r_0} \setminus \overline{B}_{r_0},
\end{equation}
if~$\delta \in \left( 0, \frac{1}{2} \right]$ is sufficiently small, in dependence of~$n$,~$s$,~$\kappa_2$,~$r_0$,~$\inf_\Omega p$,~$\sup_\Omega q$, and~$\|g\|_{L^\infty(\Omega)}$, and~$\sigma$ only.

In order to establish~\eqref{psisuperclaim}, we compute, for~$x \in B_{2 r_0} \setminus \overline{B}_{r_0}$,
\begin{equation} \label{DpsiD2psi}
\begin{aligned}
D \psi(x) & = \left( 1 - \big( {|x| - r_0} \big)^{\sigma} \right) \frac{x}{|x|}, \\
D^2 \psi(x) & = - \sigma \big( {|x| - r_0} \big)^{\sigma - 1} \, \frac{x \otimes x}{|x|^2} + \left( 1 - \big( {|x| - r_0} \big)^{\sigma} \right) \left( \frac{I_n}{|x|} - \frac{x \otimes x}{|x|^3} \right),
\end{aligned}
\end{equation}
where~$I_n$ is the~$n \times n$ identity matrix. From this, it follows in particular that
\begin{equation} \label{psisuperharm}
\begin{aligned}
- \Delta \psi(x) & \ge \sigma \big( {|x| - r_0} \big)^{\sigma - 1} - \frac{n - 1}{r_0} \ge \frac{\sigma}{2} \big( {|x| - r_0} \big)^{\sigma - 1} \\
g(x)\cdot D\psi(x) & \ge -\|g\|_{L^\infty(\Omega)}
\end{aligned}
\qquad \mbox{for all } x \in B_{(1 + \delta) r_0} \setminus \overline{B}_{r_0},
\end{equation}
provided~$\delta \le \left( \frac{\sigma r_0^\sigma}{2 (n - 1)} \right)^{\! \frac{1}{1 - \sigma}}$. 
We now estimate~$L_k \psi$. To this aim, we write
\[
L_k \psi(x) = I_1(x) + I_2(x) + I_3(x) + I_4(x),
\]
where
\begin{align*}
I_1(x) & \coloneqq - \int_{B_{|x| - r_0}} \big( {\psi(x + z) - \psi(x) - D \psi(x) \cdot z} \big) k(z) \, dz, \\
I_2(x) & \coloneqq \psi(x) \int_{B_{r_0}} k(y - x) \, dy, \\
I_3(x) & \coloneqq \big( {\psi(x) - \psi(2 r_0 e_1)} \big) \int_{\R^n \setminus B_{2 r_0}} k(y - x) \, dy, \\
I_4(x) & \coloneqq \int_{B_{2 r_0} \setminus \left( B_{r_0} \cup B_{|x| - r_0}(x) \right)} \big( {\psi(x) - \psi(y)} \big) k(y - x) \, dy.
\end{align*}
We stress that, here and elsewhere in the paper,~$B_r(p)$ denotes the open ball of radius~$r$ centered at the point~$p$, while~$B_r$ stands for the ball of radius~$r$ centered at the origin, \textit{i.e.},~$B_r = B_r(0)$. By Taylor expansion, for every~$z \in B_{|x| - r_0}$ there exists~$p = p(x, z) \in B_{|z|}(x) \subset B_{2 r_0} \setminus \overline{B}_{r_0}$ such that
\[
\psi(x + z) - \psi(x) - D \psi(x) \cdot z = \frac{1}{2} \langle D^2 \psi(p) z, z \rangle.
\]
Recalling~\eqref{DpsiD2psi}, we deduce that
\[
\psi(x + z) - \psi(x) - D \psi(x) \cdot z = - \frac{\sigma}{2} \, \big( {|p| - r_0} \big)^{\sigma - 1} \, \frac{(p \cdot z)^2}{|p|^2} + \frac{1 - \big( {|p| - r_0} \big)^{\sigma}}{2} \left( \frac{|z|^2}{|p|} - \frac{(p \cdot z)^2}{|p|^3} \right) \le \frac{|z|^2}{r_0},
\]
and thus, thanks to assumption~\eqref{k},
$$
I_1(x) \ge - \frac{\kappa_2}{r_0} \int_{B_{|x| - r_0}} \frac{dz}{|z|^{n - 2 + 2 s}} = - \frac{\mathcal{H}^{n - 1}(\partial B_1) \, \kappa_2}{2 (1 - s)} \frac{\big( {|x| - r_0} \big)^{2 - 2 s}}{r_0}.
$$
On the other hand, recalling the definition of~$\psi$,~\eqref{DpsiD2psi}, and again~\eqref{k}, we simply estimate
$$
I_2(x) \ge 0, \qquad I_3(x) \ge - \kappa_2 \, r_0 \int_{\R^n \setminus B_{\frac{r_0}{2}}} \frac{dz}{|z|^{n + 2 s}} \ge - \frac{2 \, \mathcal{H}^{n - 1}(\partial B_1) \, \kappa_2}{s} \, r_0^{1 - 2 s},
$$
and
$$
I_4(x) \ge - \kappa_2 \| D\psi \|_{L^\infty(B_{2 r_0})} \int_{B_{4 r_0} \setminus B_{|x| - r_0}} \frac{dz}{|z|^{n - 1 + 2 s}} \ge - C \, r_0^{1 - 2 s} \, A_s \left( \frac{|x| - r_0}{r_0} \right),
$$
for some constant~$C > 0$, depending only on~$n$,~$s$, and~$\kappa_2$, and where
$$
A_s(r) \coloneqq \begin{cases}
1 & \quad \mbox{if } s \in \left( 0, \frac{1}{2} \right), \\
- \log r & \quad \mbox{if } s = \frac{1}{2}, \\
r^{1 - 2 s} & \quad \mbox{if } s \in \left( \frac{1}{2}, 1 \right),
\end{cases}
$$
for~$r \in (0, 1)$. All in all, we found that
\begin{equation} \label{eq:Lkge-}
L_k \psi(x) \ge - C r_0^{1 - 2 s} A_s \! \left( \frac{|x| - r_0}{r_0} \right) \quad \mbox{for every } x \in B_{(1 + \delta) r_0} \setminus \overline{B}_{r_0}.
\end{equation}
By comparing this with~\eqref{psisuperharm}, we are easily led to~\eqref{psisuperclaim}, provided we take~$\delta$ sufficiently small.

We are now in a position to establish~\eqref{ulessthanlinearnearbdry}. Indeed, as~$\psi \ge 0$ in~$B_{(1 + \delta) r_0}$,~$\psi \ge \kappa \coloneqq \psi \big( {(1 + \delta) r_0 e_1} \big)$ in~$\R^n \setminus B_{(1 + \delta) r_0}$, and~\eqref{psisuperclaim} holds true, we get that~$v \coloneqq u - M \psi$, with~$M \coloneqq \kappa^{-1} \| u_+ \|_{L^\infty(\Omega)} + \| f_+ \|_{L^\infty(\Omega)}$, satisfies
$$
\left\lbrace\begin{aligned}
p \, (-\Delta) v + q \, L_k v + g\cdot Dv &\le 0 && \mbox{in } \Omega \cap B_{(1 + \delta) r_0}, \\
v &\le 0 && \mbox{in } \R^n \setminus \big( {\Omega \cap B_{(1 + \delta) r_0}} \big).
\end{aligned}\right.
$$
By the maximum principle of Proposition~\ref{prop:comparison}, we conclude that~$v \le 0$ also in~$\Omega \cap B_{(1 + \delta) r_0}$, and therefore that~$u(x) \le M \big( {|x| - r_0} \big) \le M |x - z|$ for every~$x \in \Omega \cap B_{(1 + \delta) r_0}$, which is~\eqref{ulessthanlinearnearbdry}.

We have established that~\eqref{lineargrowthtechine} holds true. In order to conclude the proof of the theorem, it thus suffices to show that
\begin{equation} \label{lineargrowthtechine2}
\| u_+ \|_{L^\infty(\Omega)} \le C \| f_+ \|_{L^\infty(\Omega)}.
\end{equation}
We do this by means of another barrier, arguing as in the proof of~\cite{GT01}*{Theorem~3.7}. Let~$R > 0$ be large enough to have that~$\Omega \subset B_R$ and~$\phi \coloneqq e^{2 \lambda R} - v_{\lambda, R}$, with~$v_{\lambda, R}$ as in~\eqref{vlemaux}. Clearly,~$\phi \ge 0$ in~$\R^n$. In addition, via the same computations performed in the proof of Lemma~\ref{lem:comparison0} it is immediate to see that
$$
p \, (-\Delta) \phi + q \, L_k \phi + g \cdot D\phi \ge 1 \quad \mbox{in } \Omega,
$$
provided~$\lambda > 0$ is chosen sufficiently large, in dependence of~$n$,~$s$,~$\kappa_1$,~$\kappa_2$,~$R$,~$\inf_\Omega p$, and~$\| g \|_{L^\infty(\Omega)}$ only. Then, the function~$w \coloneqq u - \| f_+ \|_{L^\infty(\Omega)} \, \phi$ satisfies
$$
\left\lbrace\begin{aligned}
p \, (-\Delta) w + q \, L_k w + g\cdot Dw &\le 0 && \mbox{in } \Omega, \\
w &\le 0 && \mbox{in } \R^n \setminus \Omega.
\end{aligned}\right.
$$
Invoking again Proposition~\ref{prop:comparison}, we infer that~$w \le 0$ in~$\Omega$, which gives~\eqref{lineargrowthtechine2}. The proof is thus complete.
\end{proof}

We point out that, when~$L_k$ is~$2s$-stable---\textit{i.e.}, when its kernel~$k$ is homogeneous---, it is possible to obtain~\eqref{eq:Lkge-} through a simpler computation based on the representation formula provided in~\cite{MR4038144}*{Lemma~2.4}.

\section{Existence, uniqueness and boundary regularity for~\texorpdfstring{\eqref{dir}}{dir}. Proof of Theorem~\texorpdfstring{\ref{mainlinearexthmm}}{mainlinearexthmm}}\label{sec:main}

We prove here Theorem~\ref{mainlinearexthmm}, concerning the existence, uniqueness, and regularity of solutions to the Dirichlet problem~\eqref{dir}. 
To establish it, we move the nonlocal term~$q \, L_k u$ to the right-hand side and run a fixed-point argument based on the solvability properties of the (standard) Laplacian.

To this aim, it is paramount to understand the regularity of the operator~$L_k$ applied to smooth functions which are only Lipschitz continuous across the boundary of~$\Omega$---recall that solutions of problem~\eqref{dir} (with, say,~$p, q \equiv 1$,~$g \equiv 0$, and $\kappa_1=\kappa_2$ in~\eqref{k}) are typically no better than Lipschitz in~$\R^n$, thanks, \textit{e.g.}, to the Hopf lemma of~\cite{BM21}*{Theorem~2.2} or~\cite{AC23}*{Theorem~1.2}.

In light of these observations, we proceed to deal separately with the two cases~$s \in \left( 0, \frac{1}{2} \right)$ and~$s \in \left[ \frac{1}{2}, 1 \right)$, since the regularity properties of the solutions, as well as the functional spaces used to measure them, change significantly.

\subsection{The case~\texorpdfstring{$s \in \left( 0, \frac{1}{2} \right)$}{0<s<1/2}.}

We prove here the following statement, which implies in particular Theorem~\ref{mainlinearexthmm} in the case~$s < \frac{1}{2}$.

\begin{proposition} \label{mainlinears<12prop}
Let~$\Omega \subset \R^n$ be a bounded open set with boundary of class~$C^{2, \alpha}$, for some~$\alpha \in (0, 1)$. Let~$k$ be a kernel satisfying~\eqref{k}, for some~$s \in \left( 0, \frac{1}{2} \right)$ and~$\kappa_2 \ge \kappa_1 > 0$. Let~$p, q \in C^\alpha(\overline\Omega)$ be two non-negative functions, with~$p$ satisfying~$\inf_\Omega p > 0$. Let~$f,g \in C^\alpha(\overline\Omega)$.

Then, problem~\eqref{dir} has a unique solution~$u \in C^2(\Omega) \cap C^0(\R^n)$. Moreover,~$u \in C^{2, \beta}(\overline\Omega)$, with~$\beta = \min \{ 1 - 2 s, \alpha \}$, and it satisfies
\begin{equation} \label{C2betaest}
\| u \|_{C^{2, \beta}(\overline\Omega)} \le C \| f \|_{C^\alpha(\overline\Omega)},
\end{equation}
for some constant~$C > 0$ depending only on~$n$,~$s$,~$\alpha$,~$\kappa_1$,~$\kappa_2$,~$\Omega$,~$\| p \|_{C^\alpha(\overline\Omega)}$,~$\inf_\Omega p$, and~$\| q \|_{C^\alpha(\overline\Omega)}$.
\end{proposition}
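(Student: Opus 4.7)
The plan is to invert the Laplacian by treating the nonlocal term $q\,L_k u$ as a lower-order right-hand side and close the argument by a Schauder-type fixed-point method, exploiting that for $s \in (0, 1/2)$ the operator $L_k$ has order $2s < 1$ and is therefore ``subordinate'' to $-\Delta$. I would introduce the Banach space
\[
X \coloneqq \Big\{ u \in C^{1,\beta}(\Omega) : u|_{\partial \Omega} = 0 \Big\},
\]
with each $u \in X$ tacitly extended by zero to $\R^n \setminus \Omega$ (so $u$ is globally Lipschitz on $\R^n$), and, for $u \in X$, define $Tu$ to be the unique $C^{2,\beta}(\Omega)$ solution of
\[
\begin{cases}
p \, (-\Delta)(Tu) + g \cdot D(Tu) = f - q \, L_k u & \text{in } \Omega, \\
Tu = 0 & \text{on } \partial \Omega,
\end{cases}
\]
provided by classical Schauder theory, as soon as the right-hand side is known to sit in $C^\beta(\Omega)$.

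The crux of the plan is a Schauder-type bound for $L_k$ itself, namely that for $s \in (0,1/2)$ and $\beta = \min\{1-2s,\alpha\}$ one has
\[
\| L_k u \|_{C^\beta(\Omega)} \le C \, \| u \|_{C^{1,\beta}(\Omega)} \qquad \text{for every } u \in X.
\]
Since $2s < 1$, the integral defining $L_k u(x)$ converges absolutely and is uniformly controlled by the Lipschitz norm of $u$. For the H\"older modulus I would split $L_k u(x) - L_k u(y)$ into a near-diagonal part (handled by Taylor expansion around $x$, exploiting the symmetry of $k$ and the interior $C^{1,\beta}$ regularity of $u$) and a tail part (controlled by $\| u \|_{L^\infty}$). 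The dominant contribution near $\partial \Omega$ scales like $d_x^{1-2s}$ because of the merely Lipschitz (not $C^{1,\alpha}$) character of $u$ across the boundary, which is exactly where the exponent $1-2s$ originates in the statement.

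Combining this Key Lemma with classical Schauder theory for $p\,(-\Delta)+g\cdot D$ shows that $T$ sends $X$ continuously into $C^{2,\beta}(\Omega)$, and hence compactly into $X$. To apply Schaefer's (Leray--Schauder) fixed-point theorem, I would verify that the set
\[
\Big\{ u \in X : u = \lambda \, Tu \text{ for some } \lambda \in [0,1] \Big\}
\]
is bounded in $X$. Any such $u$ solves a problem of the form~\eqref{dir} with right-hand side $\lambda f$ and nonlocal coefficient $\lambda q \ge 0$, so Theorem~\ref{lineargrowththm} gives $\| u \|_{C^0_1(\Omega)} \le C \, \| f \|_{L^\infty(\Omega)}$ uniformly in $\lambda \in [0,1]$. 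Iterating the Schauder estimate together with the Key Lemma, and interpolating between $\| u \|_{L^\infty}$ and $\| u \|_{C^{2,\beta}}$ to absorb the $\| u \|_{C^{1,\beta}}$ term, one upgrades this to $\| u \|_{C^{2,\beta}(\Omega)} \le C \, \| f \|_{C^\alpha(\Omega)}$, which simultaneously yields the required uniform bound for Schaefer and the quantitative estimate~\eqref{C2betaest}. Schaefer then delivers a fixed point of $T$, which is the desired solution.

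Uniqueness follows from the maximum principle of Proposition~\ref{prop:comparison} under condition~(iii)---which is available thanks to $\inf_\Omega p > 0$---applied to $\pm (u_1 - u_2)$ for any two solutions. The main obstacle in this plan is the Key Lemma: the boundary analysis for $L_k u$ must simultaneously exploit the interior $C^\alpha$ gain coming from the $C^{1,\alpha}$ regularity of $u$ inside $\Omega$ and the boundary $C^{1-2s}$ behavior induced by the merely Lipschitz regularity of $u$ across $\partial \Omega$, and reconcile them into a single $C^\beta$ bound on $\overline{\Omega}$; everything else is reasonably standard once this estimate is in place.
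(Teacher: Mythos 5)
Your proposal is correct and follows essentially the same route as the paper: Leray--Schauder applied to the affine map obtained by inverting the local part with the nonlocal term on the right-hand side, uniqueness and the a priori bound via Proposition~\ref{prop:comparison} and Theorem~\ref{lineargrowththm}, and an interpolation--absorption step to close the estimate. The one substantive comment concerns your Key Lemma, which you single out as the main obstacle: it is easier than you suggest. The paper's Lemma~\ref{fraclapofus<12} proves the stronger bound $\| L_k u \|_{C^{1-2s}(\Omega)} \le C \| Du \|_{L^\infty(\Omega)}$ using only the elementary inequality $|u(x)-u(x+z)-u(y)+u(y+z)| \le 2\|Du\|_{L^\infty(\R^n)}\min\{|z|,|x-y|\}$ together with $\int_{B_r}|z|^{1-n-2s}\,dz + r\int_{\R^n\setminus B_r}|z|^{-n-2s}\,dz \le C r^{1-2s}$: since $2s<1$, the \emph{first-order} difference is already integrable against the kernel, so no Taylor expansion, no symmetry of $k$, and no interior $C^{1,\beta}$ information are needed, and the ``reconciliation'' of interior and boundary behavior that you flag as the main difficulty is a non-issue. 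Because $\beta \le 1-2s$, this immediately yields your claimed $C^\beta$ bound with the weaker right-hand side $\|Du\|_{L^\infty(\Omega)}$, and the remaining differences from the paper (working in $C^{1,\beta}$ with compactness of $C^{2,\beta}\hookrightarrow C^{1,\beta}$ rather than the paper's $C^2(\overline\Omega)$, and keeping $g\cdot Du$ on the left instead of dividing through by $p$ and inverting the pure Laplacian) are immaterial.
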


We can address the proof of Proposition~\ref{mainlinears<12prop} through unweighted H\"older spaces, since, when~$s < \frac{1}{2}$, the operator~$L_k$ maps Lipschitz functions to H\"older continuous ones. The following lemma provides a statement of this fact in full details.

\begin{lemma} \label{fraclapofus<12}
Let~$\Omega \subset \R^n$ be a bounded open set with Lipschitz boundary and~$k$ be a kernel satisfying~\eqref{k}, for some~$s \in \left( 0, \frac{1}{2} \right)$ and~$\kappa_2 \ge \kappa_1 > 0$. Let~$u \in C^{0, 1}(\overline\Omega) \cap C^0(\R^n)$ with~$u = 0$ in~$\R^n \setminus \Omega$. Then,~$L_k u \in C^{1 - 2 s}(\overline\Omega)$, with
\[
\| L_k u \|_{C^{1 - 2 s}(\overline\Omega)} \le C \| Du \|_{L^\infty(\overline\Omega)},
\]
for some constant~$C > 0$ depending only on~$n$,~$s$,~$\kappa_2$, and~$\textnormal{diam}(\Omega)$.
\end{lemma}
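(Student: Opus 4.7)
The plan is to estimate separately the uniform norm and the $C^{1-2s}$-seminorm of $L_k u$. The quantitative feature that makes the regime $s\in(0,\frac12)$ manageable is that the Lipschitz bound $|u(x)-u(x+z)|\le L|z|$ transforms the kernel singularity into $|z|^{1-n-2s}$, which is locally integrable near the origin precisely because $2s<1$; in particular no principal value is needed. As a preliminary, I would argue that $u$ extends to a globally Lipschitz function on $\R^n$ with constant $L\le C_\Omega\|Du\|_{L^\infty(\Omega)}$: this follows from the Lipschitz character of $\partial\Omega$ together with the fact that $u=0$ on $\partial\Omega$ and on $\R^n\setminus\Omega$, and it is the one point where the boundary smoothness actually enters.

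For the $L^\infty$ bound, I would fix $x\in\Omega$, set $\rho=\mathrm{diam}(\Omega)$, and split the integral defining $L_k u(x)$ at $|z|=\rho$. Over $B_\rho$, the Lipschitz estimate together with \eqref{k} yields a contribution of order
\[
\kappa_2 L\int_{B_\rho}|z|^{1-n-2s}\,dz\;\le\;C\,\kappa_2 L\,\rho^{1-2s}.
\]
Over $\R^n\setminus B_\rho$, the trivial bound $|u(x)-u(x+z)|\le 2\|u\|_{L^\infty(\R^n)}\le 2L\,\mathrm{diam}(\Omega)$ combined with $\int_{\R^n\setminus B_\rho}|z|^{-n-2s}\,dz=C\rho^{-2s}$ gives another contribution of the same order. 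Hence $\|L_k u\|_{L^\infty(\Omega)}\le C\|Du\|_{L^\infty(\Omega)}$ with the stated dependence.

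For the Hölder seminorm, take $x,y\in\Omega$, set $h=|x-y|$, and write
\[
L_k u(x)-L_k u(y) = \int_{\R^n}\Big\{[u(x)-u(x+z)]-[u(y)-u(y+z)]\Big\}\,k(z)\,dz.
\]
I would split at $|z|=2h$. On $B_{2h}$, the integrand is estimated pointwise by $|u(x)-u(x+z)|+|u(y)-u(y+z)|\le 2L|z|$, and the kernel gain yields
\[
\int_{B_{2h}}2L|z|\cdot\kappa_2|z|^{-n-2s}\,dz\;\le\;C\,\kappa_2 L\,h^{1-2s}.
\]
On $\R^n\setminus B_{2h}$, regroup the integrand as $[u(x)-u(y)]-[u(x+z)-u(y+z)]$, each piece of size at most $Lh$, and integrate $|z|^{-n-2s}$ over the complement of $B_{2h}$ to obtain $C\,\kappa_2 L\,h\cdot h^{-2s}=C\,\kappa_2 L\,h^{1-2s}$. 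Summing yields $|L_ku(x)-L_ku(y)|\le C|x-y|^{1-2s}$ with the claimed constant.

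The only genuine obstacle is the preliminary global Lipschitz extension: without it, segments $[x,x+z]$ that exit and re-enter $\Omega$ would a priori allow uncontrolled oscillations of $u$. Once that step is in place, the remaining computation is a clean scaling argument in which the exponent $1-2s$ emerges as the leftover after the Lipschitz factor $|z|$ (respectively $h$) absorbs part of the kernel's singularity.
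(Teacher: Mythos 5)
Your proposal is correct and follows essentially the same route as the paper's proof: both rest on the observation that $u$ is globally Lipschitz with $\| Du \|_{L^\infty(\R^n)} = \| Du \|_{L^\infty(\Omega)}$, on the pointwise bound $|u(x) - u(x+z) - u(y) + u(y+z)| \le 2 \| Du \|_{L^\infty} \min\{|x-y|, |z|\}$, and on splitting the integral at $|z| \sim |x - y|$ for the seminorm (resp.\ at the scale of $\diam(\Omega)$ for the sup bound), with the exponent $1 - 2s$ appearing exactly as you describe. The only caveat is that your extension constant $C_\Omega$ must not depend on the Lipschitz character of $\partial\Omega$, since the statement allows dependence on $\Omega$ only through $\diam(\Omega)$; this is harmless, because $|u(x)| \le \| Du \|_{L^\infty(\Omega)}\, d_x$ together with $u \equiv 0$ outside $\Omega$ gives a universal extension constant (at most $2$) whenever a segment exits $\Omega$.
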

\begin{proof}
Set~$R \coloneqq \diam(\Omega)$. Using that~$u$ is globally Lipschitz in~$\R^n$ with~$\| Du \|_{L^\infty(\R^n)} = \| Du \|_{L^\infty(\Omega)}$, that~$|u(x)| \le \| Du \|_{L^\infty(\Omega)} \, d_x$ for~$x \in \Omega$, and that~$k$ satisfies~\eqref{k}, we compute
\begin{align*}
\big| L_k u(x) \big| & \le \int_{\Omega} |u(x) - u(w)| \, k(w - x) \, dw + |u(x)| \int_{\R^n \setminus \Omega} k(w - x) \, dw \\
& \le \kappa_2 \| D u \|_{L^\infty(\R^n)}
\int_{B_R} \frac{dz}{|z|^{n - 1 + 2 s}} 
+ \kappa_2 \| Du \|_{L^\infty(\Omega)} \, d_x \int_{\R^n \setminus B_{d_x}} \frac{dz}{|z|^{n + 2 s}} \\
& \le \mathcal{H}^{n - 1}(\partial B_1) \, \kappa_2 \| Du \|_{L^\infty(\Omega)}
\left( \frac{R^{1 - 2 s}}{1 - 2 s} + \frac{d_x^{1 - 2 s}}{2 s} \right) \\
& \le \frac{\mathcal{H}^{n - 1}(\partial B_1)}{2 s (1 - 2 s)}  \, \kappa_2 R^{1 - 2 s} \| Du \|_{L^\infty(\Omega)},
\end{align*}
for every~$x \in \Omega$. This gives the boundedness in~$\Omega$ of~$L_k u$. We now address its H\"older continuity. To do it, we first observe that, given~$x, y \in \Omega$, it holds
$$
\big| u(x) - u(x + z) - u(y) + u(y + z) \big| \le 2 \, \| Du \|_{L^\infty(\R^n)} \min \big\{ {|x - y|, |z|} \big\} \quad \mbox{for every } z \in \R^n.
$$
Hence, we have
\begin{align*}
\big| L_k u(x) - L_k u(y) \big| & \le \int_{\R^n} \big| {u(x) - u(x + z) - u(y) + u(y + z)} \big| \, k(z) \, dz \\
& \le 2 \kappa_2 \| Du \|_{L^\infty(\R^n)}
\left( \int_{B_{|x - y|}} \frac{dz}{|z|^{n - 1 + 2 s}} + |x - y| \int_{\R^n \setminus B_{|x - y|}} \frac{dz}{|z|^{n + 2 s}} \right) \\
& \le \frac{\mathcal{H}^{n - 1}(\partial B_1)}{s(1 - 2 s)} \, \kappa_2 \| Du \|_{L^\infty(\Omega)}
\,|x - y|^{1 - 2 s},
\end{align*}
and the proof is complete.
\end{proof}

Thanks to this result, we may directly address Proposition~\ref{mainlinears<12prop}.

\begin{proof}[Proof of Proposition~\ref{mainlinears<12prop}]
Of course, the uniqueness claim directly follows from the maximum principle of Proposition~\ref{prop:comparison}. As to the existence, we run a fixed-point argument in the Banach space
\[
X \coloneqq \Big\{ {u \in C^2(\overline{\Omega})} \cap C^0(\R^n) : u = 0 \mbox{ in } \R^n \setminus \Omega \Big\},
\]
endowed with the norm~$\| u \|_X \coloneqq \| u \|_{C^2(\overline\Omega)}$.

Let~${(-\Delta|_\Omega)}^{-1}$ be the inverse operator of the Laplacian coupled with homogeneous boundary conditions on~$\Omega$. That is, given~$h \in C^\beta(\overline\Omega)$ with~$\beta \coloneqq \min \{ 1 - 2 s, \alpha \} \in (0, 1)$, we indicate by~${(-\Delta|_\Omega)}^{-1} [h]$ the unique solution~$u \in C^{2, \beta}(\overline\Omega)$ of the Dirichlet problem
\[
\left\lbrace\begin{aligned}
- \Delta u &= h && \quad \text{in } \Omega, \\
u &= 0 && \quad \text{on } \partial \Omega.
\end{aligned}\right.
\]
Consider the affine map~$\widetilde{T}$ defined as
\begin{equation} \label{tildeTdef}
\widetilde{T}[u]\coloneqq {(-\lapl|_\Omega)}^{-1}\bigg[\frac{f-g\cdot Du - q\,L_k u}{p}\bigg] 
\qquad \mbox{for } u \in X.
\end{equation}
By Lemma~\ref{fraclapofus<12} and the standard Schauder theory for the Laplacian, we have
\begin{equation} \label{Ttildeest}
\begin{aligned}
\big\| {\widetilde{T}[u]} \big\|_{C^{2, \beta}(\overline\Omega)} & \le C \left\| \frac{f-g\cdot Du-q\,L_k u}{p} \right\|_{C^{\beta}(\overline\Omega)} \\
& \le C \left\| \frac{1}{p} \right\|_{C^\alpha(\overline\Omega)} \Big( {\| f \|_{C^\alpha(\overline\Omega)} 
+ \| g \|_{C^\alpha(\overline\Omega)} \| Du \|_{C^\alpha(\overline\Omega)}
+ \| q \|_{C^\alpha(\overline\Omega)} \big\| L_k u \big\|_{C^{1 - 2 s}(\overline\Omega)}} \Big) \\
& \le C \Big( {\| f \|_{C^\alpha(\overline\Omega)} + \| Du \|_{C^\alpha(\overline\Omega)}} \Big) \le C \Big( {\| f \|_{C^\alpha(\overline\Omega)} + \| u \|_X} \Big),
\end{aligned}
\end{equation}
where, from now on,~$C > 0$ denotes a general constant depending only on~$n$, $s$, $\alpha$, $\kappa_1$, $\kappa_2$, $\Omega$, $\| p \|_{C^\alpha(\overline\Omega)}$, $\inf_\Omega p$, $\| q \|_{C^\alpha(\overline\Omega)}$, and~$\| g \|_{C^\alpha(\overline\Omega)}$. Hence,~$\widetilde{T}$ is a continuous operator from~$X$ to the subspace~$C^{2, \beta}_0(\overline\Omega)$ of~$C^{2, \beta}(\overline\Omega)$ made up of those functions that have vanishing limits on~$\partial \Omega$.

Let now~$\iota: C^{2, \beta}_0(\overline\Omega) \to C^2_0(\overline{\Omega}) \coloneqq \big\{ {u \in C^2(\overline{\Omega}) : u = 0 \mbox{ on } \partial \Omega} \big\}$ be the inclusion map and~$e_\Omega$ be the trivial extension operator outside of~$\Omega$, \textit{i.e.},
\begin{equation} \label{extOmegadef}
e_\Omega w = \begin{cases}
w & \quad \mbox{in } \Omega, \\
0 & \quad \mbox{in } \R^n \setminus \Omega,
\end{cases}
\quad \mbox{for every measurable function~} w: \Omega \to \R.
\end{equation}
Let~$T: X \to X$ be defined by~$T \coloneqq e_\Omega \big( {\iota \circ \widetilde{T}} \big)$. Clearly, the solvability of problem~\eqref{dir} is equivalent to the existence of a fixed point for~$T$.

Thanks to~\eqref{Ttildeest}, the map~$T$ is continuous and satisfies
$$
\big\| {T[u]} \big\|_X \le C \Big( {\| f \|_{C^\alpha(\overline\Omega)} + \| u \|_X} \Big) \quad \mbox{for every } u \in X.
$$
Moreover, since~$\iota$ is compact,~$T$ is compact as well. Hence, we can show the existence of a fixed-point for~$T$ via the Leray-Schauder Theorem (see~\cite{GT01}*{Theorem 11.3}), provided we check that
\begin{align}\label{8492734}
\|v\|_X \le C \| f \|_{C^\alpha(\overline\Omega)} \text{ for every~$v\in X$ such that~$v = \lambda \, T[v]$ for some~$\lambda\in[0,1]$}.
\end{align}

To see this, we first remark that every such~$v$ is a~$C^2(\overline\Omega) \cap C^0(\R^n)$-solution of the Dirichlet problem
\begin{equation} \label{lambdavprob}
\left\lbrace\begin{aligned}
p \, (-\Delta) v + \lambda q \, L_k v + \lambda g \cdot Dv &= \lambda f && \mbox{in } \Omega, \\
v &= 0 && \mbox{on } \partial \Omega, \\
v &= 0 && \mbox{in } \R^n \setminus \overline{\Omega}.
\end{aligned}\right.
\end{equation}
Hence, by applying Theorem~\ref{lineargrowththm} to both~$v$ and~$-v$, we deduce that
\[
\| v \|_{L^\infty(\Omega)} \le \diam(\Omega) \, \| v \|_{C^0_{-1}(\Omega)} \le C \| f \|_{L^\infty(\Omega)}.
\]
Moreover, by~\eqref{Ttildeest},
\[
\| v \|_{C^{2, \beta}(\overline\Omega)} = \lambda \, \big\| {\widetilde{T}[v]} \big\|_{C^{2, \beta}(\overline\Omega)} \le C \Big( { \| f \|_{C^{\alpha}(\overline\Omega)} + \| v \|_{C^2(\overline\Omega)}} \Big).
\]
By combining the last two estimates with the interpolation inequality~$\| w \|_{C^2(\overline\Omega)} \le C \| w \|_{L^\infty(\Omega)}^{\frac{\beta}{2 + \beta}} \| w \|_{C^{2, \beta}(\overline\Omega)}^{\frac{2}{2 + \beta}}$---see, \textit{e.g.},~\cite{L95}*{Definition~1.1.1 and Proposition~1.1.3}---, we obtain that
\[
\| v \|_{C^{2, \beta}(\overline\Omega)} \le C_0 \left( {\| f \|_{C^\alpha(\overline\Omega)} + \| f \|_{L^\infty(\Omega)}^{\frac{\beta}{2 + \beta}} \| v \|_{C^{2, \beta}(\overline\Omega)}^{\frac{2}{2 + \beta}}} \right),
\]
for some constant~$C_0 > 0$ depending on the same quantities as~$C$. Claim~\eqref{8492734} follows at once, thanks to the weighted Young's inequality 
\[
\| f \|_{L^\infty(\Omega)}^\frac{\beta}{2 + \beta} \| v \|_{C^{2, \beta}(\overline\Omega)}^{\frac{2}{2 + \beta}} \leq C \| f \|_{L^\infty(\Omega)} + \frac1{2C_0} \| v \|_{C^{2, \beta}(\overline\Omega)}
\]
and the fact that~$\| v \|_X \le \| v \|_{C^{2, \beta}(\overline\Omega)}$.

We thus proved that there exists a unique~$C^2(\overline\Omega) \cap C^0(\R^n)$-solution to~\eqref{dir}. Since the~$C^{2, \beta}$ estimate~\eqref{C2betaest} also immediately follows from the previous calculations, the proof is complete.
\end{proof}

\subsection{The case~\texorpdfstring{$s \in \left[ \frac{1}{2}, 1 \right)$}{1/2<s<1}.}

We establish in this subsection the following proposition, which gives Theorem~\ref{mainlinearexthmm} when~$s \ge \frac{1}{2}$.

\begin{proposition} \label{mainlinears>12prop}
Let~$\Omega \subset \R^n$ be a bounded open set with boundary of class~$C^{2, \alpha}$, for some~$\alpha \in (0, 1)$. Let~$k$ be a kernel satisfying~\eqref{k}, for some~$s \in \left[ \frac{1}{2}, 1 \right)$ and~$\kappa_2 \ge \kappa_1 > 0$. Let~$p, q \in C^\alpha(\overline\Omega)$ be two non-negative functions, with~$p$ satisfying~$\inf_\Omega p > 0$. Let~$f,g \in C^\alpha(\overline\Omega)$.

Then, problem~\eqref{dir} has a unique solution~$u \in C^2(\Omega) \cap C^0(\R^n)$. Moreover,~$u$ has the following regularity properties:
\begin{itemize}[leftmargin=*]
\item If~$s \in \left( \frac{1}{2}, 1 \right)$, then~$u \in C^{2, \beta}_{-1,2s - 1}(\Omega) \subset C^{1, 2 - 2 s}(\overline\Omega)$, with~$\beta \coloneqq \min \big\{ {2 - 2 s, \alpha} \big\}$, and it satisfies
\[
\| u \|_{C^{2, \beta}_{-1,2s - 1}(\Omega)} + \| u \|_{C^{1, 2 - 2 s}(\overline\Omega)} \le C \| f \|_{C^\alpha(\overline\Omega)},
\]
for some~$C > 0$ depending only on~$n$,~$s$,~$\alpha$,~$\kappa_1$,~$\kappa_2$,~$\Omega$,~$\| p \|_{C^\alpha(\overline\Omega)}$,~$\inf_\Omega p$,~$\| q \|_{C^\alpha(\overline\Omega)}$, and~$\| g \|_{C^\alpha(\overline\Omega)}$.
\item If~$s = \frac{1}{2}$, then~$u \in C^{2, \alpha}_{-1,\varepsilon}(\Omega) \subset C^{1, 1 - \varepsilon}(\overline\Omega)$ for every~$\varepsilon \in (0, 1)$ and it satisfies
\[
\| u \|_{C^{2, \alpha}_{-1,\varepsilon}(\Omega)} + \| u \|_{C^{1, 1- \varepsilon}(\overline\Omega)} \le C_\varepsilon \| f \|_{C^\alpha(\overline\Omega)},
\]
for some~$C_\varepsilon > 0$ depending only on~$n$,~$s$,~$\alpha$,~$\kappa_1$,~$\kappa_2$,~$\Omega$,~$\| p \|_{C^\alpha(\overline\Omega)}$,~$\inf_\Omega p$,~$\| q \|_{C^\alpha(\overline\Omega)}$,~$\| g \|_{C^\alpha(\overline\Omega)}$, and~$\varepsilon$.
\end{itemize}
\end{proposition}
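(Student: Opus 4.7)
The plan is to mirror the fixed-point argument of Proposition~\ref{mainlinears<12prop}, but to work in the weighted H\"older spaces of Section~\ref{sec:wei-holder} and leverage the Poisson equation regularity of Theorem~\ref{mainC2alphaexistprop}, so as to accommodate the fact that solutions are no longer $C^2$ up to the boundary when $s \ge 1/2$. Throughout, I would set $\gamma \coloneqq 2s - 1$ and $\beta \coloneqq \min\{2 - 2s, \alpha\}$ when $s \in (1/2, 1)$, and $\gamma \coloneqq \varepsilon$ (for any prescribed small $\varepsilon$) and $\beta \coloneqq \alpha$ when $s = 1/2$. With these choices, Lemma~\ref{C2alphaisunweightedlem} will give the continuous embedding $C^{2, \beta}_{\gamma, \star}(\Omega) \subset C^{1, 1 - \gamma}(\Omega) = C^{1, 2-2s}(\Omega)$ (resp.\ $C^{1, 1-\varepsilon}(\Omega)$) claimed in the conclusion.

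The critical new ingredient, playing the role of Lemma~\ref{fraclapofus<12}, will be the estimate
\begin{equation} \label{Lkestplan}
\| L_k u \|_{C^\beta_\gamma(\Omega)} \le C \| u \|_{C^{2, \beta}_{\gamma, \star}(\Omega)}
\end{equation}
for any $u \in C^{2, \beta}_{\gamma, \star}(\Omega) \cap C^0(\R^n)$ vanishing outside $\Omega$. To establish it, I would first observe that by Lemma~\ref{C2alphaisunweightedlem} such a $u$ is globally Lipschitz, so the symmetry of $k$ allows one to rewrite
\[
L_k u(x) = \frac{1}{2} \int_{\R^n} \Big( {2u(x) - u(x+z) - u(x-z)} \Big) k(z) \, dz
\]
as an absolutely convergent integral. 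The weighted pointwise bound $d_x^\gamma |L_k u(x)| \le C$ would then follow by splitting the integration into the inner region $|z| < d_x/2$---where the second-order Taylor expansion combined with the weighted control $|D^2 u(y)| \lesssim d_y^{-\gamma}$ on $B_{d_x/2}(x)$ yields an acceptable contribution---and the complementary region $|z| \ge d_x/2$---where the global Lipschitz bound gives the dominant behavior of order $d_x^{1-2s}$ (resp.\ $\log(1/d_x)$ when $s = 1/2$). The weighted H\"older seminorm bound in \eqref{Lkestplan} would be obtained through an analogous but more delicate region-by-region decomposition of $L_k u(x) - L_k u(y)$, in the spirit of the proof of Lemma~\ref{Newtonpotlem}; this is the principal technical obstacle.

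Once \eqref{Lkestplan} is in place, I would define the affine map
\[
\widetilde T[u] \coloneqq (-\Delta|_\Omega)^{-1}\!\left[ \frac{f - g \cdot Du - q \, L_k u}{p} \right]
\]
on the Banach space
\[
X \coloneqq \Big\{ u \in C^2(\Omega) \cap C^0(\R^n) : u = 0 \mbox{ in } \R^n \setminus \Omega, \ \| u \|_{C^{2, \beta'}_{\gamma, \star}(\Omega)} < +\infty \Big\}
\]
for some fixed $\beta' \in (0, \beta)$, normed by $\| \cdot \|_{C^{2, \beta'}_{\gamma, \star}(\Omega)}$. The hypotheses on $f, g, p, q$ together with \eqref{Lkestplan} and Lemma~\ref{C2alphaisunweightedlem} will ensure that the bracket $(f - g \cdot Du - q L_k u)/p$ belongs to $C^\beta_\gamma(\Omega)$, so by Theorem~\ref{mainC2alphaexistprop}, $\widetilde T[u] \in C^{2, \beta}_{\gamma, \star}(\Omega)$. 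Setting $T \coloneqq e_\Omega \circ \widetilde T$ (with $e_\Omega$ as in~\eqref{extOmegadef}) produces a self-map of $X$ that is compact, since $C^{2, \beta}_{\gamma, \star}(\Omega)$ embeds compactly into $C^{2, \beta'}_{\gamma, \star}(\Omega)$ via a routine Arzel\`a--Ascoli adaptation of Lemma~\ref{compactembedlem}.

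Uniqueness is a direct consequence of Proposition~\ref{prop:comparison}. For existence, I would apply the Leray--Schauder theorem, which demands an \textit{a priori} bound $\|v\|_X \le C \|f\|_{C^\alpha(\Omega)}$ for every $v \in X$ satisfying $v = \lambda T[v]$ for some $\lambda \in [0, 1]$. Any such $v$ solves~\eqref{lambdavprob}, so Theorem~\ref{lineargrowththm} (applied to $\pm v$) yields $\|v\|_{L^\infty(\Omega)} \le C \|f\|_{L^\infty(\Omega)}$; combining this with Theorem~\ref{mainC2alphaexistprop} and \eqref{Lkestplan} provides a $C^{2, \beta}_{\gamma, \star}$ estimate involving lower-order terms on the right-hand side, which can then be absorbed via an interpolation argument along the lines of Lemma~\ref{interpollem}. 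This would close the estimate and produce the unique solution $u \in C^{2, \beta}_{\gamma, \star}(\Omega) \subset C^{1, 1-\gamma}(\Omega)$ satisfying the bounds stated in the proposition.
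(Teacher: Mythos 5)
Your overall architecture (fixed point via Leray--Schauder, weighted Schauder theory for the Poisson equation, maximum principle for uniqueness, interpolation for the \textit{a priori} bound) matches the paper's, but the functional-analytic setup contains a genuine gap that breaks the compactness of $T$. You claim that $C^{2,\beta}_{\gamma,\star}(\Omega)$ embeds compactly into $C^{2,\beta'}_{\gamma,\star}(\Omega)$ for $\beta'<\beta$ ``via a routine adaptation of Lemma~\ref{compactembedlem}.'' This is false: lowering only the H\"older exponent while keeping the \emph{same} weight $d^\gamma$ on the Hessian gives no compactness near $\partial\Omega$. Concretely, in one dimension take $u_k$ with $u_k''(x)=x^{-\gamma}\phi\big((x-2\delta_k)/\delta_k\big)$ for a fixed bump $\phi$ and $\delta_k\to 0$; one checks that $\|u_k\|_{C^{2,\beta}_{\gamma,\star}}$ stays bounded, $u_k\to 0$ in $C^1(\overline\Omega)\cap C^2_\loc(\Omega)$, yet $\sup_x x^\gamma|u_k''(x)|\ge\phi(0)>0$, so no subsequence converges in any norm containing $\sup_x d_x^\gamma|D^2\cdot|$ --- in particular not in $C^{2,\beta'}_{\gamma,\star}$. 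The paper's Lemma~\ref{compactembedlem} works precisely because the target space is $C^2_1(\Omega)$, whose Hessian weight $d_x$ is strictly weaker than $d_x^\gamma$; the factor $\delta_\varepsilon^{1-\gamma}$ in its proof is exactly the gain coming from this mismatch of weights. Accordingly, the paper takes $X$ to be normed by $\|\cdot\|_{C^2_1(\Omega)}$, and the map $\widetilde T$ gains regularity from $C^2_1(\Omega)$ to $C^{2,\beta}_{\gamma,\star}(\Omega)$.

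This choice of $X$ is tied to a second defect in your plan: your key estimate \eqref{Lkestplan} bounds $\|L_k u\|_{C^\beta_\gamma(\Omega)}$ by the full norm $\|u\|_{C^{2,\beta}_{\gamma,\star}(\Omega)}$, which is both more than you need and not enough for the closing step. In the Leray--Schauder \textit{a priori} estimate you would obtain $\|v\|_{C^{2,\beta}_{\gamma,\star}}\le C\big(\|f\|_{C^\alpha}+\|v\|_{C^{2,\beta}_{\gamma,\star}}\big)$ with a large constant $C$, which cannot be absorbed, and it is not a ``lower-order'' term that interpolation in the spirit of Lemma~\ref{interpollem} can handle. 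The correct statement --- and the one the paper proves in Lemmas~\ref{fraclapests>12lem} and~\ref{fraclapests=12lem} --- is $\|L_k u\|_{C^\beta_{2s-1}(\Omega)}\le C\|u\|_{C^2_1(\Omega)}$: the inner contribution $|z|<d_x/2$ only needs $|D^2u(y)|\lesssim d_y^{-1}$ (not $d_y^{-\gamma}$) to produce $d_x^{1-2s}$, and the outer contribution only needs the global Lipschitz bound. With the weaker norm on the right, $\|v\|_{C^2_1}$ can be interpolated between $\|v\|_{C^0_1}$ (controlled by Theorem~\ref{lineargrowththm}) and $\|v\|_{C^{2,\beta}_1}\le\|v\|_{C^{2,\beta}_{\gamma,\star}}$ via Lemma~\ref{interpollem}, and then absorbed by Young's inequality. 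Replacing your space $X$ and estimate \eqref{Lkestplan} by these would repair the argument and essentially reproduce the paper's proof.
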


In order to prove this result, we need to first investigate the behavior of~$L_k$ when applied to functions which qualitatively resemble the sought solution~$u$ of~\eqref{dir}. In light of the Hopf lemma established in~\cite{BM21} or~\cite{AC23}---at least when~$p, q \equiv 1$, $g\equiv 0$, and~$f \ge 0$---the graph of~$u$ presents corner points at the boundary of~$\Omega$. When~$s \ge \frac{1}{2}$,~$L_k u$ will then typically blow-up at those points, no matter how smooth the function is inside~$\Omega$. The next two lemmas quantify the blow-up rate in the two cases~$s \in \left( \frac{1}{2}, 1 \right)$ and~$s = \frac{1}{2}$.

\begin{lemma} \label{fraclapests>12lem}
Let~$\Omega \subset \R^n$ be a bounded open set with Lipschitz boundary and~$k$ be a kernel satisfying~\eqref{k}, for some~$s \in \left( \frac{1}{2}, 1 \right)$ and~$\kappa_2 \ge \kappa_1 > 0$. Let~$u \in C^2_{-1}(\Omega) \cap C^0(\R^n)$ with~$u = 0$ in~$\R^n \setminus \Omega$. Then,~$L_k u \in C^\beta_{2 s - 1}(\Omega)$ for every~$\beta \in (0, 2 - 2 s]$, with
\[
\big\| L_k u \big\|_{C^\beta_{2 s - 1}(\Omega)} \le C \| u \|_{C^2_{-1}(\Omega)},
\]
for some constant~$C > 0$ depending only on~$n$,~$s$,~$\kappa_2$, and~$\textnormal{diam}(\Omega)$.
\end{lemma}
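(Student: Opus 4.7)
The plan is to control separately the two terms defining $\|\cdot\|_{C^\beta_{2s-1}(\Omega)}$: the weighted supremum of $|L_k u|$ and the weighted Hölder seminorm. Throughout, I exploit two regularity facts about the trivial extension of $u$ by zero outside $\Omega$, namely the global Lipschitz estimate $\|u\|_{C^{0,1}(\R^n)}\le C\|u\|_{C^2_1(\Omega)}$ (since $u$ vanishes on $\partial\Omega$ and $\|Du\|_{L^\infty(\Omega)}\le\|u\|_{C^2_1(\Omega)}$) and the interior bound $|D^2 u(w)|\le d_w^{-1}\|u\|_{C^2_1(\Omega)}$ for $w\in\Omega$. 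The principal value is rewritten via the symmetry of $k$ as
\[
L_k u(w)=\tfrac{1}{2}\int_{\R^n}\bigl(2u(w)-u(w+z)-u(w-z)\bigr)k(z)\,dz,
\]
which is absolutely convergent.

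For the pointwise bound, I split the integral at $|z|=d_x/2$. On $|z|<d_x/2$ the second-order Taylor estimate $|2u(x)-u(x+z)-u(x-z)|\le 2d_x^{-1}\|u\|_{C^2_1(\Omega)}|z|^2$ yields a contribution of order $d_x^{1-2s}$; on $|z|\ge d_x/2$ the global Lipschitz bound for $d_x/2\le|z|\le 1$ combined with the uniform $L^\infty$ bound for $|z|>1$ produces the same order, so that $|L_k u(x)|\le C d_x^{1-2s}\|u\|_{C^2_1(\Omega)}$.

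For the Hölder seminorm at a fixed pair $x,y\in\Omega$, set $r\coloneqq d_{x,y}$ and assume without loss of generality $r=d_x\le d_y$. If $|x-y|\ge r/4$, the pointwise bound alone gives
\[
d_{x,y}^{\beta+2s-1}\,\frac{|L_k u(x)-L_k u(y)|}{|x-y|^\beta}\le C\,\frac{r^\beta}{|x-y|^\beta}\|u\|_{C^2_1(\Omega)}\le C\|u\|_{C^2_1(\Omega)}.
\]
If $|x-y|<r/4$, I apply the symmetrized identity to $L_k u(x)-L_k u(y)$ and split the $z$-integration at $|z|=r/4$. Let $\psi(w)\coloneqq 2u(w)-u(w+z)-u(w-z)$. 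For $|z|<r/4$, all points of the form $w\pm tz$ with $w$ on the segment $[y,x]$ and $t\in[0,1]$ lie at distance $\ge r/2$ from $\partial\Omega$, hence $|D^2 u|\le 2r^{-1}\|u\|_{C^2_1(\Omega)}$ on them. This furnishes two competing controls on $\psi(x)-\psi(y)$: a \emph{pointwise} one, $|\psi(x)-\psi(y)|\le Cr^{-1}\|u\|_{C^2_1(\Omega)}|z|^2$, coming from bounding each of $\psi(x),\psi(y)$ separately by second-order Taylor, and a \emph{gradient} one, $|\psi(x)-\psi(y)|\le Cr^{-1}\|u\|_{C^2_1(\Omega)}|x-y||z|$, coming from integrating $D\psi$ along $[y,x]$. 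Using the pointwise bound on $\{|z|<|x-y|\}$ and the gradient bound on $\{|x-y|\le|z|<r/4\}$, and integrating against $|z|^{-n-2s}$, yields a total near contribution of order $Cr^{-1}|x-y|^{2-2s}\|u\|_{C^2_1(\Omega)}$; since $|x-y|<r/4$, this is bounded by $C|x-y|^\beta r^{1-2s-\beta}\|u\|_{C^2_1(\Omega)}$ for every $\beta\le 2-2s$. The far contribution, $|z|\ge r/4$, is handled via the global Lipschitz bound, which gives $|\psi(x)-\psi(y)|\le C\|u\|_{C^2_1(\Omega)}|x-y|$ and a $z$-integral of order $r^{-2s}$, whose product again fits the target.

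The main technical difficulty lies in the near regime of the Hölder estimate: since $C^2_1$ provides no modulus of continuity for $D^2 u$, neither the pointwise nor the gradient bound on $\psi$ is, on its own, integrable against $|z|^{-n-2s}$ when $s>\tfrac{1}{2}$. The balancing across the two sub-regions $\{|z|<|x-y|\}$ and $\{|x-y|\le|z|<r/4\}$ is therefore essential, and its output saturates exactly at $\beta=2-2s$, matching the borderline in the statement.
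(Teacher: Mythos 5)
Your proposal is correct and follows essentially the same route as the paper's proof: the same reduction of the Hölder seminorm to the regime $|x-y|\lesssim d_x$ via the weighted $L^\infty$ bound, the same near/far splitting of the $z$-integral at a multiple of $d_x$, and the same balancing of the second-order control $|z|^2$ against the incremental control $|x-y|\,|z|$ across the threshold $|z|=|x-y|$ (the paper packages the two competing bounds into a single $\min\{|z|,|x-y|\}$ factor). The only cosmetic difference is that you use the symmetrized second-difference form of $L_k$ where the paper keeps the principal value and subtracts the gradient term, which cancels by the symmetry of $k$; the estimates are identical in substance.
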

\begin{proof}
We begin with the weighted~$L^\infty$ estimate for~$L_k u$, \textit{i.e.}, we claim that
\begin{equation} \label{weightedLinftyfraclap}
\big| L_k u(x) \big| \le C \| u \|_{C^2_{-1}(\Omega)} \, d_x^{1 - 2 s} \qquad \mbox{for all } x \in \Omega,
\end{equation}
for some constant~$C > 0$ depending only on~$n$,~$s$, and~$\kappa_2$. For~$x \in \Omega$, we write~$L_k u(x) = I_\rho(x) + O_\rho(x)$, with
\begin{align*}
I_\rho(x) & \coloneqq \pv \int_{B_\rho} \big( {u(x) - u(x + z)} \big) k(z) \, dz, \\
O_\rho(x) & \coloneqq \int_{\R^n \setminus B_\rho} \big( {u(x) - u(x + z)} \big) k(z) \, dz,
\end{align*}
and~$\rho > 0$. Recalling assumption~\eqref{k} on~$k$ and choosing~$\rho \coloneqq d_x / 2$, we compute
\begin{align*}
\left| I_\rho(x) \right| & = \left| \int_{B_\rho} \big( {u(x + z) - u(x) - D u(x) \cdot z} \big) k(z) \, dz \right| \le \kappa_2 \int_{B_\rho} \frac{\big| {u(x + z) - u(x) - D u(x) \cdot z} \big|}{|z|^{n + 2 s}} \, dz \\
& \le \kappa_2 \, \rho^{- 1} \sup_{y \in \Omega} \Big( {d_y |D^2 u(y)|} \Big)  \int_{B_\rho} \frac{dz}{|z|^{n - 2 + 2 s}} \le \frac{\mathcal{H}^{n - 1}(\partial B_1)}{2(1 - s)} \, \kappa_2 \, \| u \|_{C^2_{-1}(\Omega)} \, \rho^{1 - 2 s}.
\end{align*}
On the other hand, exploiting the fact that~$u$ is globally Lipschitz continuous with~$\| Du \|_{L^\infty(\R^n)} = \| Du \|_{L^\infty(\Omega)}$, we simply have
\begin{align*}
\left| O_\rho(x) \right| & \le \kappa_2 \int_{\R^n \setminus B_\rho} \frac{|u(x) - u(x+z)|}{|z|^{n + 2 s}} \; dz \\
& \le \kappa_2 \| Du \|_{L^\infty(\R^n)} \int_{\R^n \setminus B_\rho} \frac{dz}{|z|^{n - 1 + 2 s}} 
\le \frac{\mathcal{H}^{n - 1}(\partial B_1)}{2 s - 1} \, \kappa_2
\, \| u \|_{C^2_{-1}(\Omega)} \, \rho^{1 - 2 s},
\end{align*}
and claim~\eqref{weightedLinftyfraclap} follows.

We now move to the weighted H\"older continuity estimate. Thanks to~\eqref{weightedLinftyfraclap} and symmetry considerations, it is clear that we may restrict to proving that
\begin{equation} \label{weightedHolfraclap}
\big| {L_k u(x) - L_k u(y)} \big| \le C \| u \|_{C^2_{-1}(\Omega)} \frac{|x - y|^\beta}{d_x^{2 s - 1 + \beta}} \quad \mbox{for all } x, y \in \Omega \mbox{ s.t.~} d_x \le d_y \mbox{ and } |x - y| \le \frac{d_x}{4 },
\end{equation}
for some constant~$C > 0$ depending only on~$n$,~$s$, and~$\kappa_2$. To see this, we let as before~$\rho \coloneqq d_x / 2$ and observe that
\[
\big| {u(x) - u(x + z) - u(y) + u(x + z)} \big| \le 2 \| Du \|_{L^\infty(\Omega)} \min \big\{ {|z|, |x - y|} \big\} \quad \mbox{for all } z \in \R^n.
\]
Using this, we compute
\begin{equation} \label{OHolderest}
\begin{aligned}
\left| O_\rho(x) - O_\rho(y) \right| & \le \kappa_2
\int_{\R^n \setminus B_\rho} \frac{\big| {u(x) - u(x + z) - u(y) + u(x + z)} \big|}{|z|^{n + 2 s}} \, dz \\
& \le 2 \kappa_2 \, \| Du \|_{L^\infty(\Omega)} |x - y| \int_{\R^n \setminus B_\rho} \frac{dz}{|z|^{n + 2 s}} \\
& \le \frac{\mathcal{H}^{n - 1}(\partial B_1)}{s} \, \kappa_2 \, \| u \|_{C^2_{-1}(\Omega)} \, \frac{|x - y|}{\rho^{2 s}}.
\end{aligned}
\end{equation}
To estimate the difference between the~$I_\rho$ terms, we use the second order estimate
\[
\big| u(x + z) - u(x) - Du(x) \cdot z - u(y + z) + u(y) + Du(y) \cdot z \big| \le 2 \| u \|_{C^2_{-1}(\Omega)} \, \rho^{-1} |z| \min \big\{ {|z|, |x - y|} \big\},
\]
valid for all~$z \in B_\rho$. Thanks to this, we get
\begin{align*}
\left| I_\rho(x) - I_\rho(y) \right| & \le \kappa_2 \int_{B_\rho} \frac{\big| u(x + z) - u(x) - Du(x) \cdot z - u(y + z) + u(y) + Du(y) \cdot z \big|}{|z|^{n + 2 s}} \, dz \\
& \le 2 \kappa_2 \, \| u \|_{C^2_{-1}(\Omega)} \rho^{-1} \left( \int_{B_{|x - y|}} \frac{dz}{|z|^{n - 2 + 2 s}} + |x - y| \int_{B_\rho \setminus B_{|x - y|}} \frac{dz}{|z|^{n - 1 + 2 s}} \right) \\
& \le \frac{\mathcal{H}^{n - 1}(\partial B_1)}{(1 - s) (2 s - 1)} \, \kappa_2 \,
\| u \|_{C^2_{-1}(\Omega)} \, \rho^{-1}  |x - y|^{2 - 2 s}.
\end{align*}
This and~\eqref{OHolderest} immediately lead to~\eqref{weightedHolfraclap}. The proof of the lemma is thus complete.
\end{proof}

When~$s = \frac{1}{2}$,~$L_k u$ could develop logarithmic singularities at the boundary. With no aim to describe this behavior with such precision, we state the following result, which can be proved with minor modifications to the computations presented for Lemma~\ref{fraclapests>12lem}.

\begin{lemma} \label{fraclapests=12lem}
Let~$\Omega \subset \R^n$ be a bounded open set with Lipschitz boundary and~$k$ be a kernel satisfying~\eqref{k}, with~$s = \frac{1}{2}$ and for some~$\kappa_2 \ge \kappa_1 > 0$. Let~$u \in C^2_{-1}(\Omega) \cap C^0(\R^n)$ with~$u = 0$ in~$\R^n \setminus \Omega$. Then,~$L_k u \in C^\beta_{\varepsilon}(\Omega)$ for every~$\beta, \varepsilon \in (0, 1)$, with
\[
\big\| {L_k u} \big\|_{C^\beta_{\varepsilon}(\Omega)} \le C_\varepsilon \| u \|_{C^2_{-1}(\Omega)},
\]
for some constant~$C_\varepsilon > 0$ depending only on~$n$,~$\kappa_2$,~$\beta$,~$\textnormal{diam}(\Omega)$, and~$\varepsilon$.
\end{lemma}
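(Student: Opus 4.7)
The plan is to mirror the proof of Lemma~\ref{fraclapests>12lem} verbatim, with the only new twists coming from the borderline nature of the integrals at $s = \frac{1}{2}$, which now diverge logarithmically rather than converge or diverge algebraically. As before, for $x \in \Omega$ I will decompose
\[
L_k u(x) = I_\rho(x) + O_\rho(x),
\]
with $\rho \coloneqq d_x / 2$, where $I_\rho$ is the principal value piece over $B_\rho$ and $O_\rho$ is the tail over $\R^n \setminus B_\rho$.

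For the weighted $L^\infty$ bound, the treatment of $I_\rho$ is identical to the previous lemma: a second-order Taylor expansion together with the bound on $|D^2 u|$ inside $B_\rho(x) \subset \Omega$ reduces it to the convergent integral $\int_{B_\rho} |z|^{-(n-1)} \, dz \sim \rho$, yielding $|I_\rho(x)| \le C \|u\|_{C^2_1(\Omega)}$. For $O_\rho$, the bound $|u(x) - u(x+z)| \le \|Du\|_{L^\infty(\R^n)} |z|$ leads to $\int_{\R^n \setminus B_\rho} |z|^{-n} \, dz$, which is only logarithmically divergent at infinity; exploiting that $u$ vanishes outside the bounded set $\Omega$, this integral becomes effectively $\int_{B_R \setminus B_\rho} |z|^{-n} \, dz \sim \log(R/\rho)$ for $R \coloneqq 2 \diam(\Omega)$, plus a harmless finite tail. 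The logarithm is then absorbed into the weight via the elementary inequality $\log(R/d_x) \le C_\varepsilon d_x^{-\varepsilon}$, valid for any $\varepsilon \in (0,1)$, to conclude
\[
|L_k u(x)| \le C_\varepsilon \|u\|_{C^2_1(\Omega)} \, d_x^{-\varepsilon} \qquad \mbox{for every } x \in \Omega.
\]

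For the weighted H\"older seminorm, by symmetry and the $L^\infty$ bound just obtained, I may restrict to points $x, y \in \Omega$ with $d_x \le d_y$ and $|x-y| \le d_x / 4$. The outer term $|O_\rho(x) - O_\rho(y)|$ now presents no logarithmic issue: the bound $|u(x) - u(x+z) - u(y) + u(y+z)| \le 2 \|Du\|_{L^\infty} |x-y|$ leads to the convergent integral $|x-y| \int_{\R^n \setminus B_\rho} |z|^{-(n+1)} \, dz$ of order $|x-y|/\rho$. The inner term $|I_\rho(x) - I_\rho(y)|$ is the only source of a logarithm: using the same second-order estimate as in Lemma~\ref{fraclapests>12lem} and splitting at radius $|x-y|$, I obtain
\[
|I_\rho(x) - I_\rho(y)| \le C \|u\|_{C^2_1(\Omega)} \, \frac{|x-y|}{d_x} \, \log\frac{e \, d_x}{|x-y|}.
\]
I then dispose of the logarithm via the elementary inequality $t \log(e/t) \le C_\beta t^\beta$ valid for $t \in (0, 1/4]$, applied with $t = |x-y|/d_x$, which yields a bound by $C_\beta \|u\|_{C^2_1(\Omega)} (|x-y|/d_x)^\beta$. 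A final use of $d_x^{-\beta} \le \diam(\Omega)^\varepsilon \, d_x^{-(\beta+\varepsilon)}$ produces the required estimate in the $C^\beta_\varepsilon(\Omega)$ norm.

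There is no genuine obstacle: the proof is entirely parallel to that of Lemma~\ref{fraclapests>12lem}, and the only new ingredients are the two routine applications of $\log(1/t) \le C_\varepsilon t^{-\varepsilon}$ used to absorb the critical logarithmic divergences created by the kernel at the threshold exponent $s = \frac{1}{2}$.
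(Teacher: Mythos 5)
Your proof is correct and is precisely the ``minor modification'' of the proof of Lemma~\ref{fraclapests>12lem} that the paper itself invokes (the paper gives no further details for this lemma): the same decomposition $L_k u = I_\rho + O_\rho$ with $\rho = d_x/2$, with the two borderline logarithms --- $\log(R/d_x)$ in the weighted $L^\infty$ bound and $\tfrac{|x-y|}{d_x}\log\tfrac{e\,d_x}{|x-y|}$ in the H\"older increment of $I_\rho$ --- absorbed via $\log(1/t)\le C_\varepsilon t^{-\varepsilon}$ and $t\log(e/t)\le C_\beta t^\beta$. No gaps.
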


Next, we include the following result, which addresses the regularity of the gradient of functions belonging to the space~$C^2_{-1}(\Omega)$.

\begin{lemma} \label{lem:gradmapprop}
Let~$\Omega \subset \R^n$ be a bounded open set with Lipschitz boundary and~$u \in C^2_{-1}(\Omega)$. Then,~$D_j u \in C^\beta_\gamma(\Omega)$ for every~$j \in \{ 1, \ldots, n \}$ and~$\beta, \gamma \in (0, 1)$, with
$$
\big\| {D_j u} \big\|_{C^\beta_\gamma(\Omega)} \le C \| u \|_{C^2_{-1}(\Omega)},
$$
for some constant~$C > 0$ depending only on~$n$,~$\beta$,~$\gamma$, and~$\textnormal{diam}(\Omega)$.
\end{lemma}
\begin{proof}
On the one hand, we have that
\begin{equation} \label{partjuLinftyest}
\sup_{x \in \Omega} \Big( {d_x^\gamma \, |D_j u(x)|} \Big) \le \textnormal{\diam}(\Omega)^\gamma \| Du \|_{L^\infty(\Omega)} \le \textnormal{\diam}(\Omega)^\gamma \| u \|_{C^2_{-1}(\Omega)}.
\end{equation}
On the other hand, given any~$x \in \Omega$ and~$y \in B_{\frac{d_x}{2}}(x)$, by Lagrange's mean value theorem there exists a point~$z = z(x, y) \in B_{\frac{d_x}{2}} \! (x)$ such that
$$
\left| D_j u(x) - D_j u(y) \right| = \big| {D D_j u(z) \cdot (x - y)} \big| \le |x - y| \sup_{w \in B_{\frac{d_x}{2}} \! (x)} |D^2 u(w)| \le 2 \| u \|_{C^2_{-1}(\Omega)} d_x^{-1} |x - y|.
$$
Hence,
\begin{align*}
\sup_{\substack{x, y \in \Omega\\ 0 < |x - y| < \frac{d_x}{2}}} \!\! \left( d_{x, y}^{\beta + \gamma} \, \frac{|D_j u(x) - D_j u(y)|}{|x - y|^\beta} \right) & = \! \sup_{\substack{x, y \in \Omega\\ 0 < |x - y| < \frac{d_x}{2}}} \!\! \left( d_{x, y}^{\beta + \gamma} \left( \frac{|D_j u(x) - D_j u(y)|}{|x - y|}\right)^{\!\beta} |D_j u(x) - D_j u(y)|^{1 - \beta} \right) \\
& \le 2 \, \textnormal{\diam}(\Omega)^\gamma \| u \|_{C^2_{-1}(\Omega)}^\beta \| Du \|_{L^\infty(\Omega)}^{1 - \beta} \le 2 \, \textnormal{\diam}(\Omega)^\gamma \| u \|_{C^2_{-1}(\Omega)}.
\end{align*}
Since the supremum of the above quantity with respect to points~$x, y \in \Omega$ with~$|x - y| \ge \frac{d_x}{2}$ can be easily estimated using~\eqref{partjuLinftyest}, the proof of the lemma is complete.
\end{proof}

With these preparatory results in hand, we are ready to show the claims contained in Proposition~\ref{mainlinears>12prop}.

\begin{proof}[Proof of Proposition~\ref{mainlinears>12prop}]
The general strategy of the proof is the same adopted to establish Proposition~\ref{mainlinears<12prop}. The differences lie in the functional spaces that we employ and the estimates that we take advantage of.

First of all, we write
\[
\beta \coloneqq \min \Big\{ \alpha, 2 - 2 s \Big\} \quad \mbox{and} \quad \gamma \coloneqq \begin{dcases}
2 s - 1 & \quad \mbox{if } s \in \left( \frac{1}{2}, 1 \right) \!, \\
\varepsilon & \quad \mbox{if } s = \frac{1}{2},
\end{dcases}
\]
where, in the case~$s = \frac{1}{2}$,~$\varepsilon$ is any fixed number in~$(0, 1)$. In what follows,~$C$ indicates a general positive constant depending at most on~$n$,~$s$,~$\alpha$,~$\kappa_1$,~$\kappa_2$,~$\Omega$,~$\| p \|_{C^\alpha(\overline\Omega)}$,~$\inf_\Omega p$,~$\| q \|_{C^\alpha(\overline\Omega)}$,~$\| g \|_{C^\alpha(\overline\Omega)}$, and also on~$\varepsilon$ when~$s = \frac{1}{2}$.

As in the proof of Proposition~\ref{mainlinears<12prop}, we plan to obtain the existence of a solution of~\eqref{dir} through the Leray-Schauder fixed-point theorem applied to an affine endomorphism~$T$ in a Banach space~$X$. The domain of~$T$ is now
\[
X \coloneqq \Big\{ { u \in C^2_{-1}(\Omega) \cap C^0(\R^n) : u=0 \in \R^n\setminus\Omega} \Big\},
\]
with~$\|u\|_{X} \coloneqq \|u\|_{C^2_{-1}(\Omega)}$, while the map~$T$ is formally defined as before. That is,~$T \coloneqq e_\Omega \left( \iota \circ \widetilde{T} \right)$, where~$\widetilde{T}$ is exactly as in~\eqref{tildeTdef},~$e_\Omega$ is the extension operator~\eqref{extOmegadef}, and~$\iota$ is now the inclusion of~$C^{2, \beta}_{-1,\gamma}(\Omega)$ into~$C_{-1}^{2}(\Omega)$. Of course, this map is well-defined provided we check that~$\widetilde{T}$ maps~$X$ in~$C^{2, \beta}_{-1,\gamma}(\Omega)$. This is a consequence of Lemma~\ref{fraclapests>12lem} (when~$s > \frac{1}{2}$) or Lemma~\ref{fraclapests=12lem} (when~$s = \frac{1}{2}$), Lemma~\ref{lem:gradmapprop},  and Theorem~\ref{mainC2alphaexistprop}, whose combined use yields in particular the continuity of~$\widetilde{T}$ and~$T$ via the estimates
\begin{equation} \label{Tuests>12}
\big\| {T[u]} \big\|_X \le C \big\| {\widetilde{T}[u]} \big\|_{C^{2, \beta}_{-1,\gamma}(\Omega)} \le C \Big( {\| f \|_{C^\alpha(\overline\Omega)} + \| u \|_X} \Big) \quad \mbox{for every } u \in X.
\end{equation}

Next, we observe that~$T$ is compact, as a consequence of the compactness of~$\iota$ warranted by Lemma~\ref{compactembedlem}. Hence, in order to obtain a fixed-point for~$T$ (and thus a solution of~\eqref{dir}), we only need to check the validity of the \textit{a priori} estimate~\eqref{8492734}. As in the proof of Proposition~\ref{mainlinears<12prop}, any~$v \in X$ satisfying~$v = \lambda \, T[v]$ for some~$\lambda \in [0, 1]$ solves in particular the Dirichlet problem~\eqref{lambdavprob}. By applying to it~\eqref{Tuests>12}, the interpolation inequality of Lemma~\ref{interpollem}, and the weighted~$L^\infty$ estimate of Theorem~\ref{lineargrowththm}, we deduce that
\begin{align*}
\| v \|_{C^{2, \beta}_{-1,\gamma}(\Omega)} & = \lambda \, \big\| {\widetilde{T}[v]} \big\|_{C^{2, \beta}_{-1,\gamma}(\Omega)} \le C \Big( {\| f \|_{C^\alpha(\overline\Omega)} + \| v \|_{C^2_{-1}(\Omega)}} \Big) \\
& \le C \left( {\| f \|_{C^\alpha(\overline\Omega)} + \| v \|_{C^0_{-1}(\Omega)}^{\frac{\beta}{2 (1 + \beta)}} \| v \|_{C^{2, \beta}_{-1}(\Omega)}^{\frac{2 + \beta}{2 (1 + \beta)}}} \right) \le C \left( {\| f \|_{C^\alpha(\overline\Omega)} + \| f \|_{L^\infty(\Omega)}^{\frac{\beta}{2 (1 + \beta)}} \| v \|_{C^{2, \beta}_{-1,\gamma}(\Omega)}^{\frac{2 + \beta}{2 (1 + \beta)}}} \right),
\end{align*}
from which claim~\eqref{8492734} follows after an application of the weighted Young's inequality.

We have thus proved that problem~\eqref{dir} admits a solution~$u \in C^{2, \beta}_{-1,\gamma}(\Omega)$. Of course, its uniqueness is granted by Proposition~\ref{prop:comparison}, while its unweighted global fractional regularity follows from the embedding of Lemma~\ref{C2alphaisunweightedlem}.
\end{proof}

\section{Sharpness of the boundary regularity. Proof of Theorem~\ref{counterthm}}\label{sec:counter}

Given~$s \in (0, 1)$, we consider the one-dimensional fractional Laplace operator
$$
(-\Delta)^s u(x) \coloneqq C_s \, \pv \int_\R \frac{u(x) - u(y)}{|x - y|^{1 + 2 s}} \, dy,
$$
with~$C_s = \frac{2^{2 s} \Gamma \left( \frac{1 + 2 s}{2} \right)}{\sqrt{\pi} \Gamma (2 - s)} s (1 - s)$, as well as the mixed local-nonlocal operator
$$
- u'' + (-\Delta)^s u.
$$

In order to construct~$u_k$ and~$f_k$ as sought in the statement of Theorem~\ref{counterthm}, we consider the following auxiliary functions. Given~$\alpha \in (0, +\infty)$ and~$j \in \N$, let~$u_{\alpha,0},u_{\alpha, j} \in L^\infty(\R) \cap C^0(\R) \cap C^\infty((0, 1))$ be the functions defined by
$$
u_{\alpha, 0}(x) \coloneqq \begin{cases}
0 & \quad \mbox{if } x \in (-\infty, 0], \\
x^\alpha  & \quad \mbox{if } x \in (0, 1), \\
1 & \quad \mbox{if } x \in [1, +\infty),
\end{cases}
\qquad
u_{\alpha, j}(x) \coloneqq \begin{cases}
0 & \quad \mbox{if } x \in (-\infty, 0]\cup[1,+\infty), \\
x^\alpha (\log x)^j  & \quad \mbox{if } x \in (0, 1).
\end{cases}
$$
In the following, in order to reduce and shorten the notation, we are going to write~$\log^j\!x$ instead of~$(\log x)^j$.
In the next lemma, we compute the fractional Laplacian of these functions inside the interval~$\left( 0, \frac{1}{2} \right)$.

\begin{lemma} \label{buildingblocklem}
Let~$s \in (0, 1)$,~$\alpha \in (0, +\infty)$, and~$j \in \N \cup \{ 0 \}$. Then, there exist constants~$a_{\alpha, j}^{(0)}, \ldots, a_{\alpha, j}^{(j + 1)} \in \R$ and a function~$f_{\alpha, j} \in C^\infty \! \left(  \left[ 0, \frac{1}{2} \right] \right)$ such that
\begin{equation} \label{fraclapofuraz}
(- \Delta)^s u_{\alpha, j}(x) = x^{\alpha - 2 s} \sum_{k = 0}^{j + 1} a_{\alpha, j}^{(k)} \,  \log^k \! x + f_{\alpha, j}(x) \qquad \mbox{for every } x \in \left( 0, \frac{1}{2} \right).
\end{equation}
Furthermore,~$a_{\alpha, j}^{(j + 1)} = 0$ when~$\alpha - 2 s \notin \N \cup \{ 0 \}$ and~$a_{\alpha, j}^{(0)} = 0$ when~$\alpha - 2 s \in \N \cup \{ 0 \}$. 
In addition,~$a_{1, 0}^{(0)} \ne 0$ if~$s \ne \frac{1}{2}$ and~$a_{1, 0}^{(1)} \ne 0$ if~$s = \frac{1}{2}$.
\end{lemma}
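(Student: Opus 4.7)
The approach is to connect $(-\Delta)^s u_{\alpha, j}$ to an analytic family of simpler functions whose fractional Laplacian is explicit by scaling, and to read off~\eqref{fraclapofuraz} from the Laurent expansion of this family at the resonant values for which $\alpha - 2s \in \N \cup \{0\}$. To set things up, I would fix a smooth cutoff $\eta \in C_c^\infty(\R)$ with $\eta \equiv 1$ on $[-1, 1]$ and $\eta \equiv 0$ outside $[-2, 2]$, and consider $v_\alpha(y) \coloneqq y_+^\alpha \, \eta(y)$, so that $\partial_\alpha^j v_\alpha(y) = y_+^\alpha \log^j(y_+) \, \eta(y)$. Since $\partial_\alpha^j v_\alpha$ coincides with $u_{\alpha, j}$ on $(-\infty, 1)$, their difference is bounded and supported in $[1, +\infty)$, and its fractional Laplacian on $(0, 1/2)$ is given by a convergent integral of a function smooth in $x$, yielding an element of $C^\infty([0, 1/2])$ that can be absorbed into $f_{\alpha, j}$. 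The whole question thus reduces to the computation of $(-\Delta)^s(\partial_\alpha^j v_\alpha)(x)$ on $(0, 1/2)$.

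For $\alpha \in (-1, 2s)$ the function $y_+^\alpha$ is integrable against the kernel and the scaling substitution $y = xt$ yields the classical identity $(-\Delta)^s(y_+^\alpha)(x) = \lambda(\alpha) \, x^{\alpha - 2s}$ for every $x > 0$, with $\lambda(\alpha) \coloneqq C_s \, \pv \int_\R (1 - t_+^\alpha) |1 - t|^{-1 - 2s} \, dt$. Writing $v_\alpha = y_+^\alpha - y_+^\alpha(1 - \eta)$ and noticing that $y_+^\alpha(1 - \eta)$ is supported in $[1, +\infty)$, we obtain $(-\Delta)^s v_\alpha(x) = \lambda(\alpha) \, x^{\alpha - 2s} + r_\alpha(x)$ with $r_\alpha \in C^\infty([0, 1/2])$ depending analytically on $\alpha$ in this range. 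I would then extend this identity to all $\alpha > 0$ by analytic continuation: the map $\alpha \mapsto (-\Delta)^s v_\alpha(x)$ is analytic for $\alpha > -1$ at each fixed $x \in (0, 1/2)$ (since $v_\alpha$ is a bounded, compactly supported family), whereas $\lambda$ extends meromorphically with simple poles exactly at $\alpha \in 2s + (\N \cup \{0\})$---a fact to be verified via an explicit Beta-function evaluation of $\lambda$. Differentiating $j$ times in $\alpha$ and interchanging with $(-\Delta)^s$ gives
\[
(-\Delta)^s(\partial_\alpha^j v_\alpha)(x) = x^{\alpha - 2s} \sum_{k = 0}^j \binom{j}{k} \lambda^{(j - k)}(\alpha) \log^k x + \partial_\alpha^j r_\alpha(x),
\]
which, in the non-resonant regime, is a polynomial of degree at most $j$ in $\log x$ times $x^{\alpha - 2s}$ plus a smooth remainder, so $a_{\alpha, j}^{(j + 1)} = 0$.

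At a resonance $\alpha_0 = 2s + m$, the analyticity in $\alpha$ of the left-hand side forces the pole of $\lambda(\alpha) \, x^{\alpha - 2s}$---simple, with residue $A_m x^m$ where $A_m \coloneqq \operatorname{Res}_{\alpha_0} \lambda$---to be compensated by an opposite pole in $r_\alpha(x)$. Combining this with the Taylor expansion $x^{\alpha - 2s} = x^m \sum_{p \ge 0} (\alpha - \alpha_0)^p \log^p x / p!$ yields
\[
\lambda(\alpha) \, x^{\alpha - 2s} + r_\alpha(x) = A_m x^m \sum_{q \ge 0} \frac{(\alpha - \alpha_0)^q \log^{q + 1} x}{(q + 1)!} + H(\alpha, x),
\]
with $H$ jointly analytic in $\alpha$. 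Evaluating $\partial_\alpha^j$ at $\alpha_0$ then produces a genuine $\log^{j + 1} x$ coefficient equal to $A_m/(j + 1)$, together with intermediate $\log^p x$ contributions for $p = 1, \ldots, j$ each of the form $x^m$-times-polynomial-in-$\log x$, while the only $\log^0 x$ contribution, coming as a pure multiple of $x^m$, is smooth and is absorbed into $f_{\alpha, j}$; hence $a_{\alpha, j}^{(0)} = 0$ at every resonance. The last assertions, about $a_{1, 0}^{(0)}$ (for $s \ne 1/2$) and $a_{1, 0}^{(1)}$ (for $s = 1/2$), reduce to $\lambda(1) \ne 0$ and $\operatorname{Res}_{\alpha = 1} \lambda \ne 0$ respectively, which both follow from the explicit Beta-function expression for $\lambda$.

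The main technical hurdle I anticipate is the rigorous justification of the analytic continuation at the resonances---in particular, showing that $r_\alpha(x)$ really does develop a simple pole in $\alpha$ with exactly the residue needed to cancel that of $\lambda(\alpha) \, x^{\alpha - 2s}$---together with the explicit Beta-function computation required to pin down $\lambda(1)$ and its residue responsible for the degenerate critical case $s = 1/2$.
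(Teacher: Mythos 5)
Your route is genuinely different from the paper's. The paper proves the lemma by brute force: after the substitution $y = xt$ it splits the principal-value integral, expands $\log^j(xt)$ binomially and $(1-v)^{-1-2s}$ into its power series, and integrates term by term; the resonant index $i_\star$ with $2s-\alpha+i_\star=0$ is exactly what produces the extra $\log^{j+1}x$, the $\log^0$ coefficient of $x^{\alpha-2s}=x^{i_\star}$ is a monomial absorbed into $f_{\alpha,j}$, and the case $\alpha=1$, $j=0$ is computed in closed form ($a_{1,0}^{(0)}=\tfrac{C_s}{2s(1-2s)}$, resp.\ $a_{1,0}^{(1)}=C_{1/2}$). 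Your analytic-continuation-in-$\alpha$ picture explains the same phenomena more conceptually (simple poles of $\lambda$ at $\alpha\in 2s+(\N\cup\{0\})$ generate the $\log^{j+1}$ and kill the $\log^0$ coefficient), and it correctly predicts every structural claim of the lemma. However, as written it has a genuine gap at its central step.

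The gap is the smoothness in $x$, up to $x=0$, of the remainder beyond the range $\alpha\in(-1,2s)$. Your explicit formula $r_\alpha(x)=-(-\Delta)^s\big(y_+^\alpha(1-\eta)\big)(x)$ is a convergent integral over $[1,+\infty)$ only for $\alpha<2s$; for $\alpha\ge 2s$ the splitting $v_\alpha=y_+^\alpha-y_+^\alpha(1-\eta)$ is not available because neither piece has a convergent fractional Laplacian, so $r_\alpha$ exists only as the difference $(-\Delta)^s v_\alpha(x)-\lambda(\alpha)x^{\alpha-2s}$ defined by continuation. Analyticity of $\alpha\mapsto(-\Delta)^s v_\alpha(x)$ at each \emph{fixed} $x$ does give you the pointwise values of $r_\alpha$ and the cancellation of the pole's residue $A_m x^m$ (a polynomial), but it does not transfer the statement ``$r_\alpha\in C^\infty([0,\tfrac12])$'' across $\alpha=2s$: membership in $C^\infty$ up to the boundary is a property of the $x$-dependence, and you would need analyticity of $\alpha\mapsto r_\alpha$ as a map into the Fréchet space $C^\infty([0,\tfrac12])$ (or an explicit formula valid for all $\alpha$, e.g.\ obtained by reducing $\alpha$ modulo integer shifts via $\tfrac{d}{dx}(-\Delta)^s v_\alpha=\alpha(-\Delta)^s v_{\alpha-1}+(\text{smooth})$). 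Without this, the claim that $\partial_\alpha^j H(\alpha_0,\cdot)$ decomposes as $x^m\cdot(\text{polynomial in }\log x)$ plus a $C^\infty([0,\tfrac12])$ function --- which is precisely what the lemma asserts about $f_{\alpha,j}$ and about $a_{\alpha,j}^{(0)}=0$ at resonances --- is not established. In addition, the location and \emph{simplicity} of the poles of $\lambda$ (a double pole would inject a non-smooth $x^m\log x$ into the residue and break the cancellation argument), the value $\lambda(1)\ne 0$ for $s\ne\tfrac12$, and $\operatorname{Res}_{\alpha=1}\lambda\ne 0$ for $s=\tfrac12$ are all deferred to an unperformed Beta-function computation; these are not routine afterthoughts but exactly the quantitative inputs the counterexample of Theorem~\ref{counterthm} rests on, and the paper obtains them for free from its direct expansion.
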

\begin{proof}
Changing variables appropriately, for~$x \in \left( 0, \frac{1}{2} \right)$ we compute
\begin{equation} \label{techcompujk}
\begin{aligned}
C_s^{-1} (- \Delta)^s u_{\alpha, j}(x)
& = \frac{x^{\alpha - 2 s} \log^j \! x}{2 s} + x^{\alpha - 2 s} \, \pv \int_0^2 \frac{\log^j \! x - t^{\alpha} \log^j (x t)}{|1 - t|^{1 + 2 s}} \, dt + \frac{x^{\alpha - 2 s} \log^j \! x}{2 s} \\
& \quad - \frac{1}{2 s} \frac{x^{\alpha} \log^j \! x}{(1 - x)^{2 s}} - x^{\alpha - 2 s} \int_2^{\frac{1}{x}} \frac{t^{\alpha} \log^j (x t)}{(t - 1)^{1 + 2 s}} \, dt + \frac{1}{2 s} \frac{x^{\alpha} \log^j \! x}{(1 - x)^{2 s}} - \frac{1}{2 s} \frac{\delta_{j 0}}{(1 - x)^{2 s}} \\
& = \left\{ \frac{1}{s} + \pv \int_0^2 \frac{1 - t^\alpha}{|1 - t|^{1 + 2 s}} \, dt \right\} x^{\alpha - 2 s} \log^j \! x \\
& \quad - \sum_{k = 0}^{j - 1} \left\{ \binom {j}{k} \, \pv \int_0^2 \frac{t^\alpha \log^{j - k} \! t}{|1 - t|^{1 + 2 s}} \, dt \right\} x^{\alpha - 2 s} \log^k \! x \\
& \quad - \sum_{k = 0}^j \left\{ (-1)^{j - k} \binom{j}{k} \int_x^{\frac{1}{2}} \frac{v^{2 s - 1 - \alpha} \log^{j - k} \! v}{(1 - v)^{1 + 2 s}} \, dv \right\}  x^{\alpha - 2 s} \log^k \! x - \frac{1}{2 s} \frac{\delta_{j 0}}{(1 - x)^{2 s}}.
\end{aligned}
\end{equation}
In order to expand in~$x$ the integral function appearing on the last line, we observe that
$$
\frac{1}{(1 - v)^{1 + 2 s}} = \sum_{i = 0}^\infty \binom{i + 2 s}{i} v^i, \quad \mbox{where, for~$\beta \in \R$, } \binom{\beta}{i} \coloneqq \begin{dcases}
1 & \mbox{if } i = 0, \\
\frac{1}{i!} \prod_{\ell = 0}^{i - 1} \left( \beta - \ell \right) & \mbox{if } i \in \N.
\end{dcases}
$$
Hence, by Lebesgue's dominated convergence theorem,
$$
\int_x^{\frac{1}{2}} \frac{v^{2 s - 1 - \alpha} \log^{j - k} \! v}{(1 - v)^{1 + 2 s}} \, dv = \sum_{i = 0}^\infty \binom{i + 2 s}{i} \int_x^{\frac{1}{2}} v^{2 s - 1 - \alpha + i} \log^{j - k} \! v \, dv.
$$

To evaluate the last integral, we distinguish between the two cases~$\alpha - 2 s \in \N \cup \{ 0\}$ and~$\alpha - 2 s \notin \N \cup \{ 0 \}$. In the first case, there exists a unique~$i_\star \in \N \cup \{ 0 \}$ such that~$2 s - \alpha + i_\star = 0$. As a result, after a few integration by parts, we find that
\begin{align*}
\int_x^{\frac{1}{2}} \frac{v^{2 s - 1 - \alpha} \log^{j - k} \! v}{(1 - v)^{1 + 2 s}} \, dv & = (-1)^{j - k} \sum_{i \in \left( \N \cup \{ 0 \} \right) \setminus \{ i_\star \}} \binom{i + 2 s}{i} 2^{\alpha - 2 s - i} \sum_{\ell = 0}^{j - k} \frac{(j - k)_\ell \log^{j - k - \ell} \! 2}{(2 s - \alpha + i)^{\ell + 1}} \\
& \quad - \sum_{i \in \left( \N \cup \{ 0 \} \right) \setminus \{ i_\star \}} \binom{i + 2 s}{i} x^{2 s - \alpha + i} \sum_{\ell = 0}^{j - k} \frac{(-1)^\ell (j - k)_\ell}{(2 s - \alpha + i)^{\ell + 1}} \log^{j - k - \ell} \! x \\
& \quad + \binom{i_\star + 2 s}{i_\star} \left( \frac{(-1)^{j - k + 1} \log^{j - k + 1}  \! 2}{j - k + 1} - \frac{\log^{j - k + 1} \! x}{j - k + 1} \right),
\end{align*}
where~$(m)_\ell = \frac{m!}{(m - \ell)!}$ indicates the descending Pochhammer symbol. Combining this with~\eqref{techcompujk}, we get
\begin{align*}
& C_s^{-1} (- \Delta)^s u_{\alpha, j}(x) \\
& \hspace{5pt} = \left\{ \frac{1}{s} + \pv \int_0^2 \frac{1 - t^\alpha}{|1 - t|^{1 + 2 s}} \, dt \right\} x^{\alpha - 2 s} \log^j \! x - \sum_{k = 0}^{j - 1} \left\{ \binom{j}{k} \, \pv \int_0^2 \frac{t^\alpha \log^{j - k} \! t}{|1 - t|^{1 + 2 s}} \, dt \right\} x^{\alpha - 2 s} \log^k \! x \\
& \hspace{5pt} \quad - \sum_{k = 0}^j \binom{j}{k} \left\{ \sum_{i \in \left( \N \cup \{ 0 \} \right) \setminus \{ i_\star \}} \binom{i + 2 s}{i} 2^{\alpha - 2 s - i} \sum_{\ell = 0}^{j - k} \frac{(j - k)_\ell \log^{j - k - \ell} \! 2}{(2 s - \alpha + i)^{\ell + 1}} - \binom{i_\star + 2 s}{i_\star} \frac{\log^{j - k + 1}  \! 2}{j - k + 1} \right\}  x^{\alpha - 2 s} \log^k \! x \\
& \hspace{5pt} \quad + \sum_{k = 0}^j \left\{ (-1)^{j - k} \binom{j}{k} \sum_{i \in \left( \N \cup \{ 0 \} \right) \setminus \{ i_\star \}} \binom{i + 2 s}{i} \sum_{\ell = 0}^{j - k} \frac{(-1)^\ell (j - k)_\ell}{(2 s - \alpha + i)^{\ell + 1}} x^i \log^{j - \ell} \! x \right\} \\
& \hspace{5pt} \quad + \left\{ \binom{i_\star + 2 s}{i_\star} \sum_{\ell = 0}^j \binom{j}{\ell} \frac{(-1)^{j - \ell}}{j - \ell + 1} \right\} x^{\alpha - 2 s} \log^{j + 1} \! x - \frac{1}{2 s} \frac{\delta_{j 0}}{(1 - x)^{2 s}}.
\end{align*}
Notice now that, by changing indices as~$m = j - k$ and exchanging the order of summation,
\begin{align*}
& \sum_{k = 0}^j \left\{ (-1)^{j - k} \binom{j}{k} \sum_{i \in \left( \N \cup \{ 0 \} \right) \setminus \{ i_\star \}} \binom{i + 2 s}{i} \sum_{\ell = 0}^{j - k} \frac{(-1)^\ell (j - k)_\ell}{(2 s - \alpha + i)^{\ell + 1}} x^i \log^{j - \ell} \! x \right\} \\
& = \sum_{i \in \left( \N \cup \{ 0 \} \right) \setminus \{ i_\star \}} \binom{i + 2 s}{i} \sum_{\ell = 0}^j \left\{ \sum_{m = \ell}^j \binom{j}{j - m} (-1)^{m - \ell} (m)_\ell \right\} \frac{x^i \log^{j - \ell} \! x}{(2 s - \alpha + i)^{\ell + 1}} \\
& = j! \sum_{i \in \left( \N \cup \{ 0 \} \right) \setminus \{ i_\star \}} \binom{i + 2 s}{i} \frac{x^i}{(2 s - \alpha + i)^{j + 1}},
\end{align*}
where for the last identity we used that~$\binom{j}{j - m} (m)_\ell = \frac{j!}{m! (j - m)!} \frac{m!}{(m - \ell)!} = \frac{j!}{(j - \ell)!} \binom{j - \ell}{m - \ell}$ to deduce that, shifting the index of summation,
\begin{align*}
\sum_{m = \ell}^j \binom{j}{j - m} (-1)^{m - \ell} (m)_\ell & = \frac{j!}{(j - \ell)!} \sum_{k = 0}^{j - \ell} \binom{j - \ell}{k} (-1)^k 1^{j - \ell - k} = \frac{j!}{(j - \ell)!} (1 - 1)^{j - \ell} = \delta_{j \ell} \, j!.
\end{align*}
Accordingly, the claim of the lemma follows with
\begin{align*}
a_{\alpha, j}^{(0)} =\ & 0, \\
a_{\alpha, j}^{(k)} =\ & - C_s \binom{j}{k} \Bigg\{ \pv \int_0^2 \frac{t^\alpha \log^{j - k} \! t}{|1 - t|^{1 + 2 s}} \, dt + \sum_{i \in \left( \N \cup \{ 0 \} \right) \setminus \{ i_\star \}} \binom{i + 2 s}{i} 2^{\alpha - 2 s - i} \sum_{\ell = 0}^{j - k} \frac{(j - k)_\ell \log^{j - k - \ell} \! 2}{(2 s - \alpha + i)^{\ell + 1}}  \\
& -\binom{i_\star + 2 s}{i_\star} \frac{\log^{j - k + 1}  \! 2}{j - k + 1} \Bigg\}, \\
a_{\alpha, j}^{(j)} =\ & C_s \Bigg\{ \frac{1}{s} + \pv \int_0^2 \frac{1 - t^\alpha}{|1 - t|^{1 + 2 s}} \, dt - \! \sum_{i \in \left( \N \cup \{ 0 \} \right) \setminus \{ i_\star \}} \! \binom{i + 2 s}{i} \frac{2^{\alpha - 2 s - i}}{2 s - \alpha + i} + \binom{i_\star + 2 s}{i_\star} \log 2 \Bigg\}, \\
a_{\alpha, j}^{(j + 1)} =\ & C_s \binom{i_\star + 2 s}{i_\star} \sum_{\ell = 0}^j \binom{j}{\ell} \frac{(-1)^{j - \ell}}{j - \ell + 1}, \\
f_{\alpha, j}(x) =\ & - C_s \Bigg\{ \Bigg( \pv \int_0^2 \frac{t^\alpha \log^j \! t}{|1 - t|^{1 + 2 s}} \, dt + \sum_{i \in \left( \N \cup \{ 0 \} \right) \setminus \{ i_\star \}} \binom{i + 2 s}{i} 2^{\alpha - 2 s - i} \sum_{\ell = 0}^{j} \frac{(j)_\ell \log^{j - \ell} \! 2}{(2 s - \alpha + i)^{\ell + 1}}  \\
& -\binom{i_\star + 2 s}{i_\star} \frac{\log^{j + 1}  \! 2}{j + 1} \Bigg) x^{\alpha - 2 s} - j! \sum_{i \in \left( \N \cup \{ 0 \} \right) \setminus \{ i_\star \}} \binom{i + 2 s}{i} \frac{x^i}{(2 s - \alpha + i)^{j + 1}} + \frac{1}{2 s} \frac{\delta_{j 0}}{(1 - x)^{2 s}} \Bigg\},
\end{align*}
for every~$k = 1, \ldots, j - 1$. Note that~$a_{\alpha, j}^{(0)}$ is equal to zero since the corresponding term in the sum~\eqref{fraclapofuraz} has been incorporated into the smooth remainder~$f_{\alpha, j}$, as~$\alpha - 2s \in \N \cup \{ 0 \}$.

When~$\alpha - 2 s \notin \N \cup \{ 0 \}$, then no such~$i_\star$ exists. Nevertheless, the above computations are still valid, provided that the terms involving~$i_\star$ are neglected and the sums over~$\left( \N \cup \{ 0 \} \right) \setminus \{ i_\star \}$ are understood to be over the non-negative integers. The resulting values for the coefficients~$a_{\alpha, j}^{(k)}$'s and the function~$f_{\alpha, j}$ are
\begin{align*}
a_{\alpha, j}^{(k)} =\ & - C_s \binom{j}{k} \Bigg\{ {\pv \int_0^2 \frac{t^\alpha \log^{j - k} \! t}{|1 - t|^{1 + 2 s}} \, dt + \sum_{i = 0}^{\infty} \binom{i + 2 s}{i} 2^{\alpha - 2 s - i} \sum_{\ell = 0}^{j - k} \frac{(j - k)_\ell \log^{j - k - \ell} \! 2}{(2 s - \alpha + i)^{\ell + 1}}} \Bigg\}, \\
a_{\alpha, j}^{(j)} =\ & C_s \left\{ \frac{1}{s} + \pv \int_0^2 \frac{1 - t^\alpha}{|1 - t|^{1 + 2 s}} \, dt - \sum_{i = 0}^\infty \! \binom{i + 2 s}{i} \frac{2^{\alpha - 2 s - i}}{2 s - \alpha + i} \right\}, \\
a_{\alpha, j}^{(j + 1)} =\ & 0, \\
f_{\alpha, j}(x) =\ & C_s \left\{ j! \sum_{i = 0}^\infty \binom{i + 2 s}{i} \frac{x^i}{(2 s - \alpha + i)^{j + 1}} - \frac{1}{2 s} \frac{\delta_{j 0}}{(1 - x)^{2 s}} \right\},
\end{align*}
for every~$k = 1, \ldots, j - 1$.

Finally, the statement concerning the non-vanishing nature of the coefficients~$a_{1, 0}^{(0)}$ (when~$s \ne \frac{1}{2}$) and~$a_{1, 0}^{(1)}$ (when~$s = \frac{1}{2}$) follows from a direct inspection of identity~\eqref{techcompujk} when~$\alpha = 1$ and~$j = 0$. Indeed, in this case claim~\eqref{fraclapofuraz} holds true with~$a_{1, 0}^{(0)} = \frac{C_s}{2s (1 - 2 s)}$,~$a_{1, 0}^{(1)} = 0$, and~$f_{1, 0}(x) = - \frac{C_s}{2 s (1 - 2 s)} (1 - x)^{1 - 2 s}$ when~$s \ne \frac{1}{2}$ and with~$a_{1, 0}^{(0)} = 0$,~$a_{1, 0}^{(1)} = C_{\frac{1}{2}}$, and~$f_{1, 0}(x) = - C_{\frac{1}{2}} \log(1 - x)$ when~$s = \frac{1}{2}$.
\end{proof}

Thanks to this result, we may now proceed with the

\begin{proof}[Proof of Theorem~\ref{counterthm}]
We distinguish between the two cases~$s \notin \Q$ and~$s \in \Q$. In the first case, we let~$M \in \N \setminus \{ 1 \}$ and set
$$
v_M(x) \coloneqq \sum_{i = 0}^M b_i \, u_{2 (1 - s) i + 1, 0}(x),
$$
for some coefficients~$b_i \in \R$ to be determined. For~$x \in \left( 0, \frac{1}{2} \right)$, taking advantage of Lemma~\ref{buildingblocklem} we have that
\begin{align*}
& - v_M''(x) + (-\Delta)^s v_M(x) = \sum_{i = 0}^M b_i \Big\{ {- u_{2 (1 - s) i + 1, 0}''(x) + (-\Delta)^s u_{2 (1 - s) i + 1, 0}(x)} \Big\} \\
&\qquad = 
\sum_{i = 0}^M b_i \Big\{ {- 2 (1 - s) i (2 (1 - s) i + 1) x^{2 (1 - s) i - 1} + a_{2 (1 - s) i + 1, 0}^{(0)} \, x^{2 (1 - s) i + 1 - 2 s}  + f_{2 (1 - s) i + 1, 0}(x)} \Big\} \\
&\qquad = 
\sum_{i = 1}^{M} \Big\{ {a_{2 (1 - s) (i - 1) + 1, 0}^{(0)} \, b_{i - 1} - 2 (1 - s) i (2 (1 - s) i + 1) \, b_i} \Big\} x^{2 (1 - s) i - 1} \\
&\qquad \quad + a_{2 (1 - s) M + 1, 0}^{(0)} \, b_M \, x^{2 (1 - s) (M + 1) - 1} + \sum_{i = 0}^M b_i f_{2 (1 - s) i + 1, 0}(x).
\end{align*}
Here we used the fact that~$2 (1 - s) i + 1 - 2 s \notin \N \cup \{ 0 \}$ for every~$i \in \N \cup \{ 0 \}$, as~$s \notin \Q$. By choosing
$$
\begin{dcases}
b_0 = 1, & \\
b_i = \frac{a_{2 (1 - s) (i - 1) + 1, 0}^{(0)} \, b_{i - 1}}{2 (1 - s) i (2 (1 - s) i + 1)} & \quad \mbox{for } i \in \{ 1, \ldots, M \},
\end{dcases}
$$
we obtain that
$$
- v_M''(x) + (-\Delta)^s v_M(x) = a_{2 (1 - s) M + 1, 0} \, b_M \, x^{2 (1 - s) (M + 1) - 1} + \sum_{i = 0}^M b_i f_{2 (1 - s) i + 1, 0}(x) \qquad \mbox{for all } x \in \left( 0, \frac{1}{2} \right).
$$
Notice that the right-hand side of this equation belongs to~$C^k \! \left( \left[ 0, \frac{1}{2} \right] \right)$ if we take, say,~$M = M_1(k) \coloneqq \left\lceil \! \frac{k + 1}{2 (1 - s)} \! \right\rceil$. Observe that the function~$u_k \coloneqq v_{M_1(k)}$ thus constructed lies in~$C^{3 - 2 s} \! \left( \left[ 0, \frac{1}{2} \right] \right)$, but is not of class~$C^{3 - 2 s + \varepsilon}$ at~$0$ for any~$\varepsilon > 0$. Indeed,
\begin{equation} \label{irrexpan}
u_k(x) = x + \frac{a_{1, 0}^{(0)}}{2 (1 - s) (3 - 2 s)} \, x^{3 - 2 s} + o \! \left( x^{3 - 2 s} \right) \quad \mbox{as } x \rightarrow 0^+,
\end{equation}
and~$a_{1, 0}^{(0)} \ne 0$, thanks to Lemma~\ref{buildingblocklem}.

We now address the case of~$s \in \Q$. To handle it, we need a more refined construction. Let~$p, q \ge 1$ be the two unique coprime integers such that~$2 (1 - s) = \frac{p}{q}$. Then,~$2 (1 - s) i + 1 - 2 s \in \N \cup \{ 0 \}$ for some non-negative integer~$i$ if and only if~$i + 1 \in q \N$. Given any~$M \in \N \setminus \{ 1 \}$, we define
$$
w_M(x) \coloneqq \sum_{m = 0}^{M} \sum_{\ell = 1}^{q} \sum_{j = 0}^m b_{m, \ell, j} \, u_{2 (1 - s) (m q - 1 + \ell) + 1, j}(x),
$$
for some coefficients~$b_{m, \ell, j} \in \R$ to be determined. Lemma~\ref{buildingblocklem} yields that
\begin{align} 
- w_M''(x) + (-\Delta)^s w_M(x) & = \sum_{m = 0}^{M} \sum_{\ell = 1}^{q} \sum_{j = 0}^m b_{m, \ell, j} \left\{ - u_{2 (1 - s) (m q - 1 + \ell) + 1, j}''(x) + (-\Delta)^s u_{2 (1 - s) (m q - 1 + \ell) + 1, j}(x) \right\} \nonumber \\
& = A_1(x) + A_2(x) + A_3(x) + \sum_{m = 0}^{M} \sum_{\ell = 1}^q \sum_{j = 0}^m b_{m, \ell, j} \, f_{2 (1 - s) (m q - 1 + \ell) + 1, j}(x), \label{LwM}
\end{align}
for every~$x \in \left( 0, \frac{1}{2} \right)$, with
\begin{align*}
A_1(x) & \coloneqq \sum_{\ell = 1}^{q - 1} b_{0, \ell, 0} \left\{ a_{2 (1 - s) (\ell - 1) + 1, 0}^{(0)} \, x^{2 (1 - s) \ell - 1} - 2 (1 - s) (\ell - 1) \big( {2 (1 - s) (\ell - 1) + 1} \big) \, x^{2(1 - s)(\ell - 1) - 1} \right\} \\
& \quad\, + b_{0, q, 0} \left\{ a_{2 (1 - s) (q - 1) + 1, 0}^{(1)} \, x^{2 (1 - s) q - 1} \log x - 2 (1 - s) (q - 1) \big( {2 (1 - s) (q - 1) + 1} \big) x^{2(1 - s)(q - 1) - 1} \right\}, \\
A_2(x) & \coloneqq \sum_{m = 1}^{M} \sum_{\ell = 1}^{q - 1} \sum_{j = 0}^m b_{m, \ell, j} \left\{ \vphantom{\sum_{k = 0}^{j + 1} a_{2 (1 - s) (m q - 1 + \ell) + 1, j}^{(k)} \, x^{2 (1 - s) (m q - 1 + \ell) + 1 - 2 s} \log^k \! x} {- x^{2(1 - s)(m q - 1 + \ell) - 1} \Big( {2 (1 - s) (m q - 1 + \ell) \big( {2 (1 - s) (m q - 1 + \ell) + 1} \big) \log^j \! x}} \right. \\
& \quad\, + {\big( {4 (1 - s) (m q - 1 + \ell) + 1} \big) j \log^{j - 1} \! x + j (j - 1) \log^{j - 2} \! x} \Big)  \\
& \quad\, \left. + \, x^{2 (1 - s) (m q + \ell) - 1} \sum_{k = 0}^{j} a_{2 (1 - s) (m q - 1 + \ell) + 1, j}^{(k)} \log^k \! x \right\}, \\
A_3(x) & \coloneqq \sum_{m = 1}^{M} \sum_{j = 0}^m b_{m, q, j} \left\{ \vphantom{\sum_{k = 0}^{j + 1} a_{2 (1 - s) ( (m + 1) q - 1) + 1, j}^{(k)} \, x^{2 (1 - s) ( (m + 1) q - 1) + 1 - 2 s} \log^k \! x} {- x^{2(1 - s)((m + 1) q - 1) - 1} \Big( {2 (1 - s) \big( {(m + 1) q - 1} \big) \big( {2 (1 - s) \big( {(m + 1) q - 1} \big) + 1} \big) \log^j \! x}} \right. \\
& \quad\, + {\big( {4 (1 - s) \big( {(m + 1) q - 1} \big) + 1} \big) j \log^{j - 1} \! x + j (j - 1) \log^{j - 2} \! x} \Big)  \\
& \quad\, \left. + \, x^{2 (1 - s) (m + 1) q - 1} \sum_{k = 1}^{j + 1} a_{2 (1 - s) ( (m + 1) q - 1) + 1, j}^{(k)} \, \log^k \! x \right\},
\end{align*}
for~$m \in \{ 0, \ldots, M \}$.

We begin by analyzing~$A_1$, which can be dealt with similarly to what we did in the case of irrational~$s$. By splitting it into two sums and shifting indices, we have
\begin{equation} \label{A1simplified}
\begin{aligned}
A_1(x) =\ & 
\sum_{\ell = 1}^{q - 1} \Big\{ {a_{2 (1 - s) (\ell - 1) + 1, 0}^{(0)} \, b_{0, \ell, 0} - 2 (1 - s) \ell \big(2 (1 - s) \ell + 1\big) \, b_{0, \ell + 1, 0}} \Big\} x^{2 (1 - s) \ell - 1} \\
& 
+ a_{2 (1 - s) (q - 1) + 1, 0}^{(1)} \, b_{0, q, 0} \, x^{2 (1 - s) q - 1} \log x \\
=\ &
a_{2 (1 - s) (q - 1) + 1, 0}^{(1)} \, b_{0, q, 0} \, x^{2 (1 - s) q - 1} \log x,
\end{aligned}
\end{equation}
provided we choose the~$b_{0, \ell, 0}$'s recursively as follows:
$$
\begin{dcases}
b_{0, 1, 0} = 1, & \\
b_{0, \ell+1, 0} = \frac{a_{2 (1 - s) (\ell - 1) + 1, 0}^{(0)} \, b_{0, \ell, 0}}{2 (1 - s) \ell \big( {2 (1 - s) \ell + 1} \big)} & \quad \mbox{for } \ell \in \{ 1, \ldots, q-1 \}.
\end{dcases}
$$

We then move to the~$A_2$ term. Our goal is to rearrange it and factor out the different terms of the form~$x^{2 (1 - s) (m q - 1 + \ell) - 1} \log^j \! x$---much like what we just did for~$A_1$, but now with logarithms involved as well. After some tedious computations involving shifts in the sum indices and exchanges of the orders of summation, we find that
$$
A_2(x) = \sum_{m = 1}^M \sum_{\ell = 1}^{q} \sum_{j = 0}^m C_{m, \ell, j} \, x^{2 (1 - s) (m q - 1 + \ell) - 1} \log^j \! x,
$$
with
\begin{align*}
C_{m, 1, j} \coloneqq\ & 
- 2 (1 - s) \big( {2 (1 - s) m q + 1} \big) m q \, b_{m, 1, j} - \big( {4 (1 - s) m q + 1} \big) (j + 1) b_{m, 1, j + 1} \\
& 
- (j + 1) (j + 2) b_{m, 1, j + 2}  \qquad \mbox{for } j \in \{ 0, \ldots, m - 2 \}, \\
C_{m, 1, m - 1} \coloneqq\ & 
- m \Big( {2 (1 - s) \big( {2 (1 - s) m q + 1} \big) q \, b_{m, 1, m - 1} + \big( {4 (1 - s) m q + 1} \big) \, b_{m, 1, m}} \Big), \\
C_{m, 1, m} \coloneqq\ & 
- 2 (1 - s) \big( {2 (1 - s) m q + 1} \big) m q \, b_{m, 1, m}, \\
C_{m, \ell, j} \coloneqq\ & 
\sum_{k = j}^m a_{2 (1 - s) (m q - 2 + \ell) + 1, k}^{(j)} \, b_{m, \ell - 1, k} - 2 (1 - s) (m q - 1 + \ell) \big( {2 (1 - s) (m q - 1 + \ell) + 1} \big) b_{m, \ell, j} \\
& 
- \big( {4 (1 - s) (m q - 1 + \ell) + 1} \big) (j + 1) b_{m, \ell, j + 1} - (j + 1) (j + 2) b_{m, \ell, j + 2} \quad \mbox{for } j \in \{0, \ldots, m - 2\}, \\
C_{m, \ell, m - 1} \coloneqq\ & 
\sum_{k = m - 1}^m a_{2 (1 - s) (m q - 2 + \ell) + 1, k}^{(m - 1)} \, b_{m, \ell - 1, k} - 2 (1 - s) (m q - 1 + \ell) \big( {2 (1 - s) (m q - 1 + \ell) + 1} \big) b_{m, \ell, m - 1} \\
& - \big( {4 (1 - s) (m q - 1 + \ell) + 1} \big) m \, b_{m, \ell, m}, \\
C_{m, \ell, m} \coloneqq\ & 
a_{2 (1 - s) (m q - 2 + \ell) + 1, m}^{(m)} \, b_{m, \ell - 1, m} - 2 (1 - s) (m q - 1 + \ell) \big( {2 (1 - s) (m q - 1 + \ell) + 1} \big) b_{m, \ell, m}, \\
C_{m, q, j} \coloneqq\ & 
\sum_{k = j}^m a_{2 (1 - s) ( (m + 1) q - 2) + 1, k}^{(j)} \, b_{m, q - 1, k} \qquad \mbox{for } j \in \{0, \ldots, m\},
\end{align*}
for~$m \in \{ 1, \ldots, M \}$ and~$\ell \in \{ 2, \ldots, q - 1 \}$. We make the majority of these coefficients vanish by choosing
\begin{align*}
b_{m, \ell, m} & = \frac{a_{2 (1 - s) (m q - 2 + \ell) + 1, m}^{(m)} \, b_{m, \ell - 1, m}}{2 (1 - s) (m q - 1 + \ell) \big( {2 (1 - s) (m q - 1 + \ell) + 1} \big)}, \\
b_{m, \ell, m - 1} & = \frac{\sum\limits_{k = m - 1}^m a_{2 (1 - s) (m q - 2 + \ell) + 1, k}^{(m - 1)} \, b_{m, \ell - 1, k} - \big( {4 (1 - s) (m q - 1 + \ell) + 1} \big) m \, b_{m, \ell, m}}{2 (1 - s) (m q - 1 + \ell) \big( {2 (1 - s) (m q - 1 + \ell) + 1} \big)}, \\
b_{m, \ell, j} & = \frac{\sum\limits_{k = j}^m a_{2 (1 - s) (m q - 2 + \ell) + 1, k}^{(j)} \, b_{m, \ell - 1, k}}{2 (1 - s) (m q - 1 + \ell) \big( {2 (1 - s) (m q - 1 + \ell) + 1} \big)} \\
& \quad - \frac{\big( {4 (1 - s) (m q - 1 + \ell) + 1} \big) (j + 1) b_{m, \ell, j + 1} + (j + 1) (j + 2) b_{m, \ell, j + 2}}{2 (1 - s) (m q - 1 + \ell) \big( {2 (1 - s) (m q - 1 + \ell) + 1} \big)},
\end{align*}
for~$m \in \{ 1, \ldots, M\}$,~$\ell \in \{2, \ldots, q - 1\}$, and~$j \in \{ 0, \ldots, m \}$. This leaves us with
\begin{equation} \label{A2simplified}
A_2(x) = \sum_{m = 1}^M \sum_{j = 0}^m C_{m, 1, j} \, x^{2 (1 - s) m q - 1} \log^j \! x + \sum_{m = 1}^M \sum_{j = 0}^m C_{m, q, j} \, x^{2 (1 - s) ((m + 1) q - 1) - 1} \log^j \! x,
\end{equation}
and~$b_{m, 1, j}$,~$b_{m, q, j}$ still free to choose, for~$m \in \{ 1, \ldots, M \}$ and~$j \in \{ 0, \ldots, m \}$.

In order to get rid of these terms, we now inspect~$A_3$. By rearranging the logarithmic terms, we write it as
$$
A_3(x) = \sum_{m = 1}^M \sum_{j = 0}^m D_{m, q, j} \, x^{2 (1 - s) ((m + 1) q - 1) - 1} \log^j \! x + \sum_{m = 1}^M \sum_{j = 1}^{m + 1} D_{m, q + 1, j} \, x^{2 (1 - s) (m + 1) q - 1} \log^{j} \! x,
$$
with
\begin{align*}
D_{m, q, j} & \coloneqq - 2(1 - s) \big( {(m + 1) q - 1} \big) \big( {2(1 - s) \big( {(m + 1) q - 1} \big) + 1} \big) \, b_{m, q, j} \\
& \quad\,\,\,\, {- \big( {4 (1 - s) \big( {(m + 1) q - 1} \big) + 1} \big)(j + 1) \, b_{m, q, j + 1}} \\
& \quad\,\,\,\, {- (j + 1)(j + 2) \, b_{m, q, j + 2}} \quad \mbox{for } j \in \{ 0, \ldots, m - 2 \}, \\
D_{m, q, m - 1} & \coloneqq - 2(1 - s) \big( {(m + 1) q - 1} \big) \big( {2(1 - s) \big( {(m + 1) q - 1} \big) + 1} \big) \, b_{m, q, m - 1} \\
& \quad\,\,\,\, {- \big( {4 (1 - s) \big( {(m + 1) q - 1} \big) + 1} \big) m \, b_{m, q, m}}, \\
D_{m, q, m} & \coloneqq - 2(1 - s) \big( {(m + 1) q - 1} \big) \big( {2(1 - s) \big( {(m + 1) q - 1} \big) + 1} \big) \, b_{m, q, m}, \\
D_{m, q + 1, j} & \coloneqq \sum_{k = j - 1}^m a_{2 (1 - s) ((m + 1) q - 1) + 1, k}^{(j)} \, b_{m, q, k} \quad \mbox{for } j \in \{ 1, \ldots, m + 1\},
\end{align*}
for~$m \in \{1, \ldots, M\}$. As a result, recalling~\eqref{LwM},~\eqref{A1simplified}, and~\eqref{A2simplified}, we have
\begin{align*}
& - w_M''(x) + (-\Delta)^s w_M(x) = \\
& = \Big( {a_{2 (1 - s) (q - 1) + 1, 0}^{(1)} \, b_{0, q, 0} + C_{1, 1, 1}} \Big) \, x^{2 (1 - s) q - 1} \log x \\
& \quad + \sum_{m = 1}^M C_{m, 1, 0} \, x^{2 (1 - s) m q - 1} + \sum_{m = 2}^M \sum_{j = 1}^m \Big( {C_{m, 1, j} + D_{m - 1, q + 1, j}} \Big) \, x^{2 (1 - s) m q - 1} \log^j \! x \\
& \quad + \sum_{m = 1}^M \sum_{j = 0}^m \Big( {C_{m, q, j} + D_{m, q, j}} \Big) \, x^{2 (1 - s) ((m + 1) q - 1) - 1} \log^j \! x \\
& \quad + \sum_{j = 1}^{M + 1} D_{M, q + 1, j} \, x^{2 (1 - s) (M + 1) q - 1} \log^{j} \! x + \sum_{m = 0}^{M} \sum_{\ell = 1}^q \sum_{j = 0}^m b_{m, \ell, j} \, f_{2 (1 - s) (m q - 1 + \ell) + 1, j}(x).
\end{align*}

By setting recursively
\begin{align*}
b_{1, 1, 1} & = \frac{a_{2 (1 - s) (q - 1) + 1, 0}^{(1)} \, b_{0, q, 0}}{2 (1 - s) \big( {2 (1 - s) q + 1} \big) q}, \qquad b_{1, 1, 0} = - \frac{\big( {4 (1 - s) q + 1} \big) \, b_{1, 1, 1}}{2 (1 - s) \big( {2 (1 - s) q + 1} \big) q}, \\
b_{1, q, 1} & = \frac{a_{4 (1 - s) (q - 1) + 1, 1}^{(1)} \, b_{1, q - 1, 1}}{2(1 - s) \big( {2 q - 1} \big) \big( {2(1 - s) \big( {2 q - 1} \big) + 1} \big)}, \\
b_{1, q, 0} & = \frac{- \big( {4 (1 - s) \big( {2 q - 1} \big) + 1} \big) \, b_{1, q, 1} + a_{4 (1 - s) (q - 1) + 1, 0}^{(0)} \, b_{1, q - 1, 0} + a_{4 (1 - s) (q - 1) + 1, 1}^{(0)} \, b_{1, q - 1, 1}}{2(1 - s) \big( {2 q - 1} \big) \big( {2(1 - s) \big( {2 q - 1} \big) + 1} \big)}, \\
b_{m, 1, m} & = \frac{a_{2 (1 - s) (m q - 1) + 1, m - 1}^{(m)} \, b_{m - 1, q, m - 1}}{2 (1 - s) \big( {2 (1 - s) m q + 1} \big) m q}, \\
b_{m, 1, m - 1} & = \frac{- \big( {4 (1 - s) m q + 1} \big) m \, b_{m, 1, m} + \sum\limits_{k = m - 2}^{m - 1} a_{2 (1 - s) (m q - 1) + 1, k}^{(m - 1)} \, b_{m - 1, q, k}}{2 (1 - s) \big( {2 (1 - s) m q + 1} \big) m q}, \\
b_{m, 1, j} & = \frac{- \big( {4 (1 - s) m q + 1} \big) (j + 1) \, b_{m, 1, j + 1} - (j + 1) (j + 2) \, b_{m, 1, j + 2} + \sum\limits_{k = j - 1}^{m - 1} a_{2 (1 - s) (m q - 1) + 1, k}^{(j)} \, b_{m - 1, q, k}}{2 (1 - s) \big( {2 (1 - s) m q + 1} \big) m q}, \\
b_{m, 1, 0} & = - \frac{\big( {4 (1 - s) m q + 1} \big) b_{m, 1, 1} + 2 b_{m, 1, 2}}{2 (1 - s) \big( {2 (1 - s) m q + 1} \big) m q}, \\
b_{m, q, m} & = \frac{a_{2 (1 - s) ( (m + 1) q - 2) + 1, m}^{(m)} \, b_{m, q - 1, m}}{2(1 - s) \big( {(m + 1) q - 1} \big) \big( {2(1 - s) \big( {(m + 1) q - 1} \big) + 1} \big)}, \\
b_{m, q, m - 1} & = \frac{- \big( {4 (1 - s) \big( {(m + 1) q - 1} \big) + 1} \big) m \, b_{m, q, m} + \sum\limits_{k = m - 1}^m a_{2 (1 - s) ( (m + 1) q - 2) + 1, k}^{(m - 1)} \, b_{m, q - 1, k}}{2(1 - s) \big( {(m + 1) q - 1} \big) \big( {2(1 - s) \big( {(m + 1) q - 1} \big) + 1} \big)}, \\
b_{m, q, j} & = - \frac{\big( {4 (1 - s) \big( {(m + 1) q - 1} \big) + 1} \big)(j + 1) \, b_{m, q, j + 1} + (j + 1)(j + 2) \, b_{m, q, j + 2}}{2(1 - s) \big( {(m + 1) q - 1} \big) \big( {2(1 - s) \big( {(m + 1) q - 1} \big) + 1} \big)} \\
& \quad + \frac{\sum\limits_{k = j}^m a_{2 (1 - s) ( (m + 1) q - 2) + 1, k}^{(j)} \, b_{m, q - 1, k}}{2(1 - s) \big( {(m + 1) q - 1} \big) \big( {2(1 - s) \big( {(m + 1) q - 1} \big) + 1} \big)}, \\
b_{m, q, 0} & = \frac{- \big( {4 (1 - s) \big( {(m + 1) q - 1} \big) + 1} \big) \, b_{m, q, 1} - 2 \, b_{m, q, 2} + \sum\limits_{k = 0}^m a_{2 (1 - s) ( (m + 1) q - 2) + 1, k}^{(0)} \, b_{m, q - 1, k}}{2(1 - s) \big( {(m + 1) q - 1} \big) \big( {2(1 - s) \big( {(m + 1) q - 1} \big) + 1} \big)},
\end{align*}
for~$m \in \{2, \ldots, M\}$ and~$j \in \{ 1, \ldots, m - 2 \}$, the previous expression further simplifies to
\begin{align*}
- w_M''(x) + (-\Delta)^s w_M(x) & = \sum_{j = 1}^{M + 1} D_{M, q + 1, j} \, x^{2 (1 - s) (M + 1) q - 1} \log^{j} \! x + \sum_{m = 0}^{M} \sum_{\ell = 1}^q \sum_{j = 0}^m b_{m, \ell, j} \, f_{2 (1 - s) (m q - 1 + \ell) + 1, j}(x).
\end{align*}
If we now take~$M = M_2(k) \coloneqq \left\lceil \frac{k + 1}{2 (1 - s) q} \right\rceil$, the right-hand side of the above identity belongs to~$C^k \! \left( \left[ 0, \frac{1}{2 }\right] \right)$. Moreover, the function~$u_k \coloneqq w_{M_2(k)}$ just constructed has the regularity claimed in the statement of the theorem. Indeed, if~$s \ne \frac{1}{2}$, then~$q \ge 2$ and it is therefore easy to see that the expansion~\eqref{irrexpan} holds true. If, on the other hand,~$s = \frac{1}{2}$, then~$q = 1$ and we have
$$
u_k(x) = x + \frac{a_{1, 0}^{(1)}}{2} \, x^2 \log x + O \! \left( x^2 \right) \quad \mbox{as } x \rightarrow 0^+,
$$
with~$a_{1, 0}^{(1)} \ne 0$ in view of Lemma~\ref{buildingblocklem}. The proof is thus complete.
\end{proof}

\section*{Acknowledgments}
We thank the anonymous referees for their attentive reading and keen remarks, which substantially contributed to the improvement of the paper.

The research of the authors has been supported by the GNAMPA-INdAM project ``Equazioni nonlocali di tipo misto e geometrico'' (Italy), CUP E53C22001930001. 
NA is also partially supported by the PRIN project 2022R537CS ``$NO^3$ - NOdal Optimization, NOnlinear elliptic equations, NOnlocal geometric problems, with a focus on regularity'' (Italy), while~MC by the PRIN project 20229M52AS\_004 ``Partial differential equations and related geometric-functional inequalities'' (Italy) and by the Spanish grants PID2021-123903NB-I00 and RED2022-134784-T funded by MCIN/AEI/10.13039/501100011033 (Spain) and by ERDF ``A way of making Europe'' (European Union).
The research of EA and NA is also partially supported by the AAP-BQRI Grant ``BORDER'' funded by the University of Rouen Normandie (France).

\begin{bibdiv}
\begin{biblist}

\bib{AC21}{article}{
   author={Abatangelo, Nicola},
   author={Cozzi, Matteo},
   title={An elliptic boundary value problem with fractional nonlinearity},
   journal={SIAM J. Math. Anal.},
   volume={53},
   date={2021},
   number={3},
   pages={3577--3601},
}

\bib{libbro}{book}{
   author={Abatangelo, Nicola},
   author={Dipierro, Serena},
   author={Valdinoci, Enrico},
   title={A gentle introduction to the fractional world},
   series={Unitext},
   volume={176},
   publisher={Springer, Cham},
   year={2025},
   pages={x+257},
}

\bib{MR4038144}{article}{
   author={Abatangelo, Nicola},
   author={Ros-Oton, Xavier},
   title={Obstacle problems for integro-differential operators: higher
   regularity of free boundaries},
   journal={Adv. Math.},
   volume={360},
   date={2020},
   pages={106931, 61 pp},
}

\bib{MR3967804}{article}{
   author={Abatangelo, Nicola},
   author={Valdinoci, Enrico},
   title={Getting acquainted with the fractional Laplacian},
   conference={
      title={Contemporary research in elliptic PDEs and related topics},
   },
   book={
      series={Springer INdAM Ser.},
      volume={33},
      publisher={Springer, Cham},
   },
   isbn={978-3-030-18920-4},
   isbn={978-3-030-18921-1},
   date={2019},
   pages={1--105},
}

\bib{AC23}{article}{
   author={Antonini, Carlo Alberto},
   author={Cozzi, Matteo},
   title={Global gradient regularity and a Hopf lemma for quasilinear
   operators of mixed local-nonlocal type},
   journal={J. Differential Equations},
   volume={425},
   date={2025},
   pages={342--382},
}

\bib{as}{book}{
   author={Abramowitz, Milton},
   author={Stegun, Irene A.},
   title={Handbook of mathematical functions with formulas, graphs, and
   mathematical tables},
   series={National Bureau of Standards Applied Mathematics Series},
   volume={No. 55},
   publisher={U. S. Government Printing Office, Washington, DC},
   date={1964},
   pages={xiv+1046},
}

\bib{MR2735074}{article}{
   author={Barles, Guy},
   author={Chasseigne, Emmanuel},
   author={Imbert, Cyril},
   title={H\"older continuity of solutions of second-order non-linear
   elliptic integro-differential equations},
   journal={J. Eur. Math. Soc. (JEMS)},
   volume={13},
   date={2011},
   number={1},
   pages={1--26},
}

\bib{BDVV22}{article}{
   author={Biagi, Stefano},
   author={Dipierro, Serena},
   author={Valdinoci, Enrico},
   author={Vecchi, Eugenio},
   title={Mixed local and nonlocal elliptic operators: regularity and
   maximum principles},
   journal={Comm. Partial Differential Equations},
   volume={47},
   date={2022},
   number={3},
   pages={585--629},
}

\bib{BDVV23}{article}{
   author={Biagi, Stefano},
   author={Dipierro, Serena},
   author={Valdinoci, Enrico},
   author={Vecchi, Eugenio},
   title={A Faber-Krahn inequality for mixed local and nonlocal operators},
   journal={J. Anal. Math.},
   volume={150},
   date={2023},
   number={2},
   pages={405--448},
}

\bib{BM21}{article}{
   author={Biswas, Anup},
   author={Modasiya, Mitesh},
   title={Mixed local-nonlocal operators: maximum principles, eigenvalue problems and their applications},
   journal={J. Anal. Math.},
   volume={156},
   date={2025},
   number={1},
   pages={47--81},
}

\bib{BMS23}{article}{
   author={Biswas, Anup},
   author={Modasiya, Mitesh},
   author={Sen, Abhrojyoti},
   title={Boundary regularity of mixed local-nonlocal operators and its
   application},
   journal={Ann. Mat. Pura Appl. (4)},
   volume={202},
   date={2023},
   number={2},
   pages={679--710},
}

\bib{MR3469920}{book}{
   author={Bucur, Claudia},
   author={Valdinoci, Enrico},
   title={Nonlocal diffusion and applications},
   series={Lecture Notes of the Unione Matematica Italiana},
   volume={20},
   publisher={Springer, [Cham]; Unione Matematica Italiana, Bologna},
   date={2016},
   pages={xii+155},
}

\bib{MR2944369}{article}{
   author={Di Nezza, Eleonora},
   author={Palatucci, Giampiero},
   author={Valdinoci, Enrico},
   title={Hitchhiker's guide to the fractional Sobolev spaces},
   journal={Bull. Sci. Math.},
   volume={136},
   date={2012},
   number={5},
   pages={521--573},
}

\bib{dyda}{article}{
   author={Dyda, Bart\l omiej},
   title={Fractional calculus for power functions and eigenvalues of the
   fractional Laplacian},
   journal={Fract. Calc. Appl. Anal.},
   volume={15},
   date={2012},
   number={4},
   pages={536--555},
}

\bib{MR3916700}{article}{
   author={Garofalo, Nicola},
   title={Fractional thoughts},
   conference={
      title={New developments in the analysis of nonlocal operators},
   },
   book={
      series={Contemp. Math.},
      volume={723},
      publisher={Amer. Math. Soc., [Providence], RI},
   },
   isbn={978-1-4704-4110-4},
   date={2019},
   pages={135},
}

\bib{GT01}{book}{
   author={Gilbarg, David},
   author={Trudinger, Neil S.},
   title={Elliptic partial differential equations of second order},
   series={Classics in Mathematics},
   note={Reprint of the 1998 edition},
   publisher={Springer-Verlag, Berlin},
   date={2001},
   pages={xiv+517},
}

\bib{L95}{book}{
   author={Lunardi, Alessandra},
   title={Analytic semigroups and optimal regularity in parabolic problems},
   series={Progress in Nonlinear Differential Equations and their
   Applications},
   volume={16},
   publisher={Birkh\"auser Verlag, Basel},
   date={1995},
   pages={xviii+424},
}

\bib{RS14}{article}{
   author={Ros-Oton, Xavier},
   author={Serra, Joaquim},
   title={The Dirichlet problem for the fractional Laplacian: regularity up
   to the boundary},
   journal={J. Math. Pures Appl. (9)},
   volume={101},
   date={2014},
   number={3},
   pages={275--302},
}

\bib{SVWZ22}{article}{
   author={Su, Xifeng},
   author={Valdinoci, Enrico},
   author={Wei, Yuanhong},
   author={Zhang, Jiwen},
   title={Regularity results for solutions of mixed local and nonlocal
   elliptic equations},
   journal={Math. Z.},
   volume={302},
   date={2022},
   number={3},
   pages={1855--1878},
}

\bib{SVWZ23}{article}{
   author={Su, Xifeng},
   author={Valdinoci, Enrico},
   author={Wei, Yuanhong},
   author={Zhang, Jiwen},
   title={Multiple solutions for mixed local and nonlocal elliptic
   equations},
   journal={Math. Z.},
   volume={308},
   date={2024},
   number={3},
   pages={Paper No. 40, 37},
}

\bib{SVWZ25}{article}{
   author={Su, Xifeng},
   author={Valdinoci, Enrico},
   author={Wei, Yuanhong},
   author={Zhang, Jiwen},
   title={On some regularity properties of mixed local and nonlocal elliptic
   equations},
   journal={J. Differential Equations},
   volume={416},
   date={2025},
   pages={576--613},
}

\end{biblist}
\end{bibdiv}

\end{document}